\theoremstyle{plain}
\numberwithin{equation}{section}
\definecolor{ao(english)}{rgb}{0.0, 0.0, 1.0}
\numberwithin{equation}{section}
\newtheorem{theorem}{Theorem}[section]
\newtheorem{conjecture}[theorem]{Conjecture}
\newtheorem{lemma}[theorem]{Lemma}
\newtheorem{remark}[theorem]{Remark}
\newtheorem{defn}[theorem]{Definition}
\title[Arithmetic Properties of Generalized Cubic and Overcubic Partitions]{Arithmetic Properties of Generalized Cubic and Overcubic Partitions}
\author[H. Das]{Hirakjyoti Das\, \orcidlink{0000-0002-5540-5625}}
\address[H Das]{Department of Mathematics, B. Borooah College, Guwahati 781007, Assam, India}
\email{hirak@bborooahcollege.ac.in}
\author[S. Maity]{Saikat Maity}
\address[S. Maity \& M. P. Saikia]{Mathematical and Physical Sciences division, School of Arts and Sciences, Ahmedabad University, Ahmedabad 380009, Gujarat, India}
\email{saikat.maity@ahduni.edu.in \& manjil.saikia@ahduni.edu.in}
\author[M. P. Saikia]{Manjil P. Saikia\,\orcidlink{0000-0002-2997-6731}}
\thanks{Corresponding Author: Manjil P. Saikia (manjil.saikia@ahduni.edu.in)}
\keywords{cubic partitions, overcubic partitions, Ramanujan-type congruences, modular forms, Radu's algorithm.}
\subjclass[2010]{Primary 05A17, 11F03, 11P83}
\begin{document}

\begin{abstract}
We prove several congruences satisfied by the generalized cubic and generalized overcubic partition functions, recently introduced by Amdeberhan, Sellers, and Singh. We also prove infinite families of congruences modulo powers of $2$ and modulo $12$ satisfied by the generalized overcubic partitions, as well as some density results that they satisfy. We use both elementary $q$-series techniques as well as the theory of modular forms to prove our results.
\end{abstract}

\maketitle

\section{Introduction}

An (integer) \textit{partition} of a natural number $n$ is a non-increasing sequence of integers $\lambda_1 \geq \lambda_2\geq \cdots \geq \lambda_k$, such that $\sum\limits_{i=1}^k\lambda_i=n$. We denote by $p(n)$, the number of partitions of $n$. For instance $p(5)=7$ and the seven partitions of $5$ are
\[
(5), \,(4,1), \, (3,2), \, (3,1,1), \, (2,2,1), \, (2,1,1,1), \, (1,1,1,1,1).
\]
It is well-known that their generating function is given by
\[
\sum_{n\geq 0}p(n)q^n=\prod_{n\geq 1}\frac{1}{(1-q^n)}=\frac{1}{(q;q)_\infty}=\frac{1}{f_1}
,\]
where we have used the shorthand notations
\[
(a;q)_\infty:=\prod_{n\geq 0}(1-aq^n), \quad f_k:=(q^k;q^k)_\infty.
\]

The arithmetic properties of partitions have been a fruitful source of mathematical discovery since their inception. Ramanujan gave three intriguing congruences for the partition function, namely he proved that, for all $n\geq 0$, we have
\[
p(5n+4)\equiv 0\pmod 5, \quad p(7n+5)\equiv 0 \pmod7, \quad  p(11n+6)\equiv 0 \pmod{11}.
\]
Since then, mathematicians have been on the lookout for such Ramanujan congruences for other restricted partition functions as well. This has led to a proliferation of research in the theory of partitions by giving rise to numerous generalizations of partitions.

One such generalization was recently studied by Amdeberhan, Sellers, and Singh \cite{AmdeberhanSellersSingh}. They studied generalized cubic partitions and generalized cubic overpartitions. A \textit{generalized cubic partition} of weight $n$ is a partition of $n\geq 1$ wherein each part may appear in $c\geq 1$ different colors. We denote by $a_c(n)$ the number of such generalized cubic partitions of $n$, and set $a_c(0):=1$ for all $c\geq 1$. The generating function for $a_c(n)$ is given by
\begin{equation}\label{gf:ac}
    \sum_{n\geq 0}a_c(n)q^n=\frac{1}{f_1f_2^{c-1}}.
\end{equation}
If we set $c=1$ then we obtain the ordinary integer partitions, and if we set $c=2$ then we obtain the \textit{cubic partitions} of $n$. The case $c=2$ was first studied by Chan \cite{Chan}. Following Chan's work, there has been a lot of follow-up work studying cubic partitions, in particular mathematicians such as Chan \cite{Chan2}, Xiong \cite{Xiong}, Sellers \cite{Sellers}, Gireesh \cite{Gireesh}, Yao \cite{Yao}, Hirschhorn \cite{Hirschhorn}, Merca \cite{Merca}, Baruah \& Das \cite{BaruahDas}, and Buragohain \& Saikia \cite{BuragohainSaikia} have studied arithmetic properties of cubic partitions over the years.

Amdeberhan, Sellers, and Singh \cite{AmdeberhanSellersSingh} also considered \textit{generalized overcubic partitions}. These are defined in the same way as generalized cubic partitions and denoted by $\bar a_c(n)$. The generating function for $\bar a_c(n)$ is given by
\begin{equation} \label{genfn_bar ac}
    \sum_{n\geq 0}\bar a_c(n)q^n=\frac{f_4^{c-1}}{f_1^2f_2^{2c-3}}.
\end{equation}
The case $c=1$ are the ordinary overpartitions first studied by Corteel and Lovejoy \cite{CorteelLovejoy}, while the case $c=2$ was first studied by Kim \cite{Kim}. Following the work of Kim, overcubic partitions and their generalizations have received a lot of attention. Mathematicians such as, Hirschhorn \cite{Hirschhorn2}, Lin \cite{Lin14, Lin17}, Sellers \cite{Sellers}, Lin--Shen--Wang \cite{LinShenWang}, Ray \& Barman \cite{RayBarman}, Nayaka--Dharmendra--Mahesh Kumar \cite{NayakaDharmendraMaheshKumar}, and Saikia \& Sarma \cite{SaikiaSarma} have studied their arithmetic properties, and it is an area of active research.

Amdeberhan, Sellers, and Singh \cite{AmdeberhanSellersSingh} obtained several congruences for the $a_c(n)$ and $\bar a_c(n)$ functions using both elementary and modular forms techniques. Subsequently, Guadalupe \cite{Guadalupe} obtained a few more congruences for the $a_c(n)$ function. The purpose of this paper is to extend their lists and prove several more congruences for both $a_c(n)$ and $\bar a_c(n)$. 

First, we prove the following isolated congruences for $a_c(n)$. 

\begin{theorem}\label{THM1}
    For all $n\geq 0$, we have 
    \begin{align}
        a_{37}(43n+12)&\equiv 0 \pmod{43},\label{37_Th1}\\  
        a_{41}(47n+21)&\equiv 0 \pmod{47},\label{41_Th1}\\ 
        a_{53}(59n+56)&\equiv 0 \pmod{59},\label{53_Th1}\\ 
        a_{61}(67n+19)&\equiv 0 \pmod{67},\label{61_Th1}\\ 
        a_{65}(71n+32)&\equiv 0 \pmod{71},\label{65_Th1}\\ 
        a_{73}(79n+62)&\equiv 0 \pmod{79},\label{73_Th1}\\ 
        a_{77}(83n+79)&\equiv 0 \pmod{83} \label{77_Th1}.
        \end{align}
\end{theorem}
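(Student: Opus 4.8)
The plan is to treat all seven congruences uniformly, exploiting the fact that in every case the number of colours is $c=p-6$ and the residue $r$ satisfies $24r+13\equiv 0\pmod p$ (for instance $24\cdot 12+13=7\cdot 43$, $24\cdot 21+13=11\cdot 47$, and so on). First I would reduce modulo $p$. Since $(1-q^{2n})^p\equiv 1-q^{2np}\pmod p$, we have $f_2^{\,p}\equiv f_{2p}\pmod p$, so from \eqref{gf:ac},
\[
\sum_{n\ge0}a_{p-6}(n)q^n=\frac{1}{f_1 f_2^{\,p-7}}=\frac{f_2^{7}}{f_1 f_2^{\,p}}\equiv\frac{f_2^{7}}{f_1 f_{2p}}\pmod p .
\]
Because $f_{2p}=(q^{2p};q^{2p})_\infty$, the series $1/f_{2p}$ involves only exponents divisible by $2p$, so the coefficient of $q^{pn+r}$ on the right is a $\mathbb Z$-combination of coefficients $[q^m]\bigl(f_2^{7}/f_1\bigr)$ with $m\equiv r\pmod p$. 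Hence it suffices to prove the single statement
\[
[q^m]\frac{f_2^{7}}{f_1}\equiv 0\pmod p\qquad\text{whenever } m\equiv r\pmod p,
\]
and since $\gcd(24,p)=1$ the condition $m\equiv r\pmod p$ is exactly $p\mid(24m+13)$.

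Next I would bring in modular forms. The series $f_2^{7}/f_1$ is, up to the factor $q^{13/24}$, the weight-$3$ eta quotient $g(\tau)=\eta(2\tau)^{7}/\eta(\tau)$; after the substitution $\tau\mapsto 24\tau$ it becomes an honest modular form $\tilde g(\tau)=\eta(48\tau)^{7}/\eta(24\tau)=\sum_m b(m)q^{24m+13}$ on a congruence subgroup, whose Fourier coefficient $c(N)$ at the index $N=24m+13$ equals $[q^m](f_2^{7}/f_1)$. The key structural claim is that $\tilde g$ has complex multiplication by $\mathbb Q(i)$: its nonzero coefficients occur only at indices $N$ in which every prime $\equiv 3\pmod 4$ appears to an even power. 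I would confirm this against the low-order expansion $f_2^{7}/f_1=1+q-5q^2-4q^3+5q^4+11q^6+15q^7-\cdots$, which gives $c(N)\ne0$ at $N=13,37,61,85,109,\dots$ (all prime factors $\equiv1\pmod4$) and $c(N)=0$ at $N=133=7\cdot19$ and $N=253=11\cdot23$, matching the split/inert behaviour in $\mathbb Q(i)$ exactly.

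The congruences then follow from the arithmetic of inert primes. Each of $43,47,59,67,71,79,83$ is $\equiv 3\pmod 4$, hence $\left(\frac{-1}{p}\right)=-1$ and $p$ is inert in $\mathbb Q(i)$. For a CM form of weight $3$ and an inert prime $p$ one has $a_p=0$, and the Hecke recursion $a_{p^{j+1}}=a_p\,a_{p^{j}}-p^{2}\chi(p)\,a_{p^{j-1}}$ forces every coefficient at an index divisible by $p$ to vanish modulo $p$: if $v_p(N)$ is odd the coefficient is exactly $0$, while if $v_p(N)=2j\ge2$ it carries a factor $p^{2}$. Applying this to $N=24m+13$, which is divisible by $p$ precisely when $m\equiv r\pmod p$, yields $[q^m](f_2^{7}/f_1)\equiv0\pmod p$, and feeding this back through the reduction above proves \eqref{37_Th1}--\eqref{77_Th1}. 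In fact the same argument proves the uniform statement $a_{p-6}(pn+r)\equiv0\pmod p$ for every prime $p\equiv3\pmod4$ with $p\ge7$ and $24r+13\equiv0\pmod p$; the degenerate case $p=7$ recovers Ramanujan's $p(7n+5)\equiv0\pmod7$.

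The main obstacle is the structural claim in the second step: rigorously establishing that $\tilde g$ is a CM form for $\mathbb Q(i)$, so that Hecke theory applies despite the $24$-dilation and the nontrivial multiplier. I would discharge it either by exhibiting $f_2^{7}/f_1$ explicitly as a binary theta series $\sum H(x,y)\,q^{Q(x,y)}$ attached to a Hecke Gr\"ossencharacter of $\mathbb Q(i)$ (a degree-$2$ spherical harmonic over a quadratic form of $2$-power discriminant), from which the even-power factorization is transparent, or by identifying $\tilde g$ with the relevant CM newform of the correct level and nebentypus and invoking its established CM. Once the CM property is in hand the remaining steps are routine. As a fallback that sidesteps the CM structure, each individual congruence can instead be certified by Radu's algorithm: the reduction above already expresses the claim as the vanishing of a single coefficient class of a fixed eta quotient, so a Sturm-bound check settles each of the seven cases by a finite computation.
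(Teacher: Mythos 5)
Your reduction step is correct and attractive: in all seven cases $c=p-6$ and $24r+13\equiv 0\pmod p$, so $f_2^{\,p}\equiv f_{2p}\pmod p$ turns \eqref{gf:ac} into $f_2^{7}/(f_1f_{2p})\pmod p$, and since $1/f_{2p}$ involves only exponents divisible by $p$, everything reduces to showing $[q^m](f_2^{7}/f_1)\equiv 0\pmod p$ whenever $p\mid 24m+13$. This is genuinely different from the paper's proof, which treats each congruence separately by applying the Hecke operator $T_p$ to a large eta quotient (for \eqref{37_Th1}, $G_1(z)=\eta(z)^{816}/\eta(2z)^{36}$, a form of weight $390$ on $\Gamma_0(4)$) and verifying coefficients up to the Sturm bound by computer; your reduction is instead the elementary route of the Guadalupe generalization mentioned in the remark following Theorem \ref{THM1}.

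However, the heart of your main argument --- that $\eta(48\tau)^{7}/\eta(24\tau)$ has CM by $\mathbb{Q}(i)$, i.e.\ that its coefficients vanish at every index containing a prime $\equiv 3\pmod 4$ to an odd power --- is asserted, not proved, and this is a genuine gap. Observing $c(N)=0$ at $N=133$ and $N=253$ is consistency, not proof: a cusp form in $M_3\left(\Gamma_0(1152),\left(\tfrac{-2}{\cdot}\right)\right)$ can agree with a CM form to many coefficients without having CM. Neither of your two proposed discharges is carried out, and neither is routine. Because the support lies in $N\equiv 13\pmod{24}$, the form cannot be a single newform (the support is not multiplicatively closed), only a linear combination of CM newforms, their dilations and twists; and the first coefficient $c(13)=1$, against the value $|a(13)|=10$ for the standard weight-$3$ Gr\"ossencharacter of conductor $(1+i)^3$ (the newform $\eta(4\tau)^6$), shows the identification is not the obvious one. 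So producing the explicit theta decomposition, or the newform decomposition at level $1152$, is a real computation that the proposal leaves undone. (The claim does appear to be true --- the coefficients also vanish at $N=469=7\cdot 67$, $517=11\cdot 47$ and $589=19\cdot 31$ --- and granting it, your inert-prime/Hecke-recursion argument is sound and would yield the uniform statement $a_{p-6}(pn+r)\equiv 0\pmod p$ for all primes $p\equiv 3\pmod 4$, which is stronger than the theorem.) Your fallback does close the argument, since the reduced claim about $f_2^{7}/f_1$ on the progression $m\equiv r\pmod p$ is exactly what Radu's algorithm or a Hecke-plus-Sturm computation certifies in finitely many steps; but note that with the fallback your proof is no longer a different proof --- it becomes essentially the paper's own method, applied to a smaller eta quotient.
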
 

\begin{remark}
    After we finished the proof of Theorem \ref{THM1}, Guadalupe \cite{Guadalupe} updated his preprint on arXiv, which actually generalizes the above result. However, he uses elementary techniques to achieve this, so we keep our proof in this article.
\end{remark}

\begin{theorem} \label{THM2}
    For all $n\geq 0$, we have
    \begin{align}
    a_3(7^2n+39)&\equiv 0 \pmod{7^2}.\label{3 thm2}
    \end{align}
\end{theorem}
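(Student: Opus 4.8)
The plan is to treat this lone congruence modulo $7^2$ by the theory of modular forms, via Radu's Ramanujan--Kolberg algorithm, rather than by $q$-series dissection, since a single progression modulo a prime power is precisely the situation that algorithm is designed to settle.

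Setting $c=3$ in \eqref{gf:ac} gives
$$\sum_{n\ge 0}a_3(n)q^n=\frac{1}{f_1f_2^2}=q^{5/24}\,\frac{1}{\eta(\tau)\eta(2\tau)^2},$$
so the generating function is, up to the rational $q$-power, an eta-quotient of level $2$ with exponent vector $(r_1,r_2)=(-1,-2)$. Writing $A_{49,39}(q):=\sum_{n\ge 0}a_3(49n+39)q^n$, the claim is the power-series congruence $A_{49,39}(q)\equiv 0\pmod{7^2}$. The first step is to feed the data $(m,t)=(49,39)$ and the exponent vector $(-1,-2)$ into Radu's algorithm, which isolates the arithmetic progression by means of the $U_{49}$ operator and searches for an auxiliary eta-quotient $g(q)=\prod_{\delta\mid N}f_\delta^{s_\delta}$ (a unit in $\mathbb{Z}[[q]]$ with constant term $1$) together with a level $N$, a weight $k$, and a character $\chi$ for which
$$F(\tau):=g(q)\,A_{49,39}(q)\in M_k(\Gamma_0(N),\chi)$$
is a \emph{holomorphic} modular form.

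The two things that must be checked to certify the output are standard. First, that $F$ transforms correctly under $\Gamma_0(N)$: this reduces to the Ligozat/Newman divisibility conditions on the exponents $s_\delta$ together with the exponents coming from the $U_{49}$-image, which are purely arithmetic constraints. Second, and more delicately, that $F$ is holomorphic at every cusp of $\Gamma_0(N)$: for this I would compute the order of $F$ at each cusp using the standard order-at-cusp formula for eta-quotients and verify that all of these orders are nonnegative, adjusting $g$ if necessary. Once $F$ is known to be a holomorphic modular form with integer Fourier coefficients, the congruence follows from a finite computation: since $g$ is invertible over $\mathbb{Z}[[q]]$, we have $A_{49,39}\equiv 0\pmod{7^2}$ if and only if $F\equiv 0\pmod{7^2}$, and by Sturm's theorem --- whose bound $\tfrac{k}{12}[\mathrm{SL}_2(\mathbb{Z}):\Gamma_0(N)]$ depends only on the weight and index and is valid for an arbitrary modulus, prime power or not --- it is enough to check that the coefficients of $F$ up to this bound all vanish modulo $49$.

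The main obstacle I expect is the cusp analysis: one must choose the multiplier $g$ so that $F$ has nonnegative order at \emph{all} cusps while keeping the level $N$, and hence the Sturm bound, small enough that the final verification modulo $49$ remains a manageable computation. Securing holomorphicity is the crux; after that, the fact that the modulus is the prime power $7^2$ rather than a prime introduces no additional difficulty, since Sturm's bound is insensitive to the modulus and only the integrality of the Fourier coefficients is used.
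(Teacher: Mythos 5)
Your proposal is correct and follows essentially the same route as the paper: the authors also prove \eqref{3 thm2} via Radu's machinery, choosing $(m,M,N,t,(r_\delta))=(49,14,14,39,(48,-2,-7,0))$, verifying the $\Delta^{*}$ conditions, picking an auxiliary exponent vector $(r'_\delta)=(18,0,0,0)$ so that $p(\gamma_\delta)+p'(\gamma_\delta)\geq 0$ at all cusps, and checking the congruence up to the bound $\lfloor\nu\rfloor=57$ by computer. The one device you omit is the paper's preliminary replacement of $\dfrac{1}{f_1f_2^2}$ by the congruent eta-quotient $\dfrac{f_1^{48}}{f_2^2f_7^7}\pmod{49}$ (via $f_1^{49}\equiv f_7^7\pmod{49}$), which brings the prime $7$ into the quotient and keeps the cusp orders and the verification bound small; your direct use of the exponent vector $(-1,-2)$ still works (and your plan is sound), but it produces much more negative cusp orders, hence a larger multiplier $g$ and a larger Sturm-type bound, and you should also note that Radu's lemma requires the congruence to be verified for every residue in the orbit $P(39)$, which happens to be the singleton $\{39\}$ here.
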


Theorems \ref{THM1} and \ref{THM2} are proved using the theory of modular forms in Section \ref{sec:isolated}.

We also give a complete characterization of the $\bar a_c(n)$ function modulo $4$ in the following results.

\begin{theorem}\label{bar a_c (n) equiv 4}
For all $n \geq 1$, we have
$$
\bar a_c(n) \equiv
\begin{cases} 
2 \pmod{4} & \text{if } n = k^2,  \, k \in \mathbb{Z}, \\ 
2(c+1) \pmod{4} & \text{if } n = 2k^2,  \, k \in \mathbb{Z}, \\ 
0 \pmod 4 & \text{otherwise}.
\end{cases}
$$
\end{theorem}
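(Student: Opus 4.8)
The plan is to reduce the generating function \eqref{genfn_bar ac} modulo $4$ by writing it in terms of the classical theta function $\varphi(-q)=\sum_{n=-\infty}^{\infty}(-1)^n q^{n^2}=f_1^2/f_2$. First I would factor out the overpartition generating function, rewriting
\[
\sum_{n\geq 0}\bar a_c(n)q^n=\frac{f_4^{c-1}}{f_1^2 f_2^{2c-3}}=\frac{f_2}{f_1^2}\left(\frac{f_4}{f_2^2}\right)^{c-1}=\frac{1}{\varphi(-q)}\cdot\frac{1}{\varphi(-q^2)^{c-1}},
\]
which is valid for every $c\geq 1$ (the second factor being the empty product when $c=1$), using $\varphi(-q^2)=f_2^2/f_4$. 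This isolates a genuine overpartition factor $1/\varphi(-q)$ and a $2$-dilated copy $1/\varphi(-q^2)^{c-1}$, and it is the key structural observation behind the result.

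Next I would reduce each factor modulo $4$. Since $\varphi(-q)=1+2\sum_{k\geq 1}(-1)^k q^{k^2}$, the elementary congruence $(1+2x)^{-1}\equiv 1-2x\pmod 4$ (valid because $(2x)^2\equiv 0$) gives
\[
\frac{1}{\varphi(-q)}\equiv 1-2\sum_{k\geq 1}(-1)^k q^{k^2}\pmod 4.
\]
For the dilated factor, expanding $\varphi(-q^2)^{c-1}=\bigl(1+2\sum_{k\geq 1}(-1)^k q^{2k^2}\bigr)^{c-1}\equiv 1+2(c-1)\sum_{k\geq 1}(-1)^k q^{2k^2}\pmod 4$ (the higher binomial terms carrying a factor $4$) and inverting in the same way yields
\[
\frac{1}{\varphi(-q^2)^{c-1}}\equiv 1-2(c-1)\sum_{k\geq 1}(-1)^k q^{2k^2}\pmod 4.
\]

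Finally I would multiply these two congruences; the product of the two $2$-divisible sums contributes a multiple of $4$ and drops out, leaving
\[
\sum_{n\geq 0}\bar a_c(n)q^n\equiv 1-2\sum_{k\geq 1}(-1)^k q^{k^2}-2(c-1)\sum_{k\geq 1}(-1)^k q^{2k^2}\pmod 4.
\]
Reading off the coefficient of $q^n$ for $n\geq 1$ then finishes the proof: if $n=k^2$ one gets $-2(-1)^k\equiv 2\pmod 4$; if $n=2k^2$ one gets $-2(c-1)(-1)^k\equiv 2(c-1)\equiv 2(c+1)\pmod 4$; and otherwise the coefficient is $0$. Here I use that $2(-1)^k\equiv 2\pmod 4$ to clear the sign of $(-1)^k$, and that $2(c-1)\equiv 2(c+1)\pmod 4$. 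The only point that needs a word of justification is that the two exceptional sets are disjoint for $n\geq 1$: a positive integer cannot simultaneously be a perfect square and twice a perfect square, so the three cases are genuinely mutually exclusive and no coefficients overlap. I expect no serious obstacle here; essentially the whole difficulty is concentrated in spotting the factorization in the first display, after which the argument is routine bookkeeping modulo $4$.
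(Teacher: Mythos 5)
Your proposal is correct, but it follows a genuinely different route from the paper's proof. The paper first iterates the functional equation $\bar F_c(q) = \varphi(q)\varphi(q^2)^{c-1} \bar F_c(q^2)^2$ indefinitely to obtain the infinite product $\sum_{n\geq 0}\bar a_c(n)q^n = \varphi(q)\prod_{i\geq 1}\varphi(q^{2^i})^{(c+1)2^{i-1}}$ (Lemma \ref{theorem for overline F_(c-1)(q)}), then notes that every factor with $i\geq 2$ is $\equiv 1 \pmod 4$ by the binomial theorem, leaving $\varphi(q)\varphi(q^2)^{c+1} \pmod 4$; since only positive powers of $\varphi$ appear, no inversion of power series is ever needed. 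You instead factor the generating function directly as $1/\bigl(\varphi(-q)\varphi(-q^2)^{c-1}\bigr)$ and kill the reciprocals modulo $4$ with the inversion $(1+2x)^{-1}\equiv 1-2x \pmod 4$. Both decompositions are valid (they are two sides of the same coin, linked by the classical identity $\varphi(q)\varphi(-q)=\varphi(-q^2)^2$, which is exactly what drives the paper's iteration), and your final congruence $1-2\sum_{k\geq 1}(-1)^k q^{k^2}-2(c-1)\sum_{k\geq 1}(-1)^k q^{2k^2}$ matches the paper's $1+2\sum_{k\geq 1}q^{k^2}+2(c+1)\sum_{k\geq 1}q^{2k^2}$ because $-2(-1)^k\equiv 2$ and $2(c-1)\equiv 2(c+1) \pmod 4$. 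Your route is a bit shorter and avoids the formal infinite iteration; what the paper's route buys is reusability: the same product lemma is applied verbatim (discarding only the factors with $i\geq 3$) to prove the companion mod-$8$ characterization, Theorem \ref{bar a_c (n) equiv 8}, whereas your inversion step would there need the next-order expansion $(1+2x)^{-1}\equiv 1-2x+4x^2 \pmod 8$ and correspondingly messier bookkeeping. Your explicit check that the two exceptional sets are disjoint (a positive integer cannot be both a square and twice a square) is a point the paper leaves implicit, and it is needed in both arguments.
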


\begin{remark}
    Theorem \ref{bar a_c (n) equiv 4} generalizes a result of Sellers \cite[Theorem 2.5]{Sellers}.
\end{remark}

\noindent Theorem \ref{bar a_c (n) equiv 4} is proved in Section \ref{sec:character} using elementary techniques.

We now move on to prove a few infinite family of congruences.
\begin{theorem}\label{Thm a_c 8n+5/7}
    For all $n\geq 0$, $i\geq 1$ and $k\geq 3$, we have
    \begin{align}
   \label{Cong a_c 8n+5}\bar a_{2^ki-2^{k-1}-2}(8n+5)&\equiv 0 \pmod{2^{k+1}},\\
    \label{Cong a_c 8n+7}\bar a_{2^ki-2^{k-1}-2}(8n+7)&\equiv 0 \pmod{2^{k+2}}.
    \end{align}
\end{theorem}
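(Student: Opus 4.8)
The plan is to recast the generating function \eqref{genfn_bar ac} in terms of the classical theta function $\varphi(q)=\sum_{n\in\mathbb{Z}}q^{n^2}$ and then read off both congruences from a single $2$-adic expansion. Writing $c=2^{k-1}(2i-1)-2$ and using the product forms $\varphi(q)=f_2^5/(f_1^2f_4^2)$ and $\varphi(-q^2)=f_2^2/f_4$, a direct comparison of exponents in \eqref{genfn_bar ac} gives
\[
\sum_{n\geq 0}\bar a_c(n)q^n=\varphi(q)\,\varphi(-q^2)^{\,1-2^{k-1}(2i-1)}.
\]
First I would set $\varphi(-q^2)=1+2P$ with $P=\sum_{b\geq 1}(-1)^bq^{2b^2}$, and note that the $2$-adic valuations of $\binom{2^{k-1}}{j}(2P)^j$ force $\varphi(-q^2)^{2^{k-1}}=1+2^kC$ for some integer power series $C$. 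Putting $m=2i-1$ (odd) and expanding $(1+2^kC)^{-m}\equiv 1-m2^kC\pmod{2^{k+2}}$ (valid for $k\geq 2$) then yields
\[
\sum_{n\geq 0}\bar a_c(n)q^n\equiv \varphi(q)\varphi(-q^2)-m\,2^k\,\varphi(q)\varphi(-q^2)\,C\pmod{2^{k+2}}.
\]

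The crucial structural input is that the leading term carries no mass on the two target progressions. Since $\varphi(q)\varphi(-q^2)=\sum_{a,b\in\mathbb{Z}}(-1)^bq^{a^2+2b^2}$ and the residues of $a^2+2b^2$ modulo $8$ lie in $\{0,1,2,3,4,6\}$ (never $5$ or $7$), the coefficients $[q^{8n+5}]$ and $[q^{8n+7}]$ of $\varphi(q)\varphi(-q^2)$ both vanish. Hence the entire contribution to $\bar a_c(8n+5)$ and $\bar a_c(8n+7)$ comes from $-m2^k\varphi(q)\varphi(-q^2)C$, and since $m$ is odd the two claims \eqref{Cong a_c 8n+5} and \eqref{Cong a_c 8n+7} reduce, respectively, to
\[
[q^{8n+5}]\bigl(\varphi(q)\varphi(-q^2)C\bigr)\equiv 0\pmod 2,\qquad [q^{8n+7}]\bigl(\varphi(q)\varphi(-q^2)C\bigr)\equiv 0\pmod 4.
\]
Crucially these conditions no longer involve $i$, so it remains only to pin down $C$ to low $2$-adic precision.

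Next I would compute $C$ modulo $4$ by expanding $(1+2P)^{2^{k-1}}=\sum_j\binom{2^{k-1}}{j}(2P)^j$ and invoking Kummer's theorem, $v_2\binom{2^{k-1}}{j}=(k-1)-v_2(j)$, so the $j$-th term has $2$-adic valuation $(k-1)+\bigl(j-v_2(j)\bigr)$. Only $j\in\{1,2,4\}$ survive modulo $2^{k+2}$ (the $j=3$ term lands exactly at $2^{k+2}$), and this is precisely where the hypothesis $k\geq 3$ is used: it is needed to discard the $2^{2k-1}P^2$ piece arising from $j=2$. This gives $C\equiv P-P^2+2P^4\pmod 4$, and in particular $C\equiv P+P^2\pmod 2$, uniformly in $k\geq 3$.

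Finally I would substitute these into the two reduced conditions. Modulo $2$ one has $\varphi(q)\equiv\varphi(-q^2)\equiv 1$, so $\varphi(q)\varphi(-q^2)C\equiv P+P^2$, which has only even exponents and hence vanishes at $q^{8n+5}$. Modulo $4$, writing $\varphi(q)=1+2A$ with $A=\sum_{a\geq 1}q^{a^2}$ gives $\varphi(q)\varphi(-q^2)\equiv 1+2A+2P$, and multiplying by $C\equiv P-P^2+2P^4$ produces only terms whose exponents are either even or of the shape $a^2+2(\cdots)$; the same mod-$8$ residue restriction ($a^2+2(\cdots)\not\equiv 7$) annihilates every such term at $q^{8n+7}$. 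This establishes the two reduced conditions, and hence the theorem. The main obstacle is the valuation bookkeeping for $C$ modulo $4$ together with the several mod-$8$ residue checks; the remaining manipulations are routine.
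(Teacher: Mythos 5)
Your proposal is correct and complete in its essentials, but it follows a genuinely different route from the paper's proof. I verified the key steps: the identity $\sum_{n\geq 0}\bar a_c(n)q^n=\varphi(q)\,\varphi(-q^2)^{-(c+1)}$ (immediate from $\varphi(q)=f_2^5/(f_1^2f_4^2)$ and $\varphi(-q^2)=f_2^2/f_4$), the valuation bookkeeping giving $\varphi(-q^2)^{2^{k-1}}=1+2^kC$ with $C\equiv P-P^2+2P^4\pmod{4}$ for $k\geq 3$ (the hypothesis $k\geq 3$ enters exactly where you say, in making the $j=2$ coefficient $2^{k-1}-1\equiv -1\pmod 4$ rather than leaving a stray $2P^2$), and the representation facts $a^2+2b^2\not\equiv 5,7\pmod 8$ and $a^2+2b_1^2+2b_2^2\not\equiv 7\pmod 8$, which indeed close both congruences. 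The paper argues instead by iterated dissection: starting from Lemma \ref{bar a_c (2n) and bar a_c (2n+1)}, it reduces the generating function of $\bar a_c(2n+1)$ modulo $2^{k+2}$ to $2f_1^{2^k+2}f_8^2/(f_2^{2^{k-1}+1}f_4)$ via binomial congruences, applies the $2$-dissection \eqref{2d f1^2} of $f_1^2$, kills all but the $r=0$ terms using $2^{2r}\binom{2^{k-1}+1}{2r}\equiv 2^{2r}\binom{2^{k-1}+1}{2r+1}\equiv 0\pmod{2^k}$ for $r\geq 1$, and then observes that the surviving generating functions for the progressions $4n+1$ and $4n+3$ are series in $q^2$, so their odd-indexed coefficients vanish modulo $2^{k+1}$ and $2^{k+2}$ respectively. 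Both proofs thus rest on $2$-adic valuations of binomial coefficients, but the finishing mechanisms differ: ``an even series has no odd coefficients'' in the paper versus ``the quadratic forms $x^2+2y^2$ and $x^2+2y^2+2z^2$ miss the residues $5,7$ and $7$ modulo $8$'' in yours. Your route buys conceptual transparency: it explains why the progressions $8n+5$ and $8n+7$ appear and why the modulus gains a factor of $2$ on the second one, it avoids dissection formulas altogether, and it reduces both claims to conditions independent of $i$; it is also closer in spirit to the paper's own proofs of Theorems \ref{bar a_c (n) equiv 4} and \ref{bar a_c (n) equiv 8}, whereas the paper's proof of this theorem stays within the dissection framework used throughout Section \ref{sec:infinite}.
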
 

\begin{theorem}\label{thm:nn}
    For all $n\geq 0$, $k\geq 1$ and $i\geq 0$, we have
    \begin{align}
\label{thm -3 8n+7}\bar a_{2^ki+\frac{2^k-6}{2}}(8n+7)&\equiv 0 \pmod{2^{k+3}},\\
\label{thm -2 8n+7}\bar a_{2^ki+\frac{2^k-2}{2}}(8n+7)&\equiv 0 \pmod{2^{k+3}}.
    \end{align}
\end{theorem}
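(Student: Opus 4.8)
The plan is to collapse both congruences \eqref{thm -3 8n+7} and \eqref{thm -2 8n+7} into a single statement about the coefficients of one eta-quotient in a fixed residue class, and then to control its $2$-adic valuation (write $v_2(\cdot)$ for the $2$-adic valuation, $\varphi(q)=\sum_{n\in\mathbb Z}q^{n^2}$, $\psi(q)=\sum_{n\ge0}q^{n(n+1)/2}$, so $\varphi(-q)=f_1^2/f_2$ and $\psi(q)=f_2^2/f_1$). First I would factor the generating function \eqref{genfn_bar ac} as $\sum_{n\ge0}\bar a_c(n)q^n=\frac{f_2}{f_1^2}\big(\frac{f_4}{f_2^2}\big)^{c-1}$, whose second factor is a series in $q^2$; hence the odd part is governed by the odd part of $\frac{f_2}{f_1^2}=\varphi(-q)^{-1}$. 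Using the $2$-dissection $\varphi(-q)=\varphi(q^4)-2q\psi(q^8)$ together with $\varphi(q)\varphi(-q)=f_2^4/f_4^2$, the odd part of $\frac{f_2}{f_1^2}$ is $\frac{2q\,f_4^2 f_{16}^2}{f_2^4 f_8}$; multiplying by $\big(\frac{f_4}{f_2^2}\big)^{c-1}$ and substituting $q^2\mapsto q$ yields
\[
\sum_{m\ge0}\bar a_c(2m+1)\,q^m \;=\; 2\,\psi(q^4)\left(\frac{f_2}{f_1^2}\right)^{c+1}.
\]
Since $8n+7=2(4n+3)+1$, both claims reduce to the single assertion that $[q^{4n+3}]\big(\psi(q^4)\,\varphi(-q)^{-(c+1)}\big)\equiv 0\pmod{2^{k+2}}$.

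For $k\ge 2$ both exponents are even: the congruence \eqref{thm -2 8n+7} has $c+1=2^{k-1}(2i+1)=:2E$ with $E=2^{k-2}(2i+1)$, and \eqref{thm -3 8n+7} has $c+1=2^{k-1}(2i+1)-2=:2E$ with $E=2^{k-2}(2i+1)-1$. Next I would invoke the theta identity $\varphi(-q)^2=\varphi(q^2)^2-4q\psi(q^4)^2$ to write $\varphi(-q)^{-(c+1)}=\big(\varphi(q^2)^2-4q\psi(q^4)^2\big)^{-E}$ and expand binomially in powers of $4q\psi(q^4)^2$, obtaining $\varphi(q^2)^{-2E}\sum_{j\ge0}\binom{E+j-1}{j}4^j q^j\psi(q^4)^{2j}\varphi(q^2)^{-2j}$. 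Multiplying by $\psi(q^4)$ and extracting $[q^{4n+3}]$, a parity check on exponents (both $\psi(q^4)^{2j+1}$ and $\varphi(q^2)^{-2(E+j)}$ are series in $q^2$, the former in $q^4$) shows only odd $j$ survive, each carrying $4^j=2^{2j}$, and for $j\equiv1\pmod 4$ only odd powers $b\ge 1$ of $\varphi(q^2)^{-2(E+j)}$ occur while for $j\equiv3\pmod 4$ the constant term $b=0$ may appear.

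The heart of the matter is the lemma that every positive-degree coefficient of $\varphi(q^2)^{-2M}$ is divisible by $2^{\,2+v_2(M)}$, i.e.\ $\varphi(q^2)^{-2M}\equiv 1\pmod{2^{\,2+v_2(M)}}$ as a formal power series. I would prove it from $\varphi(q^2)=1+2\sum_{l\ge1}q^{2l^2}$ by expanding $(1+2D)^{-2M}=\sum_s\binom{-2M}{s}2^sD^s$ and using $\binom{2M+s-1}{s}=\frac{2M}{s}\binom{2M+s-1}{s-1}$ together with $s-v_2(s)\ge 1$ to bound each coefficient's valuation below by $2+v_2(M)$. Applying this with $M=E+j$ (the governing term being $j=1$, where the $\varphi(q^2)$-exponent is $-2(E+1)$ and only $b\ge1$ occur), the decisive point is a \emph{conservation of $2$-power}: the $j=1$ term has valuation at least $v_2(E)+2+\big(2+v_2(E+1)\big)$, where $v_2(E)$ comes from $\binom{E}{1}=E$, the $2$ from $4^1$, and $2+v_2(E+1)$ from the lemma. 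Since $2^{k-2}(2i+1)\in\{E,E+1\}$ in both families, $\max\big(v_2(E),v_2(E+1)\big)\ge k-2$, so this valuation is at least $(k-2)+4=k+2$, exactly the target. I expect the main obstacle to be precisely this $2$-adic bookkeeping: verifying that $j=1$ is the tightest term, that the $j\equiv3\pmod4$ terms (where $b=0$ gives no gain from the lemma) are nevertheless handled by $4^j=2^{2j}$ with $j\ge3$ together with $v_2\binom{E+j-1}{j}$, and that the estimate stays uniform in $j$ for large $k$.

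Finally, the case $k=1$ — asserting $\bar a_c(8n+7)\equiv 0\pmod{16}$ for even $c$, thereby strengthening \eqref{8n+7} — falls outside the above since $c+1$ is then odd. Here I would peel off one factor $\frac{f_2}{f_1^2}=\varphi(-q)^{-1}$, write $\varphi(-q)^{-(c+1)}=\varphi(-q)^{-1}\big(\varphi(q^2)^2-4q\psi(q^4)^2\big)^{-c/2}$, and run the same extraction, the residual overpartition factor being controlled by $\frac{f_2}{f_1^2}\equiv 1\pmod 2$ and the dissection above; alternatively one may reuse the apparatus already developed for Theorem~\ref{inf}.
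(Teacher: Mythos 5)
Your reduction, via the exact identity $\sum_{m\ge0}\bar a_c(2m+1)q^m = 2\psi(q^4)\varphi(-q)^{-(c+1)}$ (this is the paper's Lemma \ref{bar a_c (2n) and bar a_c (2n+1)}), to the claim $[q^{4n+3}]\bigl(\psi(q^4)\varphi(-q)^{-(c+1)}\bigr)\equiv 0\pmod{2^{k+2}}$ is correct, your valuation lemma for $\varphi(q^2)^{-2M}$ is correct, and your $j=1$ computation does land exactly on $k+2$. But the proof is not complete, and the incompleteness sits exactly where you flag ``the main obstacle'': you must show that \emph{every} odd $j\ge3$ in the infinite negative-binomial expansion contributes with valuation at least $k+2$, uniformly in $j$ and $k$, and this is genuinely nontrivial rather than routine. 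For the family \eqref{thm -3 8n+7}, where $E+1=2^{k-2}(2i+1)$ (so $E$ is odd for $k\ge3$), one computes for odd $j$ with $j-2<2^{k-2}$ that $v_2\binom{E+j-1}{j}=(k-2)-v_2(j-1)$, which drops arbitrarily far below $k-2$ along $j=2^m+1$; the estimate survives only because the lost $m$ is recovered from $v_2(E+j)=m$ via your lemma together with $4^j$ (giving total $k+2+2^{m+1}$), while for $j\equiv3\pmod4$, where $b=0$ yields no lemma gain, one needs $v_2(j-1)=1$ so that $(k-3)+2j\ge k+3$; and the tail $j\ge 2^{k-2}$ must be treated separately using $2j\ge 2^{k-1}\ge k+2$ (valid for $k\ge4$, with $k\le3$ checked directly). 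None of this Kummer-type bookkeeping is in your write-up, so ``$j=1$ is the tightest term'' is currently an assertion, not a proof. The $k=1$ case is a second gap: there $c+1$ is odd and the target is $2^4$, so controlling the peeled-off factor $f_2/f_1^2$ merely modulo $2$ cannot produce a mod-$16$ conclusion; you would need its exact dissection $\varphi(-q)^{-1}=\bigl(\varphi(q^4)+2q\psi(q^8)\bigr)\varphi(-q^2)^{-2}$ and a re-run of the whole extraction with this third factor present.

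It is worth seeing how the paper sidesteps both problems with a single move. Instead of expanding the negative power, it pads the denominator: $\varphi(-q)^{-(c+1)}=\varphi(-q)^{2^{k-1}(6i+7)+2}\big/\varphi(-q)^{2^{k+2}(i+1)}$, so the denominator's exponent is divisible by $2^{k+2}$; then (using the prefactor $2$ of the generating function) the denominator may be replaced modulo the target by $\varphi(-q^2)^{2^{k+1}(i+1)}$, a series in $q^2$. What remains upstairs is a \emph{finite positive} power of $\varphi(-q)$, whose $2$-dissection yields finitely many terms with coefficients $2^{2r}\binom{2^{k-1}(6i+7)+2}{2r+1}\equiv0\pmod{2^{k+1}}$ for $r\ge1$; the surviving $r=0$ term is a series in $q^2$, so extracting odd powers kills it identically --- uniformly in $k\ge1$, with no infinite-series estimates and no separate $k=1$ argument. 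Your route can in principle be completed along the lines indicated above, but the uniform $2$-adic analysis it demands is precisely what the paper's padding step makes unnecessary.
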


We also prove a family of congruences modulo $12$.
\begin{theorem}\label{thm:3cong}
    For all $n\geq 0$ and $i\geq 1$, we have
    \begin{align}
        \bar a_{9i}(9n+6)&\equiv 0 \pmod {12},\\
        \bar a_{9i+4}(9n+6)&\equiv 0 \pmod{12},\\
        \bar a_{9i+8}(9n+6)&\equiv 0 \pmod {12}.
    \end{align}
\end{theorem}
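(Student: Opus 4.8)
The plan is to prove the congruence modulo $12$ by splitting it, via the Chinese Remainder Theorem, into a statement modulo $4$ and a statement modulo $3$.

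The modulus $4$ part I would dispatch immediately using Theorem \ref{bar a_c (n) equiv 4}, which already gives a complete description of $\bar a_c(n) \bmod 4$: one has $\bar a_c(m)\equiv 0 \pmod 4$ whenever $m$ is neither a perfect square nor twice a perfect square. Thus it suffices to observe that no integer of the form $9n+6$ has either shape. Indeed, the squares modulo $9$ are exactly $\{0,1,4,7\}$, and twice a square is congruent to one of $\{0,2,5,8\}\pmod 9$; since $9n+6\equiv 6 \pmod 9$ and $6$ lies in neither set, the integer $9n+6$ is never a square nor twice a square. Hence $\bar a_c(9n+6)\equiv 0 \pmod 4$ for \emph{every} $c\ge 1$ and $n\ge 0$, which in particular covers the three families in the statement.

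The substantial part is the modulus $3$. Starting from \eqref{genfn_bar ac} and using the Frobenius-type congruence $f_k^3\equiv f_{3k}\pmod 3$, I would first reduce
\[
\sum_{n\ge 0}\bar a_c(n)q^n=\frac{f_4^{c-1}}{f_1^2 f_2^{2c-3}}\equiv \frac{f_1 f_6}{f_3 f_4}\left(\frac{f_4}{f_2^2}\right)^{c}\pmod 3 .
\]
Writing $c=3d+e$ and then $d=3d'+d''$ with $e,d''\in\{0,1,2\}$, repeated use of $f_k^3\equiv f_{3k}$ peels off a factor $(f_{36}/f_{18}^2)^{d'}$, which is a power series in $q^{18}$ (so supported on exponents $\equiv 0\pmod 9$), and leaves a fixed ``core'' eta-quotient depending only on $d''$ and $e$. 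The three residue families in the statement are exactly those with $c\equiv 0,4,8\pmod 9$, which is precisely the single condition $d''=e$; under it the core becomes
\[
C_e:=\frac{f_1 f_6}{f_3 f_4}\left(\frac{f_4 f_{12}}{f_2^2 f_6^2}\right)^{e},\qquad e\in\{0,1,2\}.
\]
Since the peeled factor contributes only exponents divisible by $9$, it then suffices to show that for each $e\in\{0,1,2\}$ every coefficient of $C_e$ at an exponent $\equiv 6\pmod 9$ vanishes modulo $3$.

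To finish, I would extract the relevant progression from each $C_e$ using the standard $3$-dissections of $f_1,f_2,f_4$ and their reciprocals (equivalently of the theta functions $\varphi$ and $\psi$), iterated once to reach a $9$-dissection, and verify that the component supported on exponents $\equiv 6\pmod 9$ is identically $\equiv 0\pmod 3$. I expect this final step to be the main obstacle: there are three distinct cores to treat, and assembling the correct $9$-dissection of each eta-quotient---in particular controlling the factor $1/f_4$, whose exponents run through all residues modulo $3$---is where the bookkeeping is heaviest. Combining the vanishing modulo $3$ with the vanishing modulo $4$ from Theorem \ref{bar a_c (n) equiv 4} then yields $\bar a_c(9n+6)\equiv 0\pmod{12}$ for the three stated families, completing the proof.
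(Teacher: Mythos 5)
Your mod-$4$ half is complete, correct, and genuinely different from the paper's: the paper re-derives the mod-$4$ vanishing through an explicit computation with $\varphi(-q)$ and cube roots of unity, iterated to land on a series in $q^3$, whereas you get it for free from Theorem \ref{bar a_c (n) equiv 4} plus the observation that squares lie in $\{0,1,4,7\}$ and doubled squares in $\{0,2,5,8\}$ modulo $9$, so $9n+6$ is never of either shape. This is cleaner than the paper's route, and it proves $\bar a_c(9n+6)\equiv 0 \pmod 4$ for \emph{all} $c\geq 1$, not just the three stated families.

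The mod-$3$ half, however, has a genuine gap. Your reductions are sound: the congruence $\sum_{n\geq 0}\bar a_c(n)q^n\equiv \frac{f_1f_6}{f_3f_4}\left(\frac{f_4}{f_2^2}\right)^{c} \pmod 3$ is correct, the peeled factor $\left(\frac{f_{36}}{f_{18}^2}\right)^{d'}$ is indeed supported on exponents divisible by $9$, and $c\equiv 0,4,8\pmod 9$ is equivalent to $d''=e$. But the assertion that each core $C_e$ has all coefficients on exponents $\equiv 6\pmod 9$ divisible by $3$ is precisely the substance of the theorem after these (routine) manipulations, and you do not prove it --- you defer it as ``the main obstacle.'' As written, this is a plan, not a proof. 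For what it is worth, the step does go through, and without any heavy $9$-dissection: since $6\equiv 0\pmod 3$, you only need the $q^{3n}$-component of each core, and in every case that component reduces modulo $3$ to a series in $q^3$ times either $\frac{f_2^2}{f_1}$ or $\frac{f_1f_4}{f_2}$, whose $3$-dissections \eqref{3d f2^2/f1} and \eqref{3d f1f4/f2} contain no $q^{3n+2}$ terms. Concretely: for $e=0$ apply \eqref{3d f1/f2} to $f_1/f_4$, extract $q^{3n}$, and reduce modulo $3$ to $\frac{f_3f_6}{f_{12}}\cdot\frac{f_2^2}{f_1}$ (this is exactly the paper's argument for $\bar a_{9i}$); for $e=1$ write $C_1=\frac{f_{12}}{f_3f_6}\cdot\frac{f_1}{f_2^2}$, apply \eqref{3d f1/f2^2}, extract $q^{3n}$, and reduce modulo $3$ to $\frac{f_9}{f_6^2}\cdot\frac{f_1f_4}{f_2}$; for $e=2$ note $C_2\equiv\frac{f_{12}^2}{f_3f_6^4}\cdot\frac{f_1f_4}{f_2}\pmod 3$, apply \eqref{3d f1f4/f2}, extract $q^{3n}$, and the result $\frac{f_6^3}{f_3^2 f_{12}}\equiv\frac{f_{18}}{f_3^2f_{12}}\pmod 3$ is already a series in $q^3$. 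Supplying these three short computations, which use only the paper's Lemma \ref{lemma_1overf1squared}, would close the gap; note that the paper itself writes out only the first family in detail and declares the other two ``similar,'' so your completed argument would in fact be more explicit than the published one.
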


Theorems \ref{Thm a_c 8n+5/7}, \ref{thm:nn}, and \ref{thm:3cong} are proved in Section \ref{sec:infinite} using elementary techniques.

Along with the study of Ramanujan-type congruences, the study of distribution of the coefficients of a formal power series modulo $M$ is also an interesting problem to explore. Given an integral power series $A(q) := \displaystyle\sum_{n=0}^{\infty}a(n)q^n $ and $0 \leq r \leq M$, the arithmetic density $\delta_r(A,M;X)$ is defined as
\begin{equation*}
       \delta_r(A,M;X) = \frac{\#\{ n \leq X : a(n) \equiv r \pmod{M} \}}{X}.
  \end{equation*}
An integral power series $A$ is called \textit{lacunary modulo $M$} if
\begin{equation*}
   \lim_{X \to \infty} \delta_0(A,M;X)=1,
\end{equation*}
which means that almost all the coefficients of $A$ are divisible by $M$. It turns out that the partition function $\bar a_c(n)$ also satisfies such results. Specifically, we prove the following results in this article. 

\begin{theorem} \label{Lacunary modulo 2^k}
Let $k\geq1$, $\alpha \geq 1$ and $m\geq1$ be integers with $\gcd(m,2)=1$. Then the set $$\{n\in \mathbb{N}: \bar a_{2^\alpha m}(n) \equiv 0 \pmod {2^k}\}$$ has arithmetic density $1$, namely, 
$$\lim_{X\to\infty} \frac{\#\{0\leq n \leq X : \bar a_{2^\alpha m}(n) \equiv 0 \pmod {2^k}\}}{X} = 1.$$
\end{theorem}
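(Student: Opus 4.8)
The plan is to reduce the generating function \eqref{genfn_bar ac} modulo a power of $2$ to a single holomorphic modular form of positive \emph{integer} weight, and then to invoke a theorem of Serre on the divisibility of Fourier coefficients of modular forms. Throughout I write $c=2^\alpha m$, which is even by hypothesis. First I would recast the generating function in terms of the classical theta function $\varphi(-q)=\sum_{n=-\infty}^{\infty}(-1)^nq^{n^2}$. Using the eta-quotient identities $\varphi(-q)=f_1^2/f_2$ and $\varphi(-q^2)=f_2^2/f_4$, the right-hand side of \eqref{genfn_bar ac} factors cleanly as
\begin{equation*}
\sum_{n\geq0}\bar a_c(n)q^n=\frac{f_4^{c-1}}{f_1^2f_2^{2c-3}}=\frac{1}{\varphi(-q)\,\varphi(-q^2)^{c-1}}.
\end{equation*}

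The engine of the argument is the elementary congruence $\varphi(-q)^{2^k}\equiv1\pmod{2^{k+1}}$. Indeed, $\varphi(-q)=1+2g$ with $g=\sum_{n\geq1}(-1)^nq^{n^2}\in\mathbb{Z}[[q]]$, so the binomial theorem gives $\varphi(-q)^{2^k}=1+\sum_{j\geq1}\binom{2^k}{j}2^jg^j$, and for every $j\geq1$ the $2$-adic valuation satisfies $v_2\!\left(\binom{2^k}{j}2^j\right)=k-v_2(j)+j\geq k+1$, since $j\geq 2^{v_2(j)}\geq 1+v_2(j)$. Hence every term with $j\geq1$ vanishes modulo $2^{k+1}$; the same holds with $q$ replaced by $q^2$. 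Setting $G:=\varphi(-q)^{2^k-1}\,\varphi(-q^2)^{(c-1)(2^k-1)}$ and multiplying by the unit $\varphi(-q)\varphi(-q^2)^{c-1}\in\mathbb{Z}[[q]]$ (constant term $1$), these two congruences yield
\begin{equation*}
\sum_{n\geq0}\bar a_c(n)q^n\equiv G\pmod{2^{k+1}},
\end{equation*}
so that $\bar a_c(n)\equiv[q^n]G\pmod{2^{k+1}}$ for every $n\geq0$.

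It remains to recognize $G$ as a holomorphic modular form of positive integer weight. As a product of the theta functions $\varphi(-q)$ and $\varphi(-q^2)$, each a holomorphic modular form of weight $1/2$ on a suitable $\Gamma_0(N)$ with character, $G$ is a holomorphic modular form with integer Fourier coefficients of weight
\begin{equation*}
\frac{(2^k-1)+(c-1)(2^k-1)}{2}=\frac{(2^k-1)c}{2}.
\end{equation*}
This is the one and only place where the parity of $c$ enters: since $2^k-1$ is odd, the quantity $(2^k-1)c$ is even precisely when $c$ is even, so for $c=2^\alpha m$ the weight is a positive integer and $G$ genuinely lies in $M_{(2^k-1)c/2}(\Gamma_0(N),\chi)$. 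Finally I would apply Serre's theorem, which guarantees that the Fourier coefficients of such a form are almost all divisible by any fixed integer; taking this integer to be $2^k$ shows that $[q^n]G\equiv0\pmod{2^k}$ for a set of $n$ of density $1$. Combined with $\bar a_c(n)\equiv[q^n]G\pmod{2^{k+1}}$, this forces $\bar a_c(n)\equiv0\pmod{2^k}$ on a density-$1$ set, which is exactly the assertion of the theorem.

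I expect the only real obstacle to be the modular-forms bookkeeping in the third paragraph, namely pinning down the exact level $N$ and nebentypus $\chi$ of $G$ (and confirming holomorphy at every cusp) so that Serre's theorem applies verbatim. The congruence manipulations in the second paragraph and the weight computation are routine, and the crucial conceptual point—that evenness of $c$ is precisely what promotes the weight from a half-integer to an integer—falls out of the arithmetic of $(2^k-1)c/2$.
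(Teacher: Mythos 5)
Your proposal is correct, but it takes a genuinely different route from the paper. The paper's proof of Theorem \ref{Lacunary modulo 2^k} is essentially a one-line application of the lacunarity criterion of Cotron \emph{et al.} (Theorem \ref{thm:cotron}): writing the generating function as the eta-quotient $\eta(4z)^{c-1}/\bigl(\eta(z)^2\eta(2z)^{2c-3}\bigr)$, one checks that the weight $-c/2$ is an integer (this is where $c=2^\alpha m$, $\alpha\geq 1$ enters), that $2^2$ divides $\mathcal{D_G}=4$, and that the inequality $2^2\geq\sqrt{(2+(4c-6))/\frac{c-1}{4}}=4$ holds (with equality), whence lacunarity modulo $2^k$ follows directly. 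You instead multiply $1/\bigl(\varphi(-q)\varphi(-q^2)^{c-1}\bigr)$ by $\varphi(-q)^{2^k}\varphi(-q^2)^{(c-1)2^k}\equiv 1\pmod{2^{k+1}}$ to replace the negative-weight eta-quotient by the honest holomorphic form $G=\varphi(-q)^{2^k-1}\varphi(-q^2)^{(c-1)(2^k-1)}$ of positive integer weight $(2^k-1)c/2$, and then invoke Serre's divisibility theorem (the paper's Theorem \ref{thm_ono2}); your $2$-adic valuation argument for $\varphi(-q)^{2^k}\equiv 1\pmod{2^{k+1}}$ is sound, and the deferred bookkeeping does work out (as an eta-quotient, $G=\eta(z)^{2(2^k-1)}\eta(2z)^{(2^k-1)(2c-3)}\eta(4z)^{-(c-1)(2^k-1)}$ satisfies the hypotheses of Theorem \ref{thm_ono1} at level $32$, with nonnegative order at every cusp since it is a product of theta functions). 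Interestingly, your strategy is precisely the one the paper itself uses for Theorems \ref{Lacunary modulo 3^k} and \ref{Lacunary modulo p_i^k}, where the authors multiply by an eta-quotient congruent to $1$ modulo $3^{k+1}$ (resp.\ $p_i^{k+1}$) and then apply Theorem \ref{thm_ono2}; so you have in effect given a unified treatment of all three density theorems, at the cost of redoing by hand the work that the citation to \cite{cotronetal} outsources. A further small dividend of your argument is that the parity hypothesis on $c$ appears transparently as the integrality of the weight of $G$, and you avoid relying on the borderline equality case $4=4$ in the Cotron \emph{et al.} inequality.
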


\begin{theorem} \label{Lacunary modulo 3^k}
Let $k\geq1$, $\alpha \geq 1$ and $m\geq1$ be integers with $\gcd(m,2)=1$ such that $3^{k-2}\geq 2^{\alpha -3}m$, then the set $$\{n\in \mathbb{N}: \bar a_{2^\alpha m}(n) \equiv 0 \pmod {3^k}\}$$ has arithmetic density $1$, namely, 
$$\lim_{X\to\infty} \frac{\#\{0\leq n \leq X : \bar a_{2^\alpha m}(n) \equiv 0 \pmod {3^k}\}}{X} = 1.$$
\end{theorem}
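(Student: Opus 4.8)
The plan is to mimic the strategy behind Theorem \ref{Lacunary modulo 2^k}, replacing the $2$-adic input by its $3$-adic analogue: I will show that, modulo $3^k$, the generating series for $\bar a_{2^\alpha m}(n)$ agrees with the $q$-expansion of a holomorphic modular form of positive integer weight with integer coefficients, and then invoke Serre's density theorem. Write $c=2^\alpha m$. From \eqref{genfn_bar ac},
$$
\sum_{n\ge 0}\bar a_{c}(n)q^n=\frac{f_4^{c-1}}{f_1^2 f_2^{2c-3}},
$$
which, viewed as the eta-quotient $\frac{\eta(4\tau)^{c-1}}{\eta(\tau)^2\eta(2\tau)^{2c-3}}$, has negative weight $-c/2$ and negative exponents on $\eta(\tau)$ and $\eta(2\tau)$; the task is to trade these for a genuine holomorphic form after reducing modulo $3^k$.

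The basic tool is the elementary congruence $f_d^{\,3^{j}}\equiv f_{3d}^{\,3^{j-1}}\pmod{3^{j}}$, which follows from $(1-x)^3\equiv 1-x^3\pmod 3$ by induction on $j$. Since both sides are units in $\mathbb{Z}[[q]]$, the congruence passes to reciprocals, so I may rewrite $f_1^{-2}$ and $f_2^{-(2c-3)}$ using positive powers of $f_1,f_2$ at the cost of introducing negative powers of $f_3,f_6$; for instance $f_1^{-2}\equiv f_1^{3^k-2}f_3^{-3^{k-1}}\pmod{3^k}$, and similarly for $f_2$ after choosing the least $b$ with $3^k b\ge 2c-3$. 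This produces a congruence $\sum_{n\ge0}\bar a_c(n)q^n\equiv g(\tau)\pmod{3^k}$, where $g$ is an eta-quotient of positive integer weight $3^{k-1}(1+b)-c/2$ (an integer since $c$ is even, and positive under the hypothesis); one also checks $\sum_\delta \delta\,r_\delta=0$, so no fractional power of $q$ intervenes and the Fourier coefficients of $g$ are exactly $\bar a_c(n)\bmod 3^k$.

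The crux, and the place where the hypothesis enters, is to upgrade $g$ to a \emph{holomorphic} modular form, since Serre's theorem requires holomorphy at every cusp. The negative exponents sit on $\eta(3\tau)$ and $\eta(6\tau)$, and a direct computation with Ligozat's order formula
$$
\mathrm{ord}_{a/d}\,g=\frac{N}{24}\sum_{\delta\mid N}\frac{\gcd(d,\delta)^2\,r_\delta}{\gcd(d,N/d)\,d\,\delta}
$$
shows that the dangerous cusps are those with $3\mid d$, where the order of the naive representative is proportional to $-3(c+1)/4<0$. I will therefore exploit the freedom to multiply $g$ by factors of the shape $\bigl(\eta(3\tau)^{3^k}/\eta(9\tau)^{3^{k-1}}\bigr)^{t}\equiv 1\pmod{3^k}$ (passing to a level $N$ divisible by $36$), which leaves $g\bmod 3^k$ unchanged but raises the order at the cusps lying above $3$. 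The inequality $3^{k-2}\ge 2^{\alpha-3}m$, i.e. $c\le 8\cdot 3^{k-2}$, is precisely what guarantees that a bounded such adjustment renders the orders at all cusps non-negative without creating new poles. I expect the cusp-by-cusp verification of non-negativity, together with the two Ligozat divisibility conditions modulo $24$ on the chosen level $N$, to be the main obstacle.

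Once $g$ is known to be a holomorphic modular form of positive integer weight on $\Gamma_0(N)$ with nebentypus $\chi$ and integer coefficients, Serre's theorem yields that almost all of its Fourier coefficients are divisible by $3^k$, so the set $\{n:[q^n]g\equiv 0\pmod{3^k}\}$ has density $1$. Since $[q^n]g\equiv \bar a_{2^\alpha m}(n)\pmod{3^k}$ for all $n$, the same density statement holds for $\bar a_{2^\alpha m}(n)$, which is exactly the assertion of the theorem.
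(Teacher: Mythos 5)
Your overall strategy --- replace the generating function \eqref{genfn_bar ac} modulo $3^k$ by an eta-quotient that is a holomorphic modular form of positive integer weight and then invoke Theorem \ref{thm_ono2} --- is exactly the paper's, and your elementary ingredients ($f_d^{3^j}\equiv f_{3d}^{3^{j-1}}\pmod{3^j}$, passage to reciprocals, the weight count $3^{k-1}(1+b)-c/2$) are correct. The genuine gap is the step you yourself defer as ``the main obstacle'', and it is worse than unfinished: the repair mechanism you propose provably cannot close it. Multiplying by $\bigl(\eta(3\tau)^{3^k}/\eta(9\tau)^{3^{k-1}}\bigr)^{t}$ forces $9\mid N$, so $\Gamma_0(N)$ acquires cusps $a/d$ with $9\mid d$; at every such cusp this factor contributes
\begin{equation*}
\frac{\gcd(d,3)^2}{3}\,3^{k}t-\frac{\gcd(d,9)^2}{9}\,3^{k-1}t \;=\; 3^{k+1}t-3^{k+1}t \;=\; 0
\end{equation*}
to the order sum of Theorem \ref{thm_ono1}, i.e.\ it raises nothing there, whereas for your $g$ the corresponding sum at $d=9$ is
\begin{equation*}
(3^k-2)+\tfrac12\bigl(3^kb-2c+3\bigr)+\tfrac14(c-1)-3^{k}-\tfrac12\,3^{k}b \;=\; -\tfrac34(c+1)\;<\;0.
\end{equation*}
This deficit is independent of $k$ and $b$, so no hypothesis of the shape $c\le 8\cdot 3^{k-2}$ can possibly enter to rescue it; and replacing $\eta(9\tau)$ by $\eta(27\tau)$ merely transplants the identical defect to cusps with $27\mid d$, an infinite regress. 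So the holomorphy step does not just remain to be verified --- along the route you describe, it fails.

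The paper escapes this trap by doing less, not more: it rescales $q\mapsto q^{24}$ (working with $\eta(24z),\eta(48z),\eta(72z),\eta(96z)$, which makes the $\sum_\delta \delta r_\delta\equiv 0\pmod{24}$ condition automatic), clears only the exponent $-2$ on $\eta(24z)$ by the single factor $\eta(24z)^{3^{k+1}}/\eta(72z)^{3^k}\equiv 1\pmod{3^{k+1}}$, and leaves the negative exponents on $\eta(48z)$ and $\eta(72z)$ in place, checking non-negativity of the cusp orders in a finite table; your hypothesis $3^{k-2}\ge 2^{\alpha-3}m$ enters there as the single binding inequality $8\cdot 3^k-9\cdot 2^\alpha m+9\ge 0$. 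The decisive structural difference is that the paper's level is taken with $3$-part equal to $3$, so denominators $d$ with $9\mid d$ never appear in its cusp analysis, while your construction (poles on both $\eta(3\tau)$ and $\eta(6\tau)$, then adjoining $\eta(9\tau)$) manufactures exactly the cusps it cannot repair. (Whether one may really take level $2^8\cdot 3$ while $\eta(72z)$ occurs, given $72\nmid 2^8\cdot 3$, is itself a point deserving scrutiny in the paper; but that does not help you, since your version is irreparable by the proposed factors.) To fix your argument you would have to keep the $3$-part of the level equal to $3$ --- i.e.\ never introduce $\eta(9\tau)$ --- which in practice collapses your construction back to the paper's.
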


The fact that the action of Hecke algebras on the spaces of modular forms of level $1$ modulo $2$ is locally nilpotent was first observed by Serre \cite{Serre2, Serre1} and proved by Tate \cite{Tate}. This was further generalized by Ono and Taguchi \cite{Ono-Taguchi2005} to higher levels. We can use Ono and Taguchi's result to prove the following theorem.

\begin{theorem} \label{last thm}
Let $n$ be a non-negative integer, $\alpha \geq 1$, $m\geq1$ are integers with $\gcd(m,2)=1$, and $N$ is a positive integer such that $N = 1, 3, 5, 15$ or $17$. Then there exists an integer $i\geq0$ such that for every $j\geq1$ and $q_1, q_2, \dots, q_{i+j}$ are odd primes not dividing $N$, we have 
$$\bar a_{2^{\alpha}m} \left(\frac{q_1 \cdots q_{i+j} \cdot n}{24}\right) \equiv 0 \pmod {3^j}.$$
\end{theorem}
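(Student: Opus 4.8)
The plan is to recast the divisibility assertion as the local nilpotency of the mod-$3$ Hecke algebra acting on a fixed space of integer-weight modular forms, in the spirit of Serre and Tate and their generalization by Ono and Taguchi \cite{Ono-Taguchi2005}. Write $c=2^{\alpha}m$ with $m$ odd, and recall from \eqref{genfn_bar ac} that $\sum_{n\ge0}\bar a_c(n)q^n=f_4^{c-1}/(f_1^2f_2^{2c-3})$. Since $c$ is even, the eta-quotient $g(z):=\eta(4z)^{c-1}\eta(z)^{-2}\eta(2z)^{3-2c}$ has integer weight $-c/2$, and because $\tfrac{1}{24}\bigl(4(c-1)-2-2(2c-3)\bigr)=0$ its Fourier expansion is exactly $\sum_{n}\bar a_c(n)q^n$. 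The first task is to manufacture, for each $j$, a genuine holomorphic form over $\mathbb Z$ congruent to $g$ modulo $3^j$. For this I would exploit the congruence $f_d^{3^b}\equiv f_{3d}^{3^{b-1}}\pmod{3^b}$, which trades a high $3$-power of each $\eta(dz)$ for a power of $\eta(3dz)$ modulo $3^b$; combined with multiplication by a fixed $\eta$-product of positive weight to clear the poles and the negative weight, this yields a holomorphic form $F_{c,j}(z)=\sum_{t\ge0}b(t)q^t\in M_k(\Gamma_0(N),\chi)$ with integer coefficients, level $N\in\{1,3,5,15,17\}$, and $F_{c,j}\equiv\bigl(\text{a fixed }\eta\text{-product}\bigr)\cdot\sum_n\bar a_c(n)q^n\pmod{3^j}$.

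With $F_{c,j}$ in hand I would invoke the Ono--Taguchi-type nilpotency result \cite{Ono-Taguchi2005} for the mod-$3$ Hecke algebra: for each admissible level this algebra acts locally nilpotently on the space of modular forms on $\Gamma_0(N)$, and, crucially, there is a single integer $i\ge0$, independent of $j$, such that for all primes $q_1,\dots,q_{i+j}$ with $q_t\nmid N$ one has $F_{c,j}\mid T_{q_1}\cdots T_{q_{i+j}}\equiv0\pmod{3^j}$. The uniform $i$ reflects the plain mod-$3$ nilpotency, while the additional $j$ operators effect the deepening from $\bmod\,3$ to $\bmod\,3^j$; this is exactly the structure recorded in the statement. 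It is this step that forces $N$ into $\{1,3,5,15,17\}$ and the primes $q_t$ to avoid $N$, reproducing the hypotheses of the theorem and identifying the integer $i$.

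It then remains to read off coefficients. For a prime $p\nmid N$ the operator $T_p$ sends the $t$-th Fourier coefficient of a weight-$k$ form to $b(pt)+\chi(p)p^{k-1}b(t/p)$, where $b(t/p)=0$ unless $p\mid t$; hence, for $t$ coprime to all the $q_s$, the coefficient of $q^{q_1\cdots q_{i+j}\,t}$ in $F_{c,j}\mid T_{q_1}\cdots T_{q_{i+j}}$ is exactly $b(q_1\cdots q_{i+j}\,t)$. Feeding in the congruence for $F_{c,j}$ and tracking the global $q^{1/24}$-normalization of the $\eta$-factors, the index $q_1\cdots q_{i+j}\,t$ translates into an argument of the shape $\tfrac{q_1\cdots q_{i+j}\,n}{24}$ of $\bar a_c$, and the vanishing modulo $3^j$ gives $\bar a_{2^{\alpha}m}\bigl(\tfrac{q_1\cdots q_{i+j}\,n}{24}\bigr)\equiv0\pmod{3^j}$, as claimed.

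The principal obstacle is the construction in the first paragraph: producing an honest holomorphic form of one of the five permitted levels that is congruent modulo $3^j$ to the generating series. The difficulty is that the even-index factors $f_2,f_4$ survive the mod-$3$ power congruence (it only triples indices), so attaining an odd level in $\{1,3,5,15,17\}$ requires a careful choice of the auxiliary $\eta$-product together with, if needed, a restriction to a suitable residue class via a $U$-operator or Atkin--Lehner sieve; this is precisely where the hypothesis $\gcd(m,2)=1$ and the admissible list of $N$ are used. One must then verify holomorphy at every cusp through Ligozat's order formula and confirm the weight, level, and nebentypus via the criteria of Gordon, Hughes, and Newman, so that the nilpotency theorem genuinely applies. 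Once this modularity bookkeeping is in place, the Hecke computation and the coefficient extraction are routine.
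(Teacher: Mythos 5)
Your proposal follows the same architecture as the paper's proof: dress the generating function $f_4^{c-1}/(f_1^2f_2^{2c-3})$ in an auxiliary eta-power that is $\equiv 1 \pmod{3^{j}}$ (via $f_d^{3^b}\equiv f_{3d}^{3^{b-1}}\pmod{3^b}$) so as to obtain a holomorphic integer-weight form congruent to the generating series, then invoke the Ono--Taguchi nilpotency theorem and read off coefficients through the Hecke action. Your final coefficient-extraction step is exactly the paper's (and is correct, granted the primes $q_s$ are taken distinct).

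There is, however, a genuine gap, and it sits precisely where you place your ``principal obstacle.'' You have misread the hypothesis of the Ono--Taguchi theorem: Theorem \ref{thm_Ono-Taguchi} applies to forms in $M_k(\Gamma_0(2^aN),\chi)$, where only the \emph{odd part} $N$ must lie in $\{1,3,5,15,17\}$ and $\chi$ has conductor $2^3N$; the level itself may --- and here must --- carry a power of $2$. You instead demand that $F_{c,j}$ live at an odd level $N\in\{1,3,5,15,17\}$, correctly observe that this is unattainable because the factors $\eta(2z)$ and $\eta(4z)$ survive the mod-$3$ index-tripling congruence, and defer the fix to an unspecified ``$U$-operator or Atkin--Lehner sieve.'' That step would fail (the even part of the level genuinely cannot be removed), and, more to the point, it does not need to succeed: the paper's construction (in the proof of Theorem \ref{Lacunary modulo 3^k}, reused for Theorem \ref{last thm}) rescales $q\mapsto q^{24}$, multiplies by $\eta(24z)^{3^{k+1}}/\eta(72z)^{3^k}\equiv 1\pmod{3^{k+1}}$, checks holomorphy at all cusps, and lands in $M_{3^k-2^{\alpha-1}m}\left(\Gamma_0(2^8\cdot 3),\chi\right)$, i.e.\ $a=8$, $N=3$, exactly the shape the nilpotency theorem accepts. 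With the theorem read correctly, your obstacle evaporates and the rest of your outline goes through. A secondary caution: because your $F_{c,j}$ changes with $j$ (its weight grows with $j$), the constant $i$ furnished by Ono--Taguchi a priori depends on $j$, so your assertion that $i$ is ``independent of $j$'' is unjustified in your setup; the paper sidesteps this by applying the theorem once to a single fixed form $B_{\alpha,m,k}$, for which the uniformity in the power of $3$ is part of the theorem's conclusion.
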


Theorems \ref{Lacunary modulo 2^k}, \ref{Lacunary modulo 3^k}, and \ref{last thm} are proved in Section \ref{sec:density}.

This paper is arranged as follows: Section \ref{prelim} contains some preliminary theory required in our later proofs, Sections \ref{sec:isolated} -- \ref{sec:density} contains the proofs of the above results, and finally we close the paper with some concluding remarks in Section \ref{sec:conc}.

\section{Preliminaries}\label{prelim}

\subsection{Various results on modular forms}
Let $\mathbb{H}$ be the complex upper half-plane. We define the following matrix groups
\begin{align*}
\Gamma & :=\left\{\begin{bmatrix}
a  &  b \\
c  &  d      
\end{bmatrix}: a, b, c, d \in \mathbb{Z}, ad-bc=1
\right\},\\
\Gamma_{\infty} & :=\left\{
\begin{bmatrix}
1  &  n \\
0  &  1      
\end{bmatrix} \in \Gamma : n\in \mathbb{Z}  \right\}.
\end{align*}
For a positive integer $N$, let 
\begin{align*}
\Gamma_{0}(N) :=\left\{
\begin{bmatrix}
a  &  b \\
c  &  d      
\end{bmatrix} \in \Gamma : c\equiv~0\pmod N \right\}, \\
\Gamma_{1}(N) :=\left\{
\begin{bmatrix}
a  &  b \\
c  &  d      
\end{bmatrix} \in \Gamma_{0} (N) : a\equiv d\equiv~1\pmod N \right\}.
\end{align*} 

We denote by $M_{\ell}(\Gamma_1(N))$ for a positive integer $l$, the complex vector space of modular forms of weight $\ell$ with respect to $\Gamma_1(N)$. The index of $\Gamma_0(N)$ in $\Gamma$ is
\begin{align*}
 \left[\Gamma : \Gamma_0(N)\right] = N\prod_{\ell|N}\left(1+\frac{1}{\ell}\right), 
\end{align*}
where $\ell$ is a prime divisor of $N$.

\begin{defn}
If $\chi$ is a Dirichlet character modulo $N$, then a form $f(z) \in M_l(\Gamma_1(N))$ has Nebentypus character $\chi$ if 
\begin{align*}
    f\left(\frac{az+b}{cz+d}\right) = \chi(d) (cz+d)^\ell f(z), 
\end{align*}
for all $z \in \mathbb{H}$ and all 
$\begin{bmatrix}
a  &  b \\
c  &  d      
\end{bmatrix} \in \Gamma_{0} (N)$. The space of such modular forms is denoted by $M_{\ell}(\Gamma_0(N), \chi)$. 
\end{defn}

Recall that the Dedekind's eta-function $\eta(z)$ is defined by 
$$\eta(z) := q^{1/24} (q;q)_{\infty} = q^{1/24} \prod_{n=1}^{\infty} (1-q^n),$$ 
where $q := e^{2\pi iz}$ and $z \in \mathbb{H}$. A function $f(z)$ is called an eta-quotient if it is of the form 
$$f(z) = \prod_{\delta | N} \eta(\delta z)^{r_{\delta}},$$
where $N$ is a positive integer and $r_{\delta}$ is an integer. We now recall two theorems from \cite[p. 18]{ono2004web} which will be used to prove some of our results. 

\begin{theorem}\label{thm_ono1} \cite[Theorems 1.64 and 1.65]{ono2004web}
    If $f(z) = \prod_{\delta | N} \eta(\delta z)^{r_{\delta}}$ is an eta-quotient with $\ell = \frac{1}{2} \sum_{\delta | N} r_{\delta} \in \mathbb{Z}$, with 
\[
    \sum_{\delta\mid N} \delta r_{\delta}\equiv 0 \pmod{24}
\quad
\text{and} \quad
    \sum_{\delta\mid N} \frac{N}{\delta}r_{\delta}\equiv 0 \pmod{24}, 
\]
then $f(z)$ satisfies 
\begin{align*}
   f\left(\frac{az+b}{cz+d}\right)=\chi(d)(cz+d)^{\ell}f(z),
\end{align*}
for every 
$\begin{bmatrix}
	a  &  b \\
	c  &  d      
\end{bmatrix} \in \Gamma_0(N)$. 
Here the character $\chi$ is defined by $\chi(d):=\left(\frac{(-1)^{\ell} \prod_{\delta | N} \delta^{r_\delta}}{d}\right)$. 
In addition, if $c$, $d$, and $N$ are positive integers with $d|N$ and $\gcd(c,d)=1$, then the order of vanishing of $f(z)$ at the cusp $\dfrac{c}{d}$ is $\dfrac{N}{24} \sum_{\delta | N} \dfrac{\gcd(d, \delta)^2 r_\delta}{\gcd(d, \frac{N}{d}) d \delta}$. 
\end{theorem}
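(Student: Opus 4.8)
The statement \(\mathrm{Theorem~\ref{thm_ono1}}\) is classical (due to Ligozat, Newman, and others, as packaged in Ono's book), so what follows is a sketch of the standard argument. The plan is to reduce everything to the transformation behavior of the Dedekind eta-function under \(SL_2(\mathbb{Z})\) and then assemble the eta-quotient factor by factor. Recall that for \(\gamma=\begin{bmatrix} a & b \\ c & d\end{bmatrix}\in SL_2(\mathbb{Z})\) with \(c>0\) one has
\[
\eta(\gamma z)=\varepsilon(a,b,c,d)\,(cz+d)^{1/2}\eta(z),
\]
where the multiplier \(\varepsilon(a,b,c,d)\) is an explicit \(24\)th root of unity expressible through the Dedekind sum \(s(d,c)\) as
\[
\varepsilon(a,b,c,d)=\exp\left(\pi i\left(\frac{a+d}{12c}-s(d,c)-\frac14\right)\right).
\]
I would take this transformation law, together with its reformulation in terms of Kronecker symbols, as the single external input, since it is itself proved from the functional equation of \(\eta\) and Dedekind reciprocity.

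First I would record how each individual factor \(\eta(\delta z)\) transforms under \(\gamma\in\Gamma_0(N)\). Writing \(\delta\gamma z=\gamma_\delta(\delta z)\) with
\[
\gamma_\delta=\begin{bmatrix} a & b\delta \\ c/\delta & d\end{bmatrix},
\]
one checks that \(\gamma_\delta\in SL_2(\mathbb{Z})\): the divisibility \(\delta\mid N\mid c\) guarantees \(c/\delta\in\mathbb{Z}\), and \(\det\gamma_\delta=ad-bc=1\). Hence
\[
\eta(\delta\gamma z)=\varepsilon(\gamma_\delta)\left(\tfrac{c}{\delta}\,\delta z+d\right)^{1/2}\eta(\delta z)=\varepsilon(\gamma_\delta)(cz+d)^{1/2}\eta(\delta z).
\]
Taking the product over \(\delta\mid N\) with exponents \(r_\delta\) gives
\[
f(\gamma z)=\left(\prod_{\delta\mid N}\varepsilon(\gamma_\delta)^{r_\delta}\right)(cz+d)^{\frac12\sum_\delta r_\delta}f(z),
\]
so the weight is \(\ell=\frac12\sum_\delta r_\delta\) as claimed, and the entire content of the first assertion reduces to the identity \(\prod_{\delta\mid N}\varepsilon(\gamma_\delta)^{r_\delta}=\chi(d)\).

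The main obstacle is precisely this last identity, which is a Dedekind-sum computation. Substituting the explicit multiplier for each \(\gamma_\delta\), the product splits into a Jacobi-symbol part and an exponential phase. Using the Kronecker-symbol form of the multiplier together with quadratic reciprocity, the symbol parts collapse, via multiplicativity in the top entry, to \(\left(\frac{(-1)^\ell\prod_\delta\delta^{r_\delta}}{d}\right)=\chi(d)\). The residual phase is a rational multiple of \(\pi i\) built from the two quantities \(\sum_\delta\delta r_\delta\) (the leading \(q\)-exponent of \(f\) at \(\infty\)) and \(\sum_\delta (N/\delta)r_\delta\) (its counterpart under the Fricke-type involution \(z\mapsto -1/(Nz)\)); the two hypotheses that these be \(\equiv 0\pmod{24}\) are exactly what forces the phase to be trivial. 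Tracking the \(\pmod{24}\) bookkeeping uniformly across the parity cases of \(c/\delta\) and \(d\) is the delicate point, and is where essentially all the work lies.

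Finally, for the order of vanishing at a cusp \(c/d\) with \(d\mid N\) and \(\gcd(c,d)=1\), I would pass to the local uniformizer there, whose width on \(\Gamma_0(N)\) is \(h=\frac{N}{d\,\gcd(d,N/d)}\). Conjugating each factor \(\eta(\delta z)\) by a scaling matrix \(\sigma\in SL_2(\mathbb{Z})\) sending \(\infty\) to \(c/d\) and invoking the leading \(q^{1/24}\) behavior of \(\eta\) at \(\infty\), one finds that \(\eta(\delta z)\) vanishes to order \(\frac{h}{24}\cdot\frac{\gcd(d,\delta)^2}{\delta}\) at \(c/d\), the factor \(\gcd(d,\delta)^2\) recording how \(\delta z\) interacts with the denominator \(d\) of the cusp. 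Summing these contributions with weights \(r_\delta\) yields
\[
\frac{h}{24}\sum_{\delta\mid N}\frac{\gcd(d,\delta)^2 r_\delta}{\delta}=\frac{N}{24}\sum_{\delta\mid N}\frac{\gcd(d,\delta)^2 r_\delta}{\gcd(d,N/d)\,d\,\delta},
\]
which is the stated formula. This last part is a direct local computation and presents no essential difficulty beyond the correct normalization of the cusp width.
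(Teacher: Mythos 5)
The paper contains no proof of this statement to compare against: Theorem~\ref{thm_ono1} is quoted as a preliminary, cited to Ono's book (Theorems 1.64 and 1.65 of \cite{ono2004web}), which in turn packages classical results of Gordon--Hughes, Newman, and Ligozat. Your proposal therefore has to be judged on its own merits rather than against an argument in the paper.

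Your outline is the standard route, and its concrete ingredients check out: for $\gamma = \left(\begin{smallmatrix} a & b \\ c & d \end{smallmatrix}\right) \in \Gamma_0(N)$ the conjugated matrices $\gamma_\delta = \left(\begin{smallmatrix} a & b\delta \\ c/\delta & d \end{smallmatrix}\right)$ do lie in $SL_2(\mathbb{Z})$, giving the weight $\ell = \frac12\sum_{\delta} r_\delta$; the cusp $c/d$ with $d \mid N$ has width $h = N/\bigl(d\gcd(d,N/d)\bigr)$ on $\Gamma_0(N)$ (since $\gcd(d^2,N) = d\gcd(d,N/d)$), and summing the local orders $\frac{h}{24}\cdot\frac{\gcd(d,\delta)^2}{\delta}$ of the factors $\eta(\delta z)$ with multiplicities $r_\delta$ reproduces the stated vanishing formula exactly. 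However, there is a genuine gap, and it sits precisely where you locate it: the identity $\prod_{\delta \mid N}\varepsilon(\gamma_\delta)^{r_\delta} = \chi(d)$ is not an incidental bookkeeping step --- it \emph{is} the theorem; everything else in your sketch is routine. Each $\varepsilon(\gamma_\delta)$ is a $24$th root of unity, and one must show that under the two hypotheses $\sum_\delta \delta r_\delta \equiv 0 \equiv \sum_\delta (N/\delta) r_\delta \pmod{24}$ their product collapses, uniformly in $\gamma$, to the Kronecker symbol $\left(\frac{(-1)^{\ell}\prod_{\delta\mid N}\delta^{r_\delta}}{d}\right)$. Naming quadratic reciprocity and ``mod $24$ bookkeeping'' identifies the tools but does not execute the computation; a complete argument needs the Jacobi-symbol form of the eta multiplier (split into the cases $c/\delta$ even and odd, via Dedekind reciprocity), the degenerate cases $c = 0$ and $c < 0$ of the transformation law, and the verification that the residual exponential phase is killed by exactly the two stated congruences and no weaker condition. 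Until that calculation is written out, what you have is a correct and well-organized roadmap to the Gordon--Hughes--Newman--Ligozat theorem rather than a proof of it.
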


We also recall the following result of Sturm \cite{Sturm} which will be used in the next section.
\begin{theorem}\label{Sturm}
Let $k$ be an integer and $g(z)=\sum_{n=0}^\infty a(n)q^n$ a modular form of weight $k$ for $\Gamma_{0}(N)$. For any given positive integer $m$, if $a(n)\equiv 0 \pmod{m}$ holds for all 
$n\leq \dfrac{kN}{12} \prod\limits_{p~ \text{prime},~p\,|\,N}~\left(1+\dfrac{1}{p}\right)\,$
 then $a(n)\equiv 0 \pmod m$ holds for any $n\geq 0$.
\end{theorem}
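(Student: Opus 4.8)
The plan is to prove the statement first for a prime modulus and then bootstrap to an arbitrary $m$. Set $B := \frac{kN}{12}\prod_{p\mid N}\left(1+\frac{1}{p}\right) = \frac{k}{12}\,[\Gamma:\Gamma_0(N)]$, using the index formula $[\Gamma:\Gamma_0(N)] = N\prod_{\ell\mid N}(1+\frac{1}{\ell})$ recorded above. Assuming the result is known whenever the modulus is prime, I would first dispose of prime powers $\ell^e$ by induction on $e$: if $a(n)\equiv 0 \pmod{\ell^e}$ for all $n\le B$, then in particular $a(n)\equiv 0 \pmod{\ell^{e-1}}$ for $n\le B$, so by the inductive hypothesis $\ell^{e-1}\mid a(n)$ for every $n$. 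Hence $h:=\ell^{-(e-1)}g$ is again a modular form of weight $k$ on $\Gamma_0(N)$ with integer coefficients, and the hypothesis says its first $\lfloor B\rfloor+1$ coefficients are divisible by $\ell$; the prime case applied to $h$ completes the induction. The Chinese Remainder Theorem then assembles the prime-power cases into the statement for general $m$, so everything reduces to $m=\ell$ prime.

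For the prime case the crux is a valence (order-of-vanishing) bound modulo $\ell$: if $g$ has $\ell$-integral coefficients and its reduction $\bar g\in\mathbb{F}_\ell[[q]]$ is nonzero, then the order of vanishing of $\bar g$ at the cusp $\infty$ is at most $B$. Granting this, suppose $a(n)\equiv 0\pmod\ell$ for all $n\le B$ while $\bar g\neq 0$. The least index $n$ with $\ell\nmid a(n)$ is precisely the order of vanishing of $\bar g$ at $\infty$, and it would then exceed $B$, contradicting the bound. Therefore $\bar g=0$, i.e. $a(n)\equiv 0\pmod\ell$ for all $n$, which is exactly the assertion for $m=\ell$.

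It remains to establish the mod-$\ell$ valence bound, and this is where I expect the real work to lie. The plan is to interpret the reductions of weight-$k$ forms on $\Gamma_0(N)$ as global sections of the line bundle $\omega^{\otimes k}$ on the modular curve $X_0(N)$ over $\mathbb{F}_\ell$, whose degree is $\frac{k}{12}[\Gamma:\Gamma_0(N)]=B$; since such a section has only nonnegative local orders and their total is the degree, the order at any single cusp, in particular at $\infty$, is at most $B$. An equivalent, more hands-on route, closer in spirit to the elementary arguments elsewhere in this paper, is to build a $\mathbb{Z}_{(\ell)}$-echelon basis $g_1,\dots,g_d$ of the $\ell$-integral forms with strictly increasing leading exponents $e_1<\cdots<e_d$, to bound $e_d\le B$ via the valence formula over $\mathbb{C}$, and to expand $g$ in this basis reading off coefficients in increasing exponent order so that each coordinate is forced divisible by $\ell$. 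The genuine obstacle is precisely the transfer from characteristic zero to characteristic $\ell$: controlling the reduction when $\ell\mid N$ (bad reduction of $X_0(N)$) and for the small primes $\ell\in\{2,3\}$ (elliptic points and the failure of a naive Hasse-invariant normalization), where the geometry of the integral model, rather than the characteristic-zero valence formula alone, is what supplies the bound. This is exactly the content furnished by Sturm \cite{Sturm} and in the treatment of \cite{ono2004web}.
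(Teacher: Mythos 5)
The paper itself contains no proof of this statement: it is recalled verbatim from Sturm \cite{Sturm} as a known result, so there is no internal argument to compare yours against. Judged on its own terms, your outer reductions are sound. The induction on prime powers (using the inductive hypothesis to conclude $\ell^{e-1}\mid a(n)$ for all $n$, then applying the prime case to the integral form $\ell^{-(e-1)}g$) and the Chinese Remainder assembly into a general modulus $m$ are both correct, and they do reduce everything to the case of a prime modulus $\ell$. What remains is exactly the heart of Sturm's theorem: that a weight-$k$ form on $\Gamma_0(N)$ with $\ell$-integral coefficients whose reduction $\bar g \in \mathbb{F}_\ell[[q]]$ is nonzero satisfies $\mathrm{ord}_\infty(\bar g)\le \frac{k}{12}\left[\Gamma:\Gamma_0(N)\right]$ --- and this you do not prove, but defer to \cite{Sturm} and \cite{ono2004web}. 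Your geometric sketch (degree of the line bundle $\omega^{\otimes k}$ on the modular curve over $\mathbb{F}_\ell$) is the right mechanism in the good-reduction case, and you correctly identify where the genuine difficulties lie: $\ell\mid N$ (bad reduction) and $\ell\in\{2,3\}$ (elliptic points). So your proposal amounts to a correct reduction framework plus an honest citation of the crux; since the paper disposes of the entire theorem by citation, you have, if anything, supplied more detail than the paper does, but neither your text nor the paper contains a self-contained proof of the mod-$\ell$ valence bound.
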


We now recall the definition of Hecke operators.
Let $m$ be a positive integer and $f(z) = \sum_{n=0}^{\infty} a(n)q^n \in M_{\ell}(\Gamma_0(N),\chi)$. Then the action of Hecke operator $T_m$ on $f(z)$ is defined by 
\begin{align*}
f(z)\,|\,T_m := \sum_{n=0}^{\infty} \left(\sum_{d\mid \gcd(n,m)}\chi(d)d^{\ell-1}a\left(\frac{nm}{d^2}\right)\right)q^n.
\end{align*}
In particular, if $m=p$ is prime, we have 
\begin{align}\label{Tp}
f(z)\,|\,T_p := \sum_{n=0}^{\infty} \left(a(pn)+\chi(p)p^{\ell-1}a\left(\frac{n}{p}\right)\right)q^n.
\end{align}
We take by convention that $a(n/p)=0$ whenever $p \nmid n$. The next result  follows directly from \eqref{Tp}.

\begin{theorem} \label{theorem_hecke}
Let $p$ be a prime, $g(z)\in \mathbb{Z}[[q]], h(z)\in \mathbb{Z}[[q^p]]$, and $k > 1$. Then, we have
\begin{align*}
\left(g(z)h(z)\right)\,|\,T_p\equiv\left(g(z)|T_p\cdot h(z/p)\right)\pmod{p}.
\end{align*}	
\end{theorem}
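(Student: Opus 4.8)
The plan is to reduce everything to the explicit coefficient formula \eqref{Tp} and to exploit the hypothesis $k>1$ in order to discard the summand carrying the factor $p^{\ell-1}$. First I would write $g(z)=\sum_{n\ge0}a(n)q^n$ and, using the assumption $h\in\mathbb{Z}[[q^p]]$, write $h(z)=\sum_{m\ge0}b(m)q^{pm}$. The key preliminary observation concerns the effect of the substitution $z\mapsto z/p$: since $q=e^{2\pi i z}$, replacing $z$ by $z/p$ sends each $q^{pm}$ to $q^{m}$, so that $h(z/p)=\sum_{m\ge0}b(m)q^{m}$ at the level of $q$-expansions.

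Next I would record the fundamental simplification that holds modulo $p$. For any $q$-series $f(z)=\sum_{n\ge0}a(n)q^n$ regarded as a form of weight $\ell=k>1$, the second summand $\chi(p)p^{\ell-1}a(n/p)$ in \eqref{Tp} is divisible by $p$, because $\ell-1=k-1\ge1$. Hence $f(z)\,|\,T_p\equiv\sum_{n\ge0}a(pn)q^n\pmod p$, and this holds independently of the Nebentypus character $\chi$. This is the only place where the hypothesis $k>1$ enters, and it is exactly what makes the character irrelevant modulo $p$.

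With these two facts the theorem becomes a direct comparison of coefficients. On the left-hand side, writing $g(z)h(z)=\sum_{N\ge0}c(N)q^N$ with $c(N)=\sum_{m\ge0}a(N-pm)b(m)$, the simplification above gives $\left(g(z)h(z)\right)\,|\,T_p\equiv\sum_{n\ge0}c(pn)q^n\pmod p$, where $c(pn)=\sum_{m\ge0}a(p(n-m))b(m)$. On the right-hand side, the same simplification gives $g(z)\,|\,T_p\equiv\sum_{n\ge0}a(pn)q^n\pmod p$, and multiplying this by $h(z/p)=\sum_{m\ge0}b(m)q^{m}$ produces the coefficient $\sum_{m\ge0}a(p(n-m))b(m)$ in front of $q^{n}$. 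Since the two expressions agree term by term, the asserted congruence follows.

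There is no genuine obstacle here; the result is a formal manipulation of the defining formula for $T_p$, which is why the excerpt remarks that it follows directly from \eqref{Tp}. The only points requiring a little care are bookkeeping the convolution indices correctly and being explicit about the substitution $z\mapsto z/p$ on $q$-expansions, so that the factor of $p$ in the exponent of $h$ is cancelled precisely by the $p$ introduced when $T_p$ selects the coefficients $a(pn)$ from $g$.
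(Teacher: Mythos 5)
Your proof is correct and matches the paper's intent: the paper offers no written proof, merely asserting that the result "follows directly from \eqref{Tp}," and your coefficient-by-coefficient verification (discarding the $\chi(p)p^{\ell-1}$ term modulo $p$ since $\ell-1=k-1\geq 1$, then matching the convolution sums on both sides) is precisely that direct argument spelled out.
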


We also need the following result in Section \ref{sec:density}.
\begin{theorem} \label{thm_ono2} \cite[pp. 43]{ono2004web}
Let $g(z) \in M_\ell(\Gamma_0(N), \chi)$ has a Fourier expansion 
$$g(z)=\sum_{n=0}^{\infty}b(n)q^n \in \mathbb{Z}[[q]],$$
Then for a positive integer $r$, there is a constant $\alpha>0$  such that
$$ \# \left\{0<n\leq X: b(n)\not\equiv 0 \pmod{r} \right\}= \mathcal{O}\left(\frac{X}{(\log{}X)^{\alpha}}\right).$$
Equivalently $$\lim_{X \to \infty} \frac{\# \left\{0<n\leq X: b(n)\not\equiv 0 \pmod{r} \right\}}{X} = 0.$$
\end{theorem}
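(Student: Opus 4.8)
The plan is to realize the generating function \eqref{genfn_bar ac} of $\bar a_{2^\alpha m}(n)$, after reduction modulo $3^j$, as a holomorphic modular form of positive integer weight on some $\Gamma_0(N)$ with $N\in\{1,3,5,15,17\}$, and then to feed this form into the Ono--Taguchi local-nilpotency theorem \cite{Ono-Taguchi2005} for the prime $3$. Writing $c=2^\alpha m$, I would first replace the fractional, negative-weight eta-quotient $\frac{f_4^{c-1}}{f_1^2 f_2^{2c-3}}$ by an honest eta-quotient congruent to it modulo $3^j$. The engine here is the Frobenius congruence $f_\delta^{3^k}\equiv f_{3\delta}^{3^{k-1}}\pmod{3^k}$, together with its reciprocal form $\frac{1}{f_\delta^{3^k}}\equiv \frac{1}{f_{3\delta}^{3^{k-1}}}\pmod{3^k}$, which lets me trade the offending denominator factors $f_1^2$ and $f_2^{2c-3}$ for high powers of $f_1,f_2$ in the numerator at the cost of introducing $f_3,f_6,\dots$ in the denominator. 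Taking the auxiliary exponents to be multiples of $3^{j-1}$ pushes the weight positive and clears the denominator modulo $3^j$, producing an eta-quotient $F_j(z)=\prod_{\delta\mid N}\eta(\delta z)^{r_\delta}$ with $F_j(z)\equiv \sum_n \bar a_c(n)\,q^{n+\beta/24}\pmod{3^j}$ for a suitable shift $\beta$.

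Next I would verify, using Theorem \ref{thm_ono1}, that $F_j$ is genuinely a holomorphic modular form of integer weight $\ell=\tfrac12\sum_\delta r_\delta>1$ with Nebentypus on $\Gamma_0(N)$. This amounts to checking the two divisibility conditions $\sum_\delta \delta r_\delta\equiv 0$ and $\sum_\delta (N/\delta) r_\delta\equiv 0\pmod{24}$ and confirming nonnegative order of vanishing at every cusp $c/d$ via the cusp formula in Theorem \ref{thm_ono1}. The role of the hypothesis $N\in\{1,3,5,15,17\}$ is that these are precisely the levels for which the Serre--Tate/Ono--Taguchi Hecke-nilpotency modulo $3$ is available, so a central part of the argument is to arrange the exponents $r_\delta$ (which depend on $\alpha,m,$ and $j$) so that the resulting level is one of these admissible values.

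With $F_j\in M_\ell(\Gamma_0(N),\chi)$ in hand, I would invoke the local nilpotency of the Hecke algebra modulo $3$: there is an integer $i\geq 0$, depending on $\alpha,m$ but not on $j$, such that for all $j\geq 1$ and all odd primes $q_1,\dots,q_{i+j}\nmid N$ one has $F_j\mid T_{q_1}\mid\cdots\mid T_{q_{i+j}}\equiv 0\pmod{3^j}$. Finally I would extract coefficients. By the formula \eqref{Tp} for $T_q$ and Theorem \ref{theorem_hecke}, if $n$ is coprime to $q_1\cdots q_{i+j}$ then the only surviving branch in the iterated Hecke image is the leading one, so the $n$-th coefficient of $F_j\mid T_{q_1}\cdots T_{q_{i+j}}$ equals the $(q_1\cdots q_{i+j}\,n)$-th coefficient of $F_j$, namely $\bar a_c\!\big(q_1\cdots q_{i+j}\,n/24\big)$ whenever $24\mid q_1\cdots q_{i+j}\,n$ (and $0$ otherwise, consistent with the convention). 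The nilpotency then yields $\bar a_{2^\alpha m}(q_1\cdots q_{i+j}\,n/24)\equiv 0\pmod{3^j}$, and the remaining $n$ not coprime to the product follow by a short induction on the number of prime factors, applying the same nilpotency statement with fewer Hecke operators.

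The main obstacle is the construction of the first step together with its interaction with the level constraint: I must produce, uniformly in the two-parameter family $c=2^\alpha m$ and in $j$, an eta-quotient congruent to the generating function modulo $3^j$ that is simultaneously holomorphic, of positive integer weight, and of one of the admissible levels $N\in\{1,3,5,15,17\}$. Balancing the Frobenius exponents so that the cusp orders from Theorem \ref{thm_ono1} remain nonnegative while keeping the level admissible is delicate, and one must also track the residue of the $q$-support modulo $24$ carefully so that the extracted coefficient is exactly $\bar a_c$ evaluated at $q_1\cdots q_{i+j}\,n/24$ with no stray cross terms. Once this reduction is correctly set up, the nilpotency input and the coefficient extraction are routine.
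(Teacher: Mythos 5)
Your proposal does not address the statement it is supposed to prove. Theorem \ref{thm_ono2} is a general result (due to Serre, and cited by the paper from \cite[p.~43]{ono2004web} without proof) about an \emph{arbitrary} integer-weight form $g(z)=\sum_{n\geq 0}b(n)q^n \in M_\ell(\Gamma_0(N),\chi)\cap\mathbb{Z}[[q]]$ and an \emph{arbitrary} positive modulus $r$: it asserts the quantitative density bound $\#\{0<n\leq X: b(n)\not\equiv 0 \pmod r\}=\mathcal{O}\left(X/(\log X)^{\alpha}\right)$. What you have sketched instead is a proof of Theorem \ref{last thm}: you build an eta-quotient congruent modulo $3^j$ to the generating function of $\bar a_{2^\alpha m}$, verify holomorphy, weight, and level via Theorem \ref{thm_ono1}, invoke the Ono--Taguchi nilpotency theorem (Theorem \ref{thm_Ono-Taguchi}), and extract coefficients supported on $q_1\cdots q_{i+j}\,n$. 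Nowhere in your argument is there an arbitrary form $g$, an arbitrary modulus $r$, or any estimate of the shape $\mathcal{O}\left(X/(\log X)^{\alpha}\right)$; so as a proof of Theorem \ref{thm_ono2} it is vacuous, however reasonable it may be as a sketch of Theorem \ref{last thm} (where it indeed matches the strategy of Section \ref{sec:density}, which reuses the form $B_{\alpha,m,k}$ constructed in the proof of Theorem \ref{Lacunary modulo 3^k}).

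Beyond the mismatch of statements, the tools you invoke cannot yield Theorem \ref{thm_ono2} even in principle. Hecke nilpotency modulo $3$ (or modulo $2$) is available only for special primes and levels, and it produces vanishing of coefficients along sparse index sets of the form $q_1\cdots q_{i+j}\,n$; it says nothing about a modulus $r$ divisible by primes $p\geq 5$, and it does not by itself give a density-one divisibility statement with a $\log$-power saving. The actual proof of Theorem \ref{thm_ono2} is of a different nature: one reduces to Hecke eigenforms, uses the Deligne--Serre Galois representations attached to them together with the Chebotarev density theorem to show that a positive proportion of primes $p$ force divisibility of $b(np)$, and then runs a Landau-type sieve/counting argument to get the bound $\mathcal{O}\left(X/(\log X)^{\alpha}\right)$. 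Since the paper treats this as a black box citation, the correct ``proof'' here is either that citation or a genuine reproduction of Serre's argument; your construction of a specific eta-quotient for $\bar a_{2^\alpha m}$ accomplishes neither.
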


\subsection{Radu's Algorithm} We also need some results which describe an algorithmic approach to proving partition concurrences, developed by Radu \cite{Radu2009}. For integers $M \geq 1$, suppose that $R(M)$ be the set of all the integer sequences $$(r_\delta) := (r_{\delta_1}, r_{\delta_2}, r_{\delta_3}, \ldots, r_{\delta_k})$$ indexed by all the positive divisors $\delta$ of $M$, where $1=\delta_1<\delta_2< \cdots <\delta_k=M$. 
For positive integers $m$, and any integer $s$, let $[s]_m$ denote the residue class of $s$ in $\mathbb{Z}_m:= \mathbb{Z}/ {m\mathbb{Z}}$. 
Let $\mathbb{Z}_m^{*}$ be the set of all invertible elements in $\mathbb{Z}_m$. Let $\mathbb{S}_m\subseteq\mathbb{Z}_m$  be the set of all squares in $\mathbb{Z}_m^{*}$. For $(r_\delta) \in R(M)$ and $t\in\{0, 1, \ldots, m-1\}$, we define a subset $P(t)\subseteq\{0, 1, \ldots, m-1\}$ by
\begin{align} \label{formulae for P(t)}
P(t):=&\left\{t' \in \{0, 1, \ldots, m-1\} : t'\equiv ts+\frac{s-1}{24}\sum_{\delta|M}\delta r_\delta \pmod{m},\right.\nonumber \\ &\quad \left.\text{ for some } [s]_{24m}\in \mathbb{S}_{24m}  \right\}.
\end{align}

Let $m, M, N$ be positive integers. For $\gamma=
\begin{bmatrix}
a  &  b \\
c  &  d     
\end{bmatrix} \in \Gamma$, $(r_\delta)\in R(M)$ and $(r'_\delta)\in R(N)$, we also define 
\begin{align*}
p(\gamma):=\min_{\lambda\in\{0, 1, \ldots, m-1\}}\frac{1}{24}\sum_{\delta|M}r_{\delta}\frac{\gcd(\delta (a+ k\lambda c), mc)^2}{\delta m}
\end{align*}
and 
\begin{align*}
p'(\gamma):=\frac{1}{24}\sum_{\delta|N}r'_{\delta}\frac{\gcd(\delta, c)^2}{\delta}.
\end{align*}

Suppose $m, M$ and $N$ are positive integers, $t\in \{0, 1, \ldots, m-1\}$ and $(r_{\delta})\in R(M)$. Let $k:=\gcd(m^2-1,24)$ and define the set $\Delta^{*}$ consists of all tuples $(m, M, N, t, (r_{\delta}))$ such that all of the following conditions are satisfied:
\begin{enumerate}
\item Prime divisors of $m$ are also prime divisors of $N$.
\item If $\delta|M$, then $\delta|mN$ for every $\delta\geq1$ with $r_{\delta} \neq 0$.
\item $24 | kN \sum_{\delta|M} \dfrac{r_{\delta} mN}{\delta}$.
\item $8 | kN \sum_{\delta | M} r_{\delta}$.  
\item  $\dfrac{24m}{\gcd{(-24kt-k{\sum_{{\delta}|M}}{\delta r_{\delta}}} , 24m)} | N$.
\item If $2|m$, then either $4|kN$ and $8|sN$ or $2|s$ and $8|(1-j)N$, where $\prod_{\delta|M}\delta^{|r_{\delta}|}=2^s\cdot j$, here $s$ and $j$  are nonnegative integers with $j$ odd.
\end{enumerate} 

We have the following two results useful for us in the next section.
\begin{lemma} \cite[Lemma 4.5]{Radu2009}\label{lemma (Radu2009)} 
Suppose that $(m, M, N, t, (r_{\delta}))\in\Delta^{*}$, $(r'_{\delta}) :=(r'_{\delta})_{\delta|N}\in R(N)$, $\{\gamma_1,\gamma_2, \ldots, \gamma_n\}\subseteq \Gamma$ is a complete set of representatives of the double cosets of $\Gamma_{0}(N) \backslash \Gamma/ \Gamma_\infty$, and $t_{min}=\min_{t' \in P(t)} t'$, 
\begin{align*}
\nu:= \frac{1}{24}\left\{ \left( \sum_{\delta|M}r_{\delta}+\sum_{\delta|N}r'_{\delta}\right)[\Gamma:\Gamma_{0}(N)] -\sum_{\delta|N} \delta r'_{\delta}-\frac{1}{m}\sum_{\delta|M}\delta r_{\delta}\right\} - \frac{ t_{min}}{m},
\end{align*} 
$p(\gamma_j)+p'(\gamma_j) \geq 0$ for all $1 \leq j \leq n$, and $\sum_{n=0}^{\infty} A(n) q^n := \prod_{\delta | M} f_\delta ^{r_\delta}$. If for some integers $u \geq 1$, all $t' \in P(t)$, and $0\leq n\leq \lfloor\nu\rfloor$, $A(mn+t') \equiv 0 \pmod u$ is true, then for integers $n \geq 0$ and all $t' \in P(t)$, we have $A(mn+t') \equiv 0 \pmod u$. 
\end{lemma}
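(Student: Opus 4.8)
The plan is to manufacture, out of the arithmetic progressions $\{mn+t' : t'\in P(t)\}$, a single \emph{holomorphic} modular form on $\Gamma_0(N)$ and then invoke Sturm's bound (Theorem~\ref{Sturm}) to upgrade the finitely many hypotheses $A(mn+t')\equiv 0\pmod u$ into the full family. First I would write $\sum_{n\ge0}A(n)q^n=\prod_{\delta|M}\prod_{n\ge1}(1-q^{\delta n})^{r_\delta}$, so that $\prod_{\delta|M}\eta(\delta\tau)^{r_\delta}=q^{\frac1{24}\sum_{\delta|M}\delta r_\delta}\sum_n A(n)q^n$, and apply a $U_m$-type sieving operator that retains only the terms whose exponent lies in the residues $t'\in P(t)\pmod m$. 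The reason the sieve must run over the whole set $P(t)$, and not the single class $t$, is structural: a matrix of $\Gamma_0(N)$ acting on the normalized eta-quotient permutes the progressions by multiplying the residue by a square in $\mathbb{Z}_{24m}^{*}$ and shifting by $\frac{s-1}{24}\sum_{\delta|M}\delta r_\delta$, which is exactly the recipe in \eqref{formulae for P(t)}. Thus $P(t)$ is the smallest union of classes stable under this action, and only the sum $\sum_{t'\in P(t)}$ transforms coherently.

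Next I would multiply the sieved series by the auxiliary eta-quotient $\prod_{\delta|N}\eta(\delta\tau)^{r'_\delta}$ and call the product $F(\tau)$. The role of the six conditions defining $\Delta^{*}$ is precisely to force $F$ into $M_\ell(\Gamma_0(N),\chi)$ for the weight $\ell=\frac12\bigl(\sum_{\delta|M}r_\delta+\sum_{\delta|N}r'_\delta\bigr)$ and an explicit Nebentypus $\chi$: conditions (3)--(4) encode the integer-weight and $24$-divisibility requirements of Theorem~\ref{thm_ono1}, condition (6) handles the delicate $2\mid m$ parity, and conditions (1),(2),(5) control the interaction of the sieve with the level so that the transformation law survives the $U_m$ operation. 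At this stage $F$ is at worst weakly holomorphic. Holomorphy at every cusp is then exactly the hypothesis $p(\gamma_j)+p'(\gamma_j)\ge0$: here $p(\gamma_j)$ is the (normalized) order of vanishing of the sieved first eta-quotient at the cusp represented by $\gamma_j$ and $p'(\gamma_j)$ that of $\prod_{\delta|N}\eta(\delta\tau)^{r'_\delta}$, both computed from the cusp-order formula in Theorem~\ref{thm_ono1}; their sum is $\operatorname{ord}_{\gamma_j}F$, so nonnegativity makes $F$ a genuine holomorphic modular form lying in a finite-dimensional space.

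Finally I would read off that the $q$-expansion of $F$ equals $q^{\kappa}$ times a $\mathbb{Z}$-linear combination of the series $\sum_n A(mn+t')q^{n}$ over $t'\in P(t)$, where the leading exponent $\kappa$ is governed by $t_{\min}=\min_{t'\in P(t)}t'$ together with the $q$-shifts from the two eta-quotients. A direct computation identifies $\lfloor\nu\rfloor$ with the integer part of the Sturm bound $\frac{\ell}{12}[\Gamma:\Gamma_0(N)]$ corrected by this leading exponent: the three subtracted terms $\frac1{24}\sum_{\delta|N}\delta r'_\delta$, $\frac1{24m}\sum_{\delta|M}\delta r_\delta$, and $\frac{t_{\min}}{m}$ in the definition of $\nu$ are exactly these shifts, while the first term $\frac1{24}\bigl(\sum_{\delta|M}r_\delta+\sum_{\delta|N}r'_\delta\bigr)[\Gamma:\Gamma_0(N)]$ is the Sturm bound itself. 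Hence, by Theorem~\ref{Sturm}, verifying $A(mn+t')\equiv0\pmod u$ for $0\le n\le\lfloor\nu\rfloor$ and all $t'\in P(t)$ forces every coefficient of $F$ to vanish modulo $u$, which is the asserted congruence for all $n\ge0$.

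I expect the main obstacle to be establishing the modularity of the sieved object and pinning down its weight, level, and character exactly: showing that the $U_m$-sieve onto $P(t)$ intertwines correctly with the $\Gamma_0(N)$-transformation law is the technical heart, and it is where all six conditions of $\Delta^{*}$ are consumed. A secondary but still delicate point is proving that $p(\gamma)+p'(\gamma)$ genuinely equals $\operatorname{ord}_{\gamma}F$ at each cusp---the minimum over $\lambda$ in the definition of $p(\gamma)$ reflecting the worst-case width contribution---and that the Sturm count matches $\lfloor\nu\rfloor$ even with the fractional shift $\frac{t_{\min}}{m}$ present.
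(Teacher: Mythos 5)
The first thing to note is that the paper contains no proof of this lemma at all: it is imported verbatim from \cite[Lemma 4.5]{Radu2009} and used as a black box in the proof of Theorem \ref{THM2}, so your attempt can only be measured against Radu's original argument. Against that benchmark, your architecture is the correct one and matches Radu's: slice the eta-quotient onto the progressions indexed by the orbit $P(t)$ (and you correctly explain why the orbit under $t\mapsto ts+\frac{s-1}{24}\sum_{\delta|M}\delta r_\delta$ with $[s]_{24m}\in\mathbb{S}_{24m}$ is forced on us), multiply by $\prod_{\delta|N}\eta(\delta\tau)^{r'_\delta}$, use the $\Delta^{*}$ conditions to salvage the transformation law, use $p(\gamma_j)+p'(\gamma_j)\geq 0$ for holomorphy at the cusps (one representative per double coset suffices), and read $\nu$ as the Sturm-type bound $\frac{1}{24}\left(\sum_{\delta|M}r_\delta+\sum_{\delta|N}r'_\delta\right)[\Gamma:\Gamma_0(N)]$ corrected by the three leading-exponent shifts. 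That bookkeeping is exactly right.

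However, two concrete gaps would stop your sketch from compiling into a proof as written. (i) You finish by invoking Theorem \ref{Sturm}, which is stated for \emph{integral} weight forms on $\Gamma_0(N)$ with integral $q$-expansions and no multiplier. Your form $F$ has weight $\frac{1}{2}\left(\sum_{\delta|M}r_\delta+\sum_{\delta|N}r'_\delta\right)$, which need not be an integer: in this paper's own application of the lemma (proof of Theorem \ref{THM2}) one has $\sum_{\delta|M}r_\delta+\sum_{\delta|N}r'_\delta=39+18=57$, i.e.\ weight $57/2$, and $F$ expands in fractional powers of $q$ with a nontrivial multiplier. So Sturm's theorem as stated cannot be cited; Radu has to prove (and does prove) a valence-formula bound valid for this more general automorphy, which is precisely why the lemma, unlike Theorem \ref{Sturm}, carries no parity hypothesis on the exponents. (ii) Your final inference, from ``all coefficients of $F$ vanish modulo $u$'' to ``$A(mn+t')\equiv 0\pmod u$ for all $n$ and all $t'\in P(t)$,'' is not automatic for a linear combination of slices: one needs that distinct $t',t''\in P(t)$ contribute to distinct exponent classes modulo $\mathbb{Z}$ (since $(t'-t'')/m\notin\mathbb{Z}$), so the slices can never cancel against one another, and then that the $\eta$-quotient prefactor has leading coefficient $1$ and hence can be stripped off modulo $u$ by induction on the coefficient index. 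Without this separation step the conclusion genuinely does not follow, particularly for composite $u$. Both points are handled in Radu's paper; and the modularity of the orbit sum itself, which you explicitly defer, is of course the main content of his argument rather than a detail.
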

\begin{lemma}\cite[Lemma 2.6]{Radu-Sellers2011} \label{lemma (Radu-Sellers2011)}
If $N$ or $\frac{1}{2}N$ is a square-free integer, then
		\begin{align*}
		\bigcup_{\delta|N}\Gamma_0(N)\begin{bmatrix}
		1  &  0 \\
		\delta  &  1      
		\end{bmatrix}\Gamma_ {\infty}=\Gamma.
		\end{align*}
\end{lemma}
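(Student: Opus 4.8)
The plan is to translate the asserted identity of double cosets into a statement about the cusps of $\Gamma_0(N)$ and then exploit the squarefreeness hypothesis through a divisor count. Write $M_\delta := \begin{bmatrix} 1 & 0 \\ \delta & 1 \end{bmatrix}$, and recall that $\Gamma$ acts transitively on $\mathbb{P}^1(\mathbb{Q}) = \mathbb{Q}\cup\{\infty\}$ by Möbius transformations, with the stabiliser of $\infty$ equal to $\pm\Gamma_\infty$. Since $-I\in\Gamma_0(N)$, the assignment $\gamma\mapsto \gamma(\infty)$ descends to a bijection between the double cosets $\Gamma_0(N)\backslash\Gamma/\Gamma_\infty$ and the cusps $\Gamma_0(N)\backslash\mathbb{P}^1(\mathbb{Q})$ (the sign is absorbed by $-I$ in the injectivity check). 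Under this bijection $M_\delta$ corresponds to the cusp $M_\delta(\infty) = 1/\delta$, and a short determinant computation shows $M_\delta\Gamma_\infty=\left\{\begin{bmatrix} 1 & n \\ \delta & \delta n+1 \end{bmatrix}: n\in\mathbb{Z}\right\}$, i.e.\ exactly the matrices with first column $(1,\delta)^{\mathsf T}$. Hence the claimed equality is equivalent to: every cusp of $\Gamma_0(N)$ is $\Gamma_0(N)$-equivalent to $1/\delta$ for some divisor $\delta\mid N$.

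First I would invoke the classical description of the cusps of $\Gamma_0(N)$: every cusp has a representative $a/c$ with $c\mid N$ and $\gcd(a,c)=1$, and two such representatives $a/c,\ a'/c'$ are $\Gamma_0(N)$-equivalent precisely when $c=c'$ and $a\equiv a'\pmod{\gcd(c,N/c)}$. Consequently the number of inequivalent cusps with denominator $c$ equals $\varphi(\gcd(c,N/c))$. This classification is the technical backbone of the argument; if a self-contained proof is preferred, it can be established by analysing the $\Gamma_0(N)$-action (by left multiplication) on the primitive integer column vectors $(a,c)^{\mathsf T}$, which are the first columns of elements of $\Gamma$, and computing via Bézout's identity and the Chinese Remainder Theorem that two such vectors lie in one orbit exactly under the stated congruence condition.

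The key step is then to show that the hypothesis forces $\varphi(\gcd(c,N/c))=1$ for every divisor $c\mid N$. Working prime by prime, $\gcd(c,N/c)=\prod_{p}p^{\min(v_p(c),\,v_p(N)-v_p(c))}$. If $N$ is squarefree then $v_p(N)\le 1$ for all $p$, so each exponent $\min(v_p(c),1-v_p(c))$ vanishes and $\gcd(c,N/c)=1$. If instead $N/2$ is squarefree, then $v_2(N)\le 2$ while $v_p(N)\le 1$ for odd $p$; the odd primes contribute $1$ as before, and the prime $2$ contributes $2^{\min(v_2(c),\,2-v_2(c))}\in\{1,2\}$. In every case $\gcd(c,N/c)\in\{1,2\}$, whence $\varphi(\gcd(c,N/c))=1$.

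Finally I would assemble the pieces. For each $c\mid N$ there is, by the count above, exactly one cusp with denominator $c$; since $\gcd(1,c)=1$ and $1$ represents the unique admissible residue class modulo $\gcd(c,N/c)$ coprime to $c$ (when $\gcd(c,N/c)=2$ the denominator $c$ is even, forcing every admissible numerator to be odd, hence $\equiv 1\pmod 2$), that cusp is $1/c$. Therefore $\{1/\delta:\delta\mid N\}$ is a complete, irredundant list of the cusps of $\Gamma_0(N)$, and pulling back through the bijection shows the double cosets $\Gamma_0(N)M_\delta\Gamma_\infty$, $\delta\mid N$, cover all of $\Gamma$. The main obstacle is the cusp-equivalence criterion of the first step: the appearance of $\gcd(c,N/c)$ rather than $\gcd(c,N)$ or $c$ itself is exactly what encodes the squarefree hypothesis, and checking it carefully at the prime $2$ is where the case $N/2$ squarefree genuinely differs from $N$ squarefree.
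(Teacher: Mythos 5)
Your proposal cannot be compared with an internal argument, because the paper contains no proof of this lemma at all: it is imported verbatim, with citation, from Radu and Sellers (2011). Judged on its own terms, your proof is correct. The dictionary between double cosets $\Gamma_0(N)\backslash\Gamma/\Gamma_\infty$ and cusps $\Gamma_0(N)\backslash\mathbb{P}^1(\mathbb{Q})$ is set up properly (the stabiliser of $\infty$ in $\Gamma$ is $\pm\Gamma_\infty$, and the sign is correctly absorbed using $-I\in\Gamma_0(N)$); the reduction of the lemma to ``every cusp is $\Gamma_0(N)$-equivalent to $1/\delta$ for some $\delta\mid N$'' is exactly right; and the prime-by-prime computation showing that the hypothesis forces $\gcd(c,N/c)\in\{1,2\}$, hence $\varphi(\gcd(c,N/c))=1$, is correct, including the only delicate point, at $p=2$, and the observation that an admissible numerator is automatically odd when $\gcd(c,N/c)=2$, so the unique class is that of $1/c$. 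The one substantive external input is the classical equivalence criterion for cusps of $\Gamma_0(N)$ (representatives $a/c$ with $c\mid N$, $\gcd(a,c)=1$; equivalence iff the denominators agree and $a\equiv a'\pmod{\gcd(c,N/c)}$); this is standard (Diamond--Shurman, Iwaniec), and you flag it honestly together with a sketch of how to derive it from the $\Gamma_0(N)$-action on primitive vectors, which is an acceptable level of citation for a lemma of this type. A bonus of your route, compared with the more computational matrix-manipulation style of argument in the cited source, is that it exposes the exact scope of the statement: the conclusion holds if and only if $\gcd(c,N/c)\le 2$ for every $c\mid N$, so it also covers cases outside the stated hypothesis (for instance $N=8$), and it makes transparent why ``$N$ or $\tfrac{1}{2}N$ squarefree'' is sufficient.
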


\section{Proof of Theorems \ref{THM1} and \ref{THM2}}\label{sec:isolated}

\begin{proof}[Proof of Theorem \ref{THM1}]
Since the proofs of the congruences are similar, we only prove \eqref{37_Th1} in detail. We consider the $\eta$-quotient 
$$G_1(z) := \frac{\eta(z)^{816}}{\eta(2z)^{36}}.$$  
Using Theorem \ref{thm_ono1}, we see that $G_1 (z)$ is a modular form of weight $390$, and level $4$, so $G_1 (z) \in {M}_{390} (\Gamma_0 (4))$. From the definition of $a_c (n)$, we have 
$$G_1 (z) = \frac{q^{31}}{f_1 f_2 ^{36}} \cdot f_1 ^{817} = \left(\sum_{n=0}^{\infty} a_{37} (n) q^{n+31}\right) \cdot f_1 ^ {817}.$$
Applying the Hecke operator $T_{43}$ on $G_1 (z)$, we calculate that 
$$G_1 (z) \mid T_{43} \equiv \left(\sum_{n=0}^{\infty} a_{37} (43n + 12) q^{n+1}\right) \cdot f_1 ^{817} \pmod {43}.$$ 
We know that $G_1 (z) \mid T_{43} \in {M}_{390} (\Gamma_0 (4))$, By Theorem \ref{Sturm}, the Sturm bound for this space of forms is $195$. Using Mathematica, we verify that the coefficients of $q$-expansion of $G_1 (z) \mid T_{43}$ up to the desired bound are congruent to $0$ $\pmod{43}$. Thus,  $G_1 (z) \mid T_{43} \equiv 0 \pmod {43}$, which proves \eqref{37_Th1}.

For the other congruences in Theorem \ref{THM1}, we mention the relevant data required to prove them in the table below.
\begin{align*}
    \begin{tabular}{c c c c c}
        \toprule
        Congruences  & $\eta$-quotient & Level  &  Weight & Sturm bound \\  
        \midrule
        \eqref{41_Th1} & $\dfrac{\eta(z)^{704}}{\eta(2z)^{40}}$ & 2 & 332 & 83 \\
        \midrule
        \eqref{53_Th1} & $\dfrac{\eta(z)^{176}}{\eta(2z)^{52}}$ & 4 & 62  & 31 \\
        \midrule
        \eqref{61_Th1} & $\dfrac{\eta(z)^{1272}}{\eta(2z)^{60}}$ & 4 & 606 & 303 \\
        \midrule
        \eqref{65_Th1} & $\dfrac{\eta(z)^{1064}}{\eta(2z)^{64}}$ & 2 & 500 & 125  \\ 
        \midrule
        \eqref{73_Th1} & $\dfrac{\eta(z)^{552}}{\eta(2z)^{72}}$ & 2 & 240 & 60  \\ 
        \midrule
        \eqref{77_Th1} & $\dfrac{\eta(z)^{248}}{\eta(2z)^{76}}$ & 4 & 86  & 43 \\
        \bottomrule
    \end{tabular}
\end{align*}
\end{proof}

\begin{proof}[Proof of Theorem \ref{THM2}]
We notice
\[
\sum_{n\geq 0}a_3(n)q^n=\frac{1}{f_1f_2^2} \equiv \frac{f_1^{48}}{f_2^2f_7^7} \pmod{49}.
\]
So, we choose $(m,M,N,t,(r_\delta))=(49,14,14,39,(48,-2,-7,0))$. 
It easy to verify that this choice of $(m, M, N, t, (r_{\delta}))$ satisfies the $\Delta^{*}$ conditions and from equation \eqref{formulae for P(t)} we see that $P(t)=\{39\}$. By Lemma \ref{lemma (Radu-Sellers2011)}, we know that 
$\left\{
\begin{bmatrix}
1  &  0 \\
\delta  &  1      
\end{bmatrix}:\delta|14 \right\}$ forms a complete set of double coset representatives of $\Gamma_{0}(N) \backslash \Gamma/ \Gamma_\infty$.
Let $\gamma_{\delta}=
\begin{bmatrix}
1  &  0 \\
\delta  &  1      
\end{bmatrix}$, and for the choice of $(r'_\delta ) = (18,0,0,0)$ we see that 
$$p(\gamma_{\delta})+p'(\gamma_{\delta}) \geq 0, \textrm{\ \ \ for all } \delta | N.$$ By Lemma \ref{lemma (Radu2009)}, we compute $\lfloor\nu\rfloor=57$. 
Using Mathematica we verify \eqref{3 thm2} for $n\leq 57$, which proves the result.
\end{proof}

\section{Proof of Theorem \ref{bar a_c (n) equiv 4}}\label{sec:character}

Let 
\[
\bar F_c(q)_:=\sum_{n\geq 0}\bar a_c(n)q^n.
\]
From \eqref{genfn_bar ac}, we can easily prove that 
\begin{equation} \label{functional equation F_c (q)}
\bar F_c(q) = \varphi(q)\varphi(q^2)^{c-1} \bar F_c(q^2)^2,
\end{equation}
where 
$$
\varphi(q): = \sum_{k = - \infty}^{\infty} q^{k^2} = \frac{f_2^5}{f_1^2 f_4^2}
$$ 
is Ramanujan's $\varphi$-function.  We can then iterate (\ref{functional equation F_c (q)}) to prove the next lemma.

\begin{lemma} \label{theorem for overline F_(c-1)(q)}
Let $c \geq 3$ be an integer. Then 
    $$\overline{F}_{c-1}(q) = \varphi(q)\prod_{i\geq 1}\varphi(q^{2^i})^{c\cdot 2^{i-1}}. $$ 
\end{lemma}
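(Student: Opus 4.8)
The plan is to iterate the functional equation \eqref{functional equation F_c (q)} and then pass to a limit in the ring of formal power series. First I would specialize \eqref{functional equation F_c (q)} to index $c-1$ (legitimate since $c\geq 3$ makes the exponent $c-2\geq 1$ nonnegative), obtaining $\overline{F}_{c-1}(q) = \varphi(q)\,\varphi(q^2)^{c-2}\,\overline{F}_{c-1}(q^2)^2$. This is exactly the $n=1$ instance of the finite-stage identity
\[
\overline{F}_{c-1}(q) = \varphi(q)\,\prod_{i=1}^{n-1}\varphi(q^{2^i})^{c\cdot 2^{i-1}}\cdot \varphi(q^{2^n})^{(c-2)2^{n-1}}\cdot \overline{F}_{c-1}(q^{2^n})^{2^n},
\]
which I would establish for every $n\geq 1$ by induction on $n$.

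For the inductive step, I would apply \eqref{functional equation F_c (q)} (at index $c-1$) with $q$ replaced by $q^{2^n}$, namely $\overline{F}_{c-1}(q^{2^n}) = \varphi(q^{2^n})\,\varphi(q^{2^{n+1}})^{c-2}\,\overline{F}_{c-1}(q^{2^{n+1}})^2$, raise both sides to the power $2^n$, and substitute into the identity at stage $n$. The only bookkeeping required is the merging of the two factors carrying $\varphi(q^{2^n})$: their exponents combine as $(c-2)2^{n-1}+2^n = c\cdot 2^{n-1}$, which is precisely what converts the \emph{transient} exponent $(c-2)2^{n-1}$ at level $n$ into the \emph{stable} exponent $c\cdot 2^{n-1}$ appearing in the target product, while producing a fresh transient factor $\varphi(q^{2^{n+1}})^{(c-2)2^n}$ at level $n+1$. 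This yields the stage-$(n+1)$ identity and closes the induction.

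Finally I would let $n\to\infty$. Since $\varphi(q)=1+O(q)$ and $\overline{F}_{c-1}(q)=1+O(q)$ have constant term $1$, the tail factor $\varphi(q^{2^{n+1}})^{(c-2)2^n}\,\overline{F}_{c-1}(q^{2^{n+1}})^{2^{n+1}}$ equals $1+O(q^{2^{n+1}})$, and likewise every product factor $\varphi(q^{2^i})^{c\cdot 2^{i-1}}$ with $2^i> M$ is $\equiv 1 \pmod{q^{M}}$. Hence for any fixed $M$, choosing $n$ with $2^{n+1}>M$ shows that the coefficients of $q^m$ for $m<M$ in $\overline{F}_{c-1}(q)$ already agree with those of $\varphi(q)\prod_{i\geq 1}\varphi(q^{2^i})^{c\cdot 2^{i-1}}$, and letting $M\to\infty$ gives the claimed identity. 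I expect the only genuinely delicate point to be this last step: one must argue that the infinite product converges $q$-adically and that the tail $\overline{F}_{c-1}(q^{2^{n+1}})^{2^{n+1}}$ truly tends to $1$ despite the growing exponent $2^{n+1}$. This is immediate once one observes that raising a series of the form $1+O(q^{2^{n+1}})$ to any power again leaves a series of the form $1+O(q^{2^{n+1}})$, so no cancellation can occur in the low-order coefficients.
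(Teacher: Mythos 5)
Your proposal is correct and takes essentially the same approach as the paper: both iterate the functional equation \eqref{functional equation F_c (q)} at index $c-1$, with the key bookkeeping being that the transient exponent $(c-2)2^{n-1}$ on $\varphi(q^{2^n})$ absorbs the incoming $2^n$ to become the stable exponent $c\cdot 2^{n-1}$. The paper merely displays the first few iterations and concludes with ``continuing to iterate indefinitely,'' whereas you formalize the finite-stage identity by induction and justify the $q$-adic limit; this is a tightening of the same argument, not a different one.
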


\begin{proof}
We have
\begin{align*}
\bar F_{c-1}(q) 
&= 
\frac{f_4 ^{c-2}}{f_1 ^2 f_2^{2c-5}} = \varphi(q)\varphi(q^2)^{c-2} \bar F_{c-1}(q^2)^2 \\
&= 
\varphi(q)\varphi(q^2)^{c-2}\left( \varphi(q^2)\varphi(q^4)^{c-2} \bar F_{c-1}(q^4)^2 \right)^2\\
&= 
\varphi(q)\varphi(q^2)^{c} \varphi(q^4)^{2(c-2)} \bar F_{c-1}(q^4)^4 \\
&= 
\varphi(q)\varphi(q^2)^{c} \varphi(q^4)^{2(c-2)}\left(  \varphi(q^4)\varphi(q^8)^{c-2} \bar F_{c-1}(q^8)^2 \right)^4 \\
&= 
\varphi(q)\varphi(q^2)^{c} \varphi(q^4)^{2c} \varphi(q^8)^{4(c-2)} \bar F_{c-1}(q^8)^8  \\
&\,~ \vdots
\end{align*} 
The result follows by continuing to iterate (\ref{functional equation F_c (q)}) indefinitely. 
\end{proof}

Using Lemma \ref{theorem for overline F_(c-1)(q)}, we now prove Theorem \ref{bar a_c (n) equiv 4}.

\begin{proof}[Proof of Theorem \ref{bar a_c (n) equiv 4}]
From the Binomial theorem, it is clear that $\varphi(q^{2^i})^{t\cdot2^{i-1}} \equiv 1 \pmod 4$ for each $i \geq 2$. Thus, 
\begin{align*}
\sum_{n\geq 0} \bar a_c(n)q^n &= \varphi(q) \prod_{i\geq 1} \varphi(q^{2^i})^{{(c+1)} 2^{i-1}}\\
&\equiv \varphi(q) \varphi(q^2)^{c+1} \pmod 4\\
&= \left(1 + 2 \sum_{n\geq1} q^{n^2} \right) \left(1 + 2 \sum_{n\geq1} q^{2n^2} \right)^{c+1}\\
&= \left(1 + 2 \sum_{n\geq1} q^{n^2} \right) \left(\sum_{k=0}^{c+1} \binom{c+1}{k} 2^k \left(\sum_{n\geq1} q^{2n^2}\right)^k \right)\\
&\equiv \left(1 + 2 \sum_{n\geq1} q^{n^2} \right) \left(1 + 2 (c+1) \sum_{n\geq1} q^{2n^2} \right) \pmod 4\\
&\equiv 1 + 2 \sum_{n\geq1} q^{n^2} + 2 (c+1) \sum_{n\geq1} q^{2n^2} \pmod 4.
\end{align*}
Hence, the result follows from the above identity.    
\end{proof}

\section{Proofs of Theorems \ref{Thm a_c 8n+5/7}, \ref{thm:nn}, and \ref{thm:3cong}}\label{sec:infinite}

We need the following $2$- and $3$-dissection formulas.
\begin{lemma}
\label{lemma_1overf1squared}
We have 
\begin{align}
\label{2d phi}\varphi(q)&=\varphi\left(q^4\right)+2q \psi\left(q^8\right),\\
   \label{2d f1^2}f_1^2&= \frac{f_2 f_8^5}{f_4^2 f_{16}^2}-2q \frac{f_2 f_{16}^2}{f_8},\\
   \label{2d f1^22}\frac{1}{f_1^2} &= \frac{f_8^5}{f_2^5f_{16}^2} + 2q\frac{f_4^2f_{16}^2}{f_2^5f_8},\\
   \label{3d phi}\varphi(q)&=\varphi\left(q^9\right)+2q \Omega\left(q^3\right),\\
    \label{3d f2^2/f1}\dfrac{f_2^2}{f_1}&=\frac{f_6 f_9^2}{f_3 f_{18}}+q\frac{f_{18}^2}{f_9},\\
   \label{3d f1/f2}\dfrac{f_1}{f_4} &=\frac{f_6 f_9 f_{18}}{f_{12}^3}-q \frac{f_3 f_{18}^4}{f_9^2 f_{12}^3}-q^2 \frac{f_6^2 f_9 f_{36}^3}{f_{12}^4 f_{18}^2},
\end{align}
where $\psi(q):=\dfrac{f_2^2}{f_1}$ and $\Omega(q):=\dfrac{f_2^2 f_3 f_{12}}{f_1 f_{4} f_{6}}$.
\end{lemma}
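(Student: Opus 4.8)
The plan is to treat all six identities as dissections of theta functions and eta-quotients, splitting the work into an easy ``theta-series'' part and the product (eta-quotient) bookkeeping that follows. The two genuinely independent inputs are the $2$-dissection \eqref{2d phi} and the $3$-dissection \eqref{3d phi} of $\varphi$; every other formula either follows from these by elementary algebra or is an analogous theta dissection. Throughout I would use the classical product evaluations $\varphi(q) = f_2^5/(f_1^2 f_4^2)$, $\varphi(-q) = f_1^2/f_2$, and $\psi(q) = f_2^2/f_1$, each of which is itself a one-line consequence of the Jacobi triple product.

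First I would prove \eqref{2d phi} directly from $\varphi(q) = \sum_{k\in\mathbb{Z}} q^{k^2}$ by splitting the sum according to the parity of $k$: the even part ($k=2m$) gives $\varphi(q^4)$, and the odd part ($k=2m+1$) gives $q\sum_{m\in\mathbb{Z}} q^{4m(m+1)} = 2q\psi(q^8)$, once one recognizes via the Jacobi triple product that $\sum_{m\in\mathbb{Z}} q^{m(m+1)/2} = 2\psi(q)$. With \eqref{2d phi} in hand, \eqref{2d f1^2} and \eqref{2d f1^22} are immediate and require no new idea. For \eqref{2d f1^2} I would write $f_1^2 = f_2\,\varphi(-q)$ and substitute the $q\mapsto -q$ version of \eqref{2d phi}, namely $\varphi(-q) = \varphi(q^4) - 2q\psi(q^8)$, then replace $\varphi(q^4) = f_8^5/(f_4^2 f_{16}^2)$ and $\psi(q^8) = f_{16}^2/f_8$. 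For \eqref{2d f1^22} I would write $1/f_1^2 = (f_4^2/f_2^5)\varphi(q)$ and substitute \eqref{2d phi} in product form. Each reduces to a short simplification of infinite products.

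The $3$-dissections are where the real work lies. For \eqref{3d phi} I would again split $\sum_{k\in\mathbb{Z}} q^{k^2}$ by the residue of $k$ modulo $3$: the class $k\equiv 0$ yields $\varphi(q^9)$, while the classes $k\equiv\pm1$ combine to $2q\sum_{m\in\mathbb{Z}} q^{3m(3m+2)} = 2q\,\Omega(q^3)$, so the task reduces to showing that the theta series $\Omega(q) = \sum_{m\in\mathbb{Z}} q^{3m^2+2m}$ equals the eta-quotient $f_2^2 f_3 f_{12}/(f_1 f_4 f_6)$; this is a direct but careful application of the Jacobi triple product followed by a regrouping of the resulting products. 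The identity \eqref{3d f2^2/f1} is the $3$-dissection of $\psi(q) = \sum_{n\geq0} q^{n(n+1)/2}$, obtained by separating the triangular numbers according to $n \bmod 3$ (the exponent $n(n+1)/2$ is $\equiv 1 \pmod 3$ exactly when $n\equiv 1$, and $\equiv 0$ otherwise) and identifying each sub-series with its product form. Finally \eqref{3d f1/f2}, the three-term $3$-dissection of $f_1/f_4$, I would obtain the same way, starting from a theta representation and splitting its exponents into the three residue classes modulo $3$.

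The main obstacle is not the splitting but the bookkeeping that converts each dissected theta series into the precise eta-quotient on the right-hand side, especially for the three-term identity \eqref{3d f1/f2}, where three separate products must each be matched and the cross terms tracked carefully. If a direct Jacobi-triple-product match becomes unwieldy, a clean fallback is to clear all denominators so that both sides become \emph{holomorphic} eta-quotients, recognize the resulting equation as an identity of modular forms of a common weight and level via Theorem \ref{thm_ono1}, and then verify it by comparing Fourier coefficients up to the Sturm bound of Theorem \ref{Sturm}. I expect all six identities to already appear in standard references in the $q$-series literature (e.g.\ the work of Hirschhorn), so citation is also a legitimate route; the sketch above records how each would be established from first principles.
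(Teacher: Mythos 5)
Your treatment of the first five identities is sound and is actually \emph{more} self-contained than the paper's own proof: for \eqref{2d phi}, \eqref{2d f1^2}, \eqref{2d f1^22}, \eqref{3d phi}, and \eqref{3d f2^2/f1} the paper simply cites Hirschhorn's book (and gets \eqref{2d f1^2} by $q\mapsto-q$, exactly as you do), whereas your residue-class splittings of $\sum_{k\in\mathbb{Z}}q^{k^2}$ and $\sum_{n\geq0}q^{n(n+1)/2}$ combined with the Jacobi triple product genuinely prove them from scratch. I checked the key step of your mod-$3$ splitting: the classes $k\equiv\pm1\pmod 3$ give $2q\sum_{m\in\mathbb{Z}}(q^3)^{3m^2+2m}$, and the triple product with base $q^3$ and $z=q^2$ converts $\sum_m q^{3m^2+2m}$ into $f_6\prod_{n\geq1}(1+q^{6n-1})(1+q^{6n-5})=\frac{f_2^2f_3f_{12}}{f_1f_4f_6}=\Omega(q)$; the analogous computation for $\psi$ yields $\frac{f_6f_9^2}{f_3f_{18}}+q\frac{f_{18}^2}{f_9}$ as claimed.

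The gap is in \eqref{3d f1/f2} --- which is the only identity of the six that the paper actually has to prove rather than quote. Your primary plan, ``starting from a theta representation of $f_1/f_4$ and splitting its exponents into the three residue classes modulo $3$,'' cannot be carried out, because $f_1/f_4$ has no theta-series representation: it is a weight-zero eta-quotient, equal to the ratio $\varphi(-q)/\psi(-q)$ of two weight-$\frac12$ theta functions, so there is no single series $\sum_m q^{Q(m)}$ to dissect (contrast $f_2^2/f_1=\psi(q)$ and $f_1^2/f_2=\varphi(-q)$, which are honest theta functions). This is exactly why the paper takes a different route: it factors $f_1/f_4=\frac{f_2}{f_4^2}\cdot\frac{f_1f_4}{f_2}$, inserts the known $3$-dissections \eqref{3d f1/f2^2} (with $q\to q^2$) and \eqref{3d f1f4/f2} (i.e.\ of $\psi(-q)$, obtained from \eqref{3d f2^2/f1} by $q\to-q$), and then collapses the resulting cross terms using the auxiliary identities \eqref{2d f1/f3^3} and \eqref{2d f3^3/f1}. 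Your fallback (clear denominators and verify an eta-quotient identity up to a Sturm bound) is a legitimate strategy in principle, but not as stated: clearing denominators in \eqref{3d f1/f2} produces four eta-quotients each of weight $\frac92$ (the exponent sums are odd), so Theorem \ref{thm_ono1}, which requires integral weight, does not apply directly, and the mod-$24$ conditions there are not met either; you would need to multiply through by a further odd eta-power and rescale $q\mapsto q^{24}$ before the cited machinery, together with Theorem \ref{Sturm}, can be invoked. Also, the citation escape hatch is not available for this particular identity --- it is precisely the one the authors could not find in the literature. So as written, your argument establishes five of the six identities but not \eqref{3d f1/f2}.
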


\begin{proof}
     \eqref{2d phi} and \eqref{2d f1^22} are the same identity in \cite[Eq. (1.9.4)]{Power}, \eqref{2d f1^2} can be obtained by replacing $q$ by $-q$ in \eqref{2d phi}, while \eqref{3d phi} and \eqref{3d f2^2/f1} are \cite[Eq. (26.1.1), (26.1.2)]{Power}. We prove \eqref{3d f1/f2} below. We need the following identities
     \begin{align}
        \label{2d f1/f3^3} \frac{f_6 f_{12}^2 f_{18}^2 f_{36}^2}{f_3} &= \frac{f_{18}^9}{f_9^3}+q^3 \frac{f_6^3 f_{36}^6}{f_3 f_{12}^2},\\
        \label{2d f3^3/f1} f_9^3 f_{12}&= \frac{f_3 f_{12}^4 f_{18}^2}{f_6^2 f_{36}}+q^3 f_3 f_{36}^3,\\
        \label{3d f1/f2^2} \frac{f_1}{f_2^2} &= \frac{f_3^2 f_9^3}{f_6^6}-q \frac{f_3^3 f_{18}^3}{f_6^7}+q^2 \frac{f_3^4 f_{18}^6}{f_6^8 f_9^3},\\
        \label{3d f1f4/f2} \frac{f_1f_4}{f_2} &= \frac{f_3 f_{12} f_{18}^5}{f_6^2 f_9^2 f_{36}^2} - q \frac{f_9 f_{36}}{f_{18}}.
     \end{align}
     \eqref{2d f3^3/f1}  and \eqref{3d f1/f2^2} are  \cite[Eqs. (22.7.5) and (37.2.3)]{Power} in disguise, respectively.  \eqref{2d f1/f3^3} and \eqref{3d f1f4/f2} can be obtained by replacing $q$ by $-q$ in \eqref{2d f3^3/f1} and \eqref{3d f2^2/f1}, respectively. With the help of \eqref{3d f1/f2^2} and \eqref{3d f1f4/f2}, we have
     \begin{align*}
         \dfrac{f_1}{f_4}&=\dfrac{f_2}{f_4^2}\cdot \dfrac{f_1f_4}{f_2}\\
         &= \frac{f_3 f_9}{ f_{12}^5 f_{18} f_{36}^2}\left(\frac{f_{18}^9}{f_9^3}+q^3 \frac{f_6^3 f_{36}^6}{f_3 f_{12}^2}\right)-q \frac{f_6^2 f_{18}^2 f_{36}}{f_9^2 f_{12}^7}\left(f_9^3 f_{12}-f_3 f_{36}^3 q^3\right)\\
         &\quad - q^2 \frac{f_3 f_6 f_9 f_{36}}{f_{12}^6 f_{18}^4}\left(\frac{f_{18}^9}{f_9^3}+q^3 \frac{f_6^3 f_{36}^6}{f_3 f_{12}^2}\right).
     \end{align*}
     Hence, \eqref{3d f1/f2} is evident after employing \eqref{2d f1/f3^3} and \eqref{2d f3^3/f1} in the above identity.
\end{proof}

We also need the following easily proved generating functions.
\begin{lemma}\label{bar a_c (2n) and bar a_c (2n+1)}
For all $c\geq 1$, we have 
\begin{align*}
    \sum_{n\geq 0} \bar a_c(2n)q^n &= \frac{f_2^{c-1}f_4^5}{f_1^{2c+2}f_8^2},\\
\sum_{n\geq 0} \bar a_c(2n+1)q^n &= 2 \frac{f_8^2}{f_4} \left(\frac{f_2}{f_1^2}\right)^{c+1}.
\end{align*}
\end{lemma}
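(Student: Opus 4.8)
The goal is to establish the two generating functions
\[
\sum_{n\geq 0} \bar a_c(2n)q^n = \frac{f_2^{c-1}f_4^5}{f_1^{2c+2}f_8^2}, \qquad
\sum_{n\geq 0} \bar a_c(2n+1)q^n = 2\,\frac{f_8^2}{f_4}\left(\frac{f_2}{f_1^2}\right)^{c+1}.
\]
The plan is to start from the functional equation \eqref{functional equation F_c (q)}, namely $\bar F_c(q) = \varphi(q)\varphi(q^2)^{c-1}\bar F_c(q^2)^2$, and perform a $2$-dissection. The only factor carrying odd powers of $q$ is $\varphi(q)$, since every other factor on the right is already a series in $q^2$. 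Thus I would substitute the $2$-dissection \eqref{2d phi}, $\varphi(q)=\varphi(q^4)+2q\,\psi(q^8)$, into the functional equation. This immediately separates $\bar F_c(q)$ into its even and odd parts: the term coming from $\varphi(q^4)$ contributes the even-indexed coefficients, and the term coming from $2q\,\psi(q^8)$ contributes the odd-indexed coefficients.

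Concretely, writing $G(q):=\varphi(q^2)^{c-1}\bar F_c(q^2)^2$ (a series in $q^2$), we have $\bar F_c(q) = \bigl(\varphi(q^4)+2q\,\psi(q^8)\bigr)G(q)$. Extracting even parts gives $\sum_{n\geq 0}\bar a_c(2n)q^{2n} = \varphi(q^4)G(q)$, and extracting odd parts gives $\sum_{n\geq 0}\bar a_c(2n+1)q^{2n+1} = 2q\,\psi(q^8)G(q)$. Replacing $q^2$ by $q$ (equivalently $q\mapsto q^{1/2}$ followed by relabeling) then yields
\[
\sum_{n\geq 0}\bar a_c(2n)q^n = \varphi(q^2)\,\varphi(q)^{c-1}\,\bar F_c(q)^2, \qquad
\sum_{n\geq 0}\bar a_c(2n+1)q^n = 2\,\psi(q^4)\,\varphi(q)^{c-1}\,\bar F_c(q)^2.
\]
At this point it remains purely to substitute the eta-quotient expressions $\varphi(q)=f_2^5/(f_1^2f_4^2)$, $\psi(q)=f_2^2/f_1$, and $\bar F_c(q)=f_4^{c-1}/(f_1^2 f_2^{2c-3})$ from \eqref{genfn_bar ac}, and simplify the resulting product of infinite products by collecting exponents of each $f_k$.

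The verification is entirely a bookkeeping exercise in tracking exponents. For the even case, I expect $\varphi(q^2)=f_4^5/(f_2^2f_8^2)$ and $\varphi(q)^{c-1}=f_2^{5(c-1)}/(f_1^{2(c-1)}f_4^{2(c-1)})$ together with $\bar F_c(q)^2 = f_4^{2(c-1)}/(f_1^4 f_2^{2(2c-3)})$ to collapse, after cancellation of the $f_4$ exponents and consolidation of $f_1,f_2$ exponents, to exactly $f_2^{c-1}f_4^5/(f_1^{2c+2}f_8^2)$. The odd case is analogous, with $\psi(q^4)=f_8^2/f_4$ supplying the $f_8^2/f_4$ prefactor and the remaining factors combining into $(f_2/f_1^2)^{c+1}$. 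Since there is no genuine obstacle here, the only thing to watch is the arithmetic of the exponents; I would double-check each $f_k$-exponent sum independently (for $k=1,2,4,8$) to make sure nothing is dropped, and confirm that the $f_8$ exponent is $-2$ in the even case and $+2$ in the odd case, matching the stated formulas.
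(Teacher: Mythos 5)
Your proof is correct: the functional equation \eqref{functional equation F_c (q)} does isolate all odd powers of $q$ in the single factor $\varphi(q)$, the dissection \eqref{2d phi} then splits the series cleanly, and the exponent bookkeeping you outline checks out (for the even part the $f_4$-exponents cancel to $5$, the $f_2$-exponents to $c-1$, the $f_1$-exponents to $-(2c+2)$, with $f_8^{-2}$ from $\varphi(q^2)$; for the odd part one gets $2f_8^2f_2^{c+1}/(f_4 f_1^{2c+2})$ as claimed). Your route differs mildly from the paper's: the paper works directly with the product form \eqref{genfn_bar ac}, writing $\sum_{n\geq 0}\bar a_c(n)q^n = \frac{f_4^{c-1}}{f_2^{2c-3}}\cdot\frac{1}{f_1^2}$ and substituting the $2$-dissection \eqref{2d f1^22} of $1/f_1^2$, so the even/odd split and the final eta-quotients appear in one step with no conversion back from $\varphi$ and $\psi$ to the $f_k$. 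Your detour through the functional equation and \eqref{2d phi} is equivalent in substance --- indeed $1/f_1^2 = \varphi(q)\,f_4^2/f_2^5$, so the two dissections carry the same information --- but it costs an extra layer of translation between theta functions and eta-quotients. What it buys is the structural observation that the $2$-dissection of $\bar F_c$ is governed entirely by the $2$-dissection of $\varphi$, which is precisely the mechanism the paper exploits elsewhere, in Lemma \ref{theorem for overline F_(c-1)(q)} and the proofs of Theorems \ref{bar a_c (n) equiv 4} and \ref{bar a_c (n) equiv 8}.
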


\begin{proof}
We have  
\begin{align*}
\sum_{n\geq 0} \bar a_c(n)q^n 
&= \frac{f_4 ^{c-1}}{f_1^2 f_2 ^{2c-3}}= \frac{f_4 ^{c-1}}{f_2 ^{2c-3}} \left(\frac{f_8^5}{f_2^5 f_{16}^2} + 2q \frac{f_4^2 f_{16}^2}{f_2^5 f_8} \right) \textrm{\ \ \ thanks to Lemma \ref{lemma_1overf1squared} }.
\end{align*}
Now, extracting the terms involving even and odd powers of $q$ respectively from the above and then setting $q^2\rightarrow q$, we obtain the given generating functions in the lemma.
\end{proof}

\begin{proof}[Proof of Theorem \ref{Thm a_c 8n+5/7}]
    Using Lemma \ref{bar a_c (2n) and bar a_c (2n+1)}, we have
    \begin{align}
        \sum_{n\geq 0} \bar a_{2^ki-2^{k-1}-2}(2n+1)q^n&\ =2\dfrac{f_2^{2^ki-2^{k-1}-1}f_8^2}{f_1^{2^{k+1}i-2^{k}-2}f_4}\equiv 2\dfrac{f_1^{2^k+2}f_8^2}{f_2^{2^{k-1}+1}f_4}\pmod{2^{k+2}}\notag,
    \end{align}
       which, with the help of the 2-dissection \eqref{2d f1^2} of $f_1^2$, gives the following dissections
    \begin{align}
     \label{4n+1}\sum_{n\geq 0} \bar a_{2^ki-2^{k-1}-2}(4n+1)q^n&\equiv 2\sum_{r= 0}^{2^{k-2}}2^{2r} \binom{2^{k-1}+1}{2r}q^r \dfrac{f_4^{5\times 2^{k-1}-12 r+7}}{f_2^{2^k-4 r+3} f_8^{2^k-8 r+2}} \pmod{2^{k+2}},\\
       \label{4n+3} \sum_{n\geq 0} \bar a_{2^ki-2^{k-1}-2}(4n+3)q^n&\equiv -4\sum_{r= 0}^{2^{k-2}}2^{2r} \binom{2^{k-1}+1}{2r+1}q^r \dfrac{f_4^{5\times 2^{k-1}-12 r+1}}{f_2^{2^k-4 r+1}f_{8}^{2^k-8 r-2}} \pmod{2^{k+2}}.
    \end{align}
    By the properties of binomial coefficients, for $r\ge 1$, we have
    \begin{align*}
    2^{2r} \binom{2^{k-1}+1}{2r}&\equiv 0 \pmod{2^{k}}, &
     2^{2r} \binom{2^{k-1}+1}{2r+1}&\equiv 0 \pmod{2^k}.
    \end{align*}
    Therefore, \eqref{4n+1} and \eqref{4n+3} reduce to
    \begin{align*}
        \sum_{n\geq 0} \bar a_{2^ki-2^{k-1}-2}(4n+1)q^n&\equiv 2 \dfrac{f_4^{5\times 2^{k-1}+7}}{f_2^{2^k+3} f_8^{2^k+2}} \pmod{2^{k+1}},\\
       \sum_{n\geq 0} \bar a_{2^ki-2^{k-1}-2}(4n+3)q^n&\equiv -4 (2^{k-1}+1) \dfrac{f_4^{5\times 2^{k-1}+1}}{f_2^{2^k+1}f_{8}^{2^k-2}} \pmod{2^{k+2}}.
    \end{align*}
    Extracting the coefficients of the odd powers of $q$ from the above clearly proves Theorem \ref{Thm a_c 8n+5/7}.
\end{proof}

\begin{proof}[Proof of Theorem \ref{thm:nn}]
    Using Lemma \ref{bar a_c (2n) and bar a_c (2n+1)}, we have
    \begin{align}
        \label{a_c 2n+1 mod 2^{k+3}}\sum_{n\geq 0} \bar a_{2^ki+2^{k-1}-3}(2n+1)q^n&\equiv 2\dfrac{f_2^{2^ki+2^{k-1}-2}f_8^2}{f_1^{2^{k+1}i+2^{k}-4}f_4}=2\dfrac{\psi\left(q^4\right)}{\varphi\left(-q\right)^{2^k  i+2^{k-1}-2}}\pmod{2^{k+3}}\\
        &\equiv 2\dfrac{\varphi\left(-q\right)^{2^{k-1}(6i+7)+2}\psi\left(q^4\right)}{\varphi\left(-q\right)^{2^{k+2}  (i+1)}}\nonumber \\&\equiv 2\dfrac{\varphi\left(-q\right)^{2^{k-1}(6i+7)+2}\psi\left(q^4\right)}{\varphi\left(-q^2\right)^{2^{k+1}  (i+1)}}\pmod{2^{k+3}}.
    \end{align}
    Employing the 2-dissection of $\varphi(q)$ \eqref{2d phi} and then extracting the terms with odd powers of $q$, we obtain
    \begin{align}
        \label{Thm 4n+3}\sum_{n\geq 0} \bar a_{2^ki+2^{k-1}-3}(4n+3)q^n&\equiv -2^2\dfrac{\psi\left(q^2\right)}{\varphi\left(-q\right)^{2^{k+1}  (i+1)}} \sum_{r=0}^{2^{k-2}(6i+7)}2^{2r}\binom{2^{k-1}(6i+7)+2}{2r+1}q^r\notag\\
        &\quad \times \varphi\left(q^2\right)^{2^{k-1}(6i+7)-2r+1}\psi\left(q^4\right)^{2r+1}\pmod{2^{k+3}}.
    \end{align}
    By the properties of binomial coefficients, for $r\ge 1, i\ge 0$, we have
    \begin{align*}
        2^{2r}\binom{2^{k-1}(6i+7)+2}{2r+1}&\equiv 0 \pmod{2^{k+1}}.
    \end{align*}
    Therefore, \eqref{Thm 4n+3} reduces to
    \begin{align*}
        \sum_{n\geq 0} \bar a_{2^ki+2^{k-1}-3}(4n+3)q^n&\equiv -2^2(2^{k-1}(6i+7)+2)\\& \quad \times \dfrac{\varphi\left(q^2\right)^{2^{k-1}(6i+7)+1}\psi\left(q^2\right)\psi\left(q^4\right)}{\varphi\left(-q^2\right)^{2^{k}  (i+1)}} \pmod{2^{k+3}}.
    \end{align*}
    From the above identity, we evidently arrive at \eqref{thm -3 8n+7} by extracting out the odd powers of $q$. The proof of \eqref{thm -2 8n+7} is similar to that of \eqref{thm -3 8n+7}, so we skip it.
\end{proof}

\begin{proof}[Proof of Theorem \ref{thm:3cong}]
The proofs of the congruences in the theorem are similar. So, we prove only the first one.    Under modulo 3, we have
    \begin{align*}
        \sum_{n=0}^\infty \bar a_{9i}(n)q^n= \dfrac{f_4^{9i-1}}{f_1^2f_2^{18i-3}}\equiv \dfrac{f_{12}^{3i}}{f_3f_6^{6i-1}}\cdot \dfrac{f_1}{f_4} \pmod{3}
    \end{align*}
    Invoking \eqref{3d f1/f2} and then extracting the terms involving $q^{3n}$, we obtain
    \begin{align*}
        \sum_{n=0}^\infty \bar a_{9i}(3n)q^n&\equiv \frac{f_3 f_{6} f_2^{2-6i} f_{4}^{3i-3}}{f_1} \equiv \dfrac{f_3f_6f_{12}^{i-1}}{f_6^{2i}}\cdot \dfrac{f_2^2}{f_1} \pmod{3}.
    \end{align*}
    Since there is no term involving $q^{3n+2}$ in the 3-dissection of $f_2^2/f_1$, for all $n\ge0$, the above identity gives
    \begin{align}
        \label{9i mod 3} \bar a_{9i}(9n+6)\equiv 0 \pmod{3}.
    \end{align}
    Again, we have
    \begin{align*}
        \sum_{n=0}^\infty \bar a_{9i}(n)q^n&= \dfrac{f_4^{9i-1}}{f_1^2f_2^{18i-3}}= \dfrac{1}{\varphi\left(-q\right)\varphi\left(-q^2\right)^{9i-1}}\\
        &=\dfrac{\varphi\left(-\omega q\right)\varphi\left(-\omega^2 q\right)}{\varphi\left(-q\right)\varphi\left(-\omega q\right)\varphi\left(-\omega^2 q\right)}\left(\dfrac{\varphi\left(-\omega q^2\right)\varphi\left(-\omega^2 q^2\right)}{\varphi\left(-q^2\right)\varphi\left(-\omega q^2\right)\varphi\left(-\omega^2 q^2\right)}\right)^{9i-1},
    \end{align*}
    where $\omega\neq 1$ is a cube root of unity.
    Due to \eqref{3d phi}, the above identity is equivalent to
    \begin{align*}
        \sum_{n=0}^\infty \bar a_{9i}(n)q^n&=\dfrac{\varphi\left(-q^9\right)\varphi\left(-q^{18}\right)^{9i-1}}{\varphi\left(-q^3\right)^4\varphi\left(-q^{6}\right)^{36i-4}}\Bigg(\left(\varphi\left(-q^9\right)-2\omega q \Omega\left(-q^3\right)\right)\\ &\quad \times \left(\varphi\left(-q^9\right)-2\omega^2 q \Omega\left(-q^3\right)\right)
       \left(\varphi\left(-q^{18}\right)-2\omega q^2 \Omega\left(-q^6\right)\right)^{9i-1}\\ &\quad \times \left(\varphi\left(-q^{18}\right)-2\omega^2 q^2 \Omega\left(-q^6\right)\right)^{9i-1}\Bigg).
    \end{align*}
    Under modulo 4, we have
    \begin{align*}
        \sum_{n=0}^\infty \bar a_{9i}(n)q^n&\equiv \dfrac{\varphi\left(-q^9\right)\varphi\left(-q^{18}\right)^{9i-1}}{\varphi\left(-q^3\right)^4\varphi\left(-q^{6}\right)^{36i-4}}\Bigg(\left(\varphi\left(-q^9\right)^2+2 q \varphi\left(-q^9\right)\Omega\left(-q^3\right)\right)\\
        &\quad \times \left(\varphi\left(-q^{18}\right)^2+2q^2 \varphi\left(-q^{18}\right)\Omega\left(-q^6\right)\right)^{9i-1}\Bigg) \pmod{4}\\
        &\equiv \dfrac{\varphi\left(-q^9\right)\varphi\left(-q^{18}\right)^{9i-1}}{\varphi\left(-q^3\right)^4\varphi\left(-q^{6}\right)^{36i-4}}\Bigg(\left(\varphi\left(-q^9\right)^2+2 q \varphi\left(-q^9\right)\Omega\left(-q^3\right)\right)\\
        &\quad \times \left(\varphi\left(-q^{18}\right)^{18i-2}+2(9i-1)q^2 \varphi\left(-q^{18}\right)^{18i-3}\Omega\left(-q^6\right)\right)\Bigg) \pmod{4}.
    \end{align*}
    Extracting the terms involving $q^{3n}$ from the above identity, we find that
    \begin{align*}
         \sum_{n=0}^\infty \bar a_{9i}(3n)q^n&\equiv \dfrac{\varphi\left(-q^3\right)^3\varphi\left(-q^{6}\right)^{27i-3}}{\varphi\left(-q\right)^4\varphi\left(-q^{2}\right)^{36i-4}} \pmod{4}.
    \end{align*}
    Repeating the above process, we obtain
    \begin{align*}
         \sum_{n=0}^\infty \bar a_{9i}(3n)q^n&\equiv \dfrac{\varphi\left(-q^9\right)^{12}\varphi\left(-q^{18}\right)^{108i-12}}{\varphi\left(-q^3\right)^{16}\varphi\left(-q^{6}\right)^{144i-16}} \pmod{4},
    \end{align*}
    from which, it is clear that for all $n\ge0$,
    \begin{align}
        \label{9i mod 4} \bar a_{9i}(9n+6)\equiv 0 \pmod{4}.
    \end{align}
    The congruences \eqref{9i mod 3} and \eqref{9i mod 4} together imply  the first congruence of the theorem.
\end{proof}

\section{Proofs of Theorem \ref{Lacunary modulo 2^k}, \ref{Lacunary modulo 3^k}, and \ref{last thm}}\label{sec:density}

The proof of Theorem \ref{Lacunary modulo 2^k} is immediate from a result of  Cotron \textit{et. al} \cite{cotronetal}. We define
\begin{align}
    G(\tau) :=\frac{\eta(\delta_1\tau)^{r_1}\eta(\delta_2\tau)^{r_2}\cdots\eta(\delta_u\tau)^{r_u}}{\eta(\gamma_1\tau)^{s_1}\eta(\gamma_2\tau)^{s_2}\cdots\eta(\gamma_t\tau)^{s_t}}=q^{\tfrac{E_G}{24}}\sum\limits_{n=0}^{\infty}b(n)q^{n},\label{G(tau)}
\end{align}
where $r_i , s_i, \delta_i,$ and $\gamma_i$ are positive integers with $\delta_1,...,\delta_u, \gamma_1,...,\gamma_t$ distinct, $u, t \geq 0$, and 
$$E_G: = \sum \limits_{i=1}^u\delta_ir_i-\sum \limits_{i=1}^t\gamma_is_i.$$
The weight of $G(\tau)$ is given by
\begin{align*}
    \frac{1}{2}\left(\sum_{i=1}^{u}r_i-\sum_{i=1}^{t}s_i\right).
\end{align*}
Also define $\mathcal{D_G}:=\gcd(\delta_1, \delta_2,\ldots,\delta_u)$.
Then, we have the following result.
\begin{theorem}\cite[Theorem 1.1]{cotronetal}\label{thm:cotron}
    Suppose $G(\tau)$ is an eta-quotient of the form \eqref{G(tau)} with integer weight. If $p$ is a prime such that $p^a$ divides $\mathcal{D_G}$ and
    \begin{align}  p^a\geq\sqrt{\frac{\sum_{i=1}^{t}\gamma_is_i}{\sum_{i=1}^{u}\dfrac{r_i}{\delta_i}}},
    \end{align}
then $G(\tau)$ is lacunary modulo $p^j$ for any positive integer $j$. Moreover, there exists a positive constant $\alpha$, depending on $p$ and $j$, such that the number of integers $n \leq X$ with $p^j$ not dividing $b(n)$ is $O\left(\dfrac{X}{\log^{\alpha}X}\right)$.
\end{theorem}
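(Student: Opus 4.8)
The plan is to deduce the statement from the Serre-type lacunarity result for \emph{holomorphic} modular forms recorded in Theorem~\ref{thm_ono2}, by showing that under the stated hypotheses the $q$-expansion of $G(\tau)$ agrees, modulo any prescribed power $p^j$, with that of a holomorphic modular form with integer coefficients and integer weight on some $\Gamma_0(M)$ with a quadratic Nebentypus. The subtlety is that $G(\tau)$ is only a \emph{weakly} holomorphic modular form: by Theorem~\ref{thm_ono1} it has the right transformation law, but the denominator factors $\eta(\gamma_i\tau)^{s_i}$ may create poles at certain cusps, and a weakly holomorphic form need not be lacunary. So the entire content is to trade these poles for genuine holomorphy at the cost of working modulo $p^j$ (adjusting by an Eisenstein factor $E_{p-1}(\tau)^{p^{j-1}}\equiv 1\pmod{p^j}$ if needed to land in an integer weight $\ge 1$ holomorphic space), after which Theorem~\ref{thm_ono2} applies verbatim and yields the bound $O(X/\log^\alpha X)$.

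The engine is the elementary congruence
\begin{equation*}
\eta(\delta\tau)^{p}\equiv\eta(p\delta\tau)\pmod p,
\end{equation*}
which follows from $(1-x)^p\equiv 1-x^p\pmod p$ applied to the product expansion of $\eta$, together with its refinement $\eta(\delta\tau)^{p^k}\equiv\eta(p\delta\tau)^{p^{k-1}}\pmod{p^k}$, obtained by iterating and tracking the $p$-adic valuation $v_p\!\left(\binom{p^{k-1}}{\ell}\right)=(k-1)-v_p(\ell)$ of the binomial coefficients. Because $p^a\mid\mathcal{D_G}=\gcd(\delta_1,\dots,\delta_u)$, every numerator argument is divisible by $p^a$; writing $\delta_i=p^a\delta_i'$ and iterating the basic congruence $a$ times gives $\eta(\delta_i\tau)\equiv\eta(\delta_i'\tau)^{p^a}\pmod p$, hence
\begin{equation*}
\prod_{i=1}^{u}\eta(\delta_i\tau)^{r_i}\equiv\prod_{i=1}^{u}\eta(\delta_i'\tau)^{p^a r_i}\pmod p.
\end{equation*}
I would first carry out the argument at $j=1$, replacing the numerator of $G$ by this congruent product to obtain a holomorphic $\widetilde G\equiv G\pmod p$, and only then bootstrap to general $j$ using the refined power congruence above (applied to suitable powers, with exponent and level allowed to grow in $j$).

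The reason the substitution helps is that it \emph{raises} the order of vanishing at every cusp. By the cusp-order formula of Theorem~\ref{thm_ono1}, a factor $\eta(\delta\tau)^{r}$ contributes to the order at $c/d$ a quantity proportional to $r\,\gcd(d,\delta)^2/\delta$; replacing $(\delta,r)$ by $(\delta/p^a,\,p^a r)$ turns this into $p^{2a}\,r\,\gcd(d,\delta/p^a)^2/\delta$, a net gain of a factor $p^{2a}\gcd(d,\delta/p^a)^2/\gcd(d,\delta)^2\ge 1$ (an elementary $p$-part computation). Summing over the numerator and using the crude bounds $\gcd(d,\delta_i/p^a)\ge 1$ and $\gcd(d,\gamma_i)^2/\gamma_i\le\gamma_i$, the order of the modified eta-quotient at every cusp $c/d$ is bounded below by a positive multiple of
\begin{equation*}
p^{2a}\sum_{i=1}^{u}\frac{r_i}{\delta_i}-\sum_{i=1}^{t}\gamma_i s_i,
\end{equation*}
which is nonnegative precisely when $p^{2a}\ge\sum_i\gamma_i s_i\big/\sum_i r_i/\delta_i$, i.e. the stated hypothesis after taking square roots. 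It is exactly the factor $p^{2a}$, rather than $p^a$, that produces the square root in the bound.

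\textbf{Main obstacle.} The congruences and the final appeal to Theorem~\ref{thm_ono2} are routine. The crux is the uniform cusp-order bookkeeping: one must verify that the modified object is a genuine holomorphic modular form on a common $\Gamma_0(M)$ with character (re-checking the integrality and the two divisibility-by-$24$ conditions of Theorem~\ref{thm_ono1} after the substitution, and fixing weight parity by the Eisenstein factor), and that the displayed lower bound holds \emph{simultaneously at every cusp} $c/d\mid M$. The secondary difficulty is extending the construction from $j=1$ to arbitrary $j$ without eroding this estimate: here one must deploy $\eta(\delta\tau)^{p^k}\equiv\eta(p\delta\tau)^{p^{k-1}}\pmod{p^k}$ with $k$ growing in $j$, keep the resulting congruent expression inside the ring of $q$-expansions of holomorphic modular forms, and confirm that all error terms land in $p^j\mathbb{Z}[[q]]$ while the cusp-order inequality, which depends only on $a$ and not on $j$, continues to guarantee holomorphy.
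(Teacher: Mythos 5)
You should first be aware that the paper contains no proof of this statement: Theorem \ref{thm:cotron} is quoted verbatim from \cite[Theorem 1.1]{cotronetal} and used as a black box to deduce Theorem \ref{Lacunary modulo 2^k}, so there is no in-paper argument to compare yours against. The fair comparison is with the cited source and with the paper's own proofs of Theorems \ref{Lacunary modulo 3^k} and \ref{Lacunary modulo p_i^k}, which run on exactly the strategy you propose: use the binomial congruence $f_m^{p^k}\equiv f_{mp}^{p^{k-1}}\pmod{p^k}$ to replace the weakly holomorphic eta-quotient, modulo the prime power, by one that is holomorphic at every cusp; check holomorphy with the cusp-order formula of Theorem \ref{thm_ono1}; then invoke Serre's density theorem (Theorem \ref{thm_ono2}). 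Your cusp computation is the heart of the matter and it is correct: replacing $\eta(\delta_i\tau)^{r_i}$ by $\eta((\delta_i/p^a)\tau)^{p^a r_i}$ scales that term's contribution by $p^{2a}\gcd(d,\delta_i/p^a)^2/\gcd(d,\delta_i)^2\ge 1$, and your crude bounds give a cusp order bounded below by a positive multiple of $p^{2a}\sum_i r_i/\delta_i-\sum_i\gamma_i s_i$, which is exactly where the square root in the hypothesis comes from.

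Two points need repair, both fixable. First, the passage from $j=1$ to general $j$, which you leave open, should be done multiplicatively rather than by replacement: since $\eta(\delta_i'\tau)^{p^a}/\eta(\delta_i\tau)=1+ph_i$ with $h_i$ integral, one has
\begin{equation*}
\left(\frac{\eta(\delta_i'\tau)^{p^a}}{\eta(\delta_i\tau)}\right)^{r_ip^{j-1}}\equiv 1\pmod{p^j},
\end{equation*}
using $v_p\!\left(\binom{p^{j-1}}{\ell}\right)=j-1-v_p(\ell)$ and $\ell-v_p(\ell)\ge1$; hence $G_j:=G\cdot\prod_i\left(\eta(\delta_i'\tau)^{p^a}/\eta(\delta_i\tau)\right)^{r_ip^{j-1}}$ is an eta-quotient congruent to $G$ modulo $p^j$. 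In $G_j$ the factor $\eta(\delta_i\tau)$ survives with \emph{negative} exponent $r_i(1-p^{j-1})$, so the crude bound $\gcd(d,\delta_i/p^a)\ge1$ no longer suffices for those terms; you must use your finer inequality $\gcd(d,\delta_i)^2\le p^{2a}\gcd(d,\delta_i/p^a)^2$ to combine, for each $i$, the two contributions into at least $p^{2a}r_i\gcd(d,\delta_i/p^a)^2/\delta_i$, after which the same $j$-independent lower bound follows. Second, your parenthetical parity fix via $E_{p-1}^{p^{j-1}}$ fails for $p=2$ (there is no weight-one level-one Eisenstein series), and $p=2$ is the only case where a parity problem actually occurs, since for odd $p$ the weight changes by the integer $\tfrac12 p^{j-1}(p^a-1)\sum_i r_i$. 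The clean fix is to note that lacunarity modulo $p^{j+1}$ implies lacunarity modulo $p^j$, so one may always run the construction with $j\ge2$, where the correction factor has integer weight. With these repairs, plus the routine level and character bookkeeping (rescaling $\tau\mapsto24\tau$ as in the paper's Section \ref{sec:density} so that the hypotheses of Theorem \ref{thm_ono1} hold), your outline becomes a correct reconstruction of the cited proof.
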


\begin{proof}[Proof of Theorem \ref{Lacunary modulo 2^k}]
We recall
\begin{align*}
    \sum_{n\geq 0}\bar a_c(n)q^n=\frac{f_4^{c-1}}{f_1^2f_2^{2c-3}}=\frac{\eta^{c-1}(4z)}{\eta^2(z)\eta^{2c-3}(2z)}.
\end{align*}
Following the notations used in \eqref{G(tau)} and the paragraph succeeding it, we have \[\delta_1=4, \quad r_1=c-1, \quad \gamma_1=1, \quad \gamma_2=2,\quad s_1=2, \quad \text{and}\quad s_2=2c-3.\] Also $\mathcal{D_G}=4$ and the weight is $-c/2 \in \mathbb{Z}$ iff $c=2^\alpha m$ with $\alpha\geq 1$ and $\gcd (2,m)=1$. Next, we see that $2^2|4$ and
\begin{align*}
    2^2\geq\sqrt{\frac{2+(4c-6)}{\dfrac{c-1}{4}}}=\sqrt{16}=4.
\end{align*}
Choosing $p=2$ and $a=2$ in Theorem \ref{thm:cotron}, we complete the proof.
\end{proof}

The proof of Theorem \ref{Lacunary modulo 3^k} do not follow from Theorem \ref{thm:cotron}. We present it below.

\begin{proof}[Proof of Theorem \ref{Lacunary modulo 3^k}]
Putting $c=2^\alpha m$ in \eqref{genfn_bar ac}, we have 
\begin{align} \label{gf bar a_2^alpha m}
 \sum_{n\geq 0}\bar a_{2^\alpha m}(n)q^n=\frac{f_4^{2^\alpha m-1}}{f_1^2f_2^{2^{\alpha+1}m-3}}   .
\end{align}
We define $$A_{\alpha, m}(z) := \frac{\eta^3 (24z)}{\eta(72z)}.$$
By the binomial theorem, for any positive integers $m$, $k$, and prime $p$ we have $$f_m^{p^k} \equiv f_{mp}^{p^{k-1}} \pmod {p^k}.$$ 
Therefore, 
\begin{align} \label{1 mod 3^(k+1)}
A^{3^k}_{\alpha, m}(z) := \frac{\eta^{3^{k+1}} (24z)}{\eta^{3^k}(72z)} \equiv 1 \pmod {3^{k+1}}.
\end{align}
Next we define, 
\begin{align*}
B_{\alpha, m, k}(z) &= \frac{\eta^{2^\alpha m -1} (96z)}{\eta^2 (24z) \eta^{2^{\alpha+1}m-3}(48z)} A^{3^k}_{\alpha, m}(z)
= \frac{\eta^{3^{k+1}-2}(24z) \eta^{2^\alpha m -1} (96z)}{\eta^{2^{\alpha +1}m-3} (48z) \eta^{3^k}(72z)} .
\end{align*}
From \eqref{gf bar a_2^alpha m} and \eqref{1 mod 3^(k+1)}, we have 
\begin{align}
B_{\alpha, m, k}(z) &= \frac{\eta^{2^\alpha m -1} (96z)}{\eta^2 (24z) \eta^{2^{\alpha+1}m-3}(48z)} = \frac{f_{96}^{2^\alpha m-1}}{f_{24}^2f_{48}^{2^{\alpha+1}m-3}} 
\equiv \sum_{n=0}^{\infty} \bar a_{2^\alpha m}(n) q^{24n} \pmod {3^{k+1}} \label{mod 3^(k+1)}.
\end{align} 

Now, we will prove that $B_{\alpha, m, k}(z)$ is a modular form. From Theorem \ref{thm_ono1}, we find that the level of $B_{\alpha, m, k}(z)$ is $N=96M$, where $M$ is the smallest positive integer such that 
$$96M \left(\frac{3^{k+1}-2}{24}+\frac{3-2^{\alpha+1}m}{48}-\frac{3^k}{72}+\frac{2^\alpha m -1}{96}\right) \equiv 0 \pmod {24}$$
which implies 
$$3M \cdot \left(4\left(3^k-3^{k-2}\right)-2^\alpha m - 1\right) \equiv 0 \pmod{24}.$$
Therefore, $M=2^3$ and $N=2^8 3$.  The cusps of $\Gamma_0(2^8 3)$ are given by fractions $c/d$ where $d|2^8 3$ and $\gcd(c,d)=1$. By Theorem \ref{thm_ono1}, $B_{\alpha, m, k}(z)$ is holomorphic at a cusp $c/d$ if and only if 
\begin{multline*}
   \left(3^{k+1}-2\right)\frac{\gcd(d,24)^2}{24}-\left(2^{\alpha+1}m-3\right)\frac{\gcd(d,48)^2}{48}-3^k \frac{\gcd(d,72)^2}{72}\\+\left(2^{\alpha}m-1\right)\frac{\gcd(d,96)^2}{96} \geq 0. 
\end{multline*}
 Equivalently, $B_{\alpha, m, k}(z)$ is holomorphic at a cusp $c/d$ if and only if the following inequalities are satisfied for the different values of $d$ as mentioned in the table below (note that we have simplified the inequalities as far as possible).
 \begin{align*}
    \begin{tabular}{c | c}
        \toprule
        Value(s) of $d$ & Inequality \\  
        \midrule
       $1, 2, 4, 8, 24$ & $32\cdot 3^k-9\cdot 2^\alpha m-9 \geq 0$ \\
        \midrule
        $3, 6, 12$ & $3^k-\dfrac{9}{32}(2^\alpha m+1)\geq 0$ \\
        \midrule
    $16, 48$ & $8\cdot 3^k-9\cdot 2^\alpha m+9\geq 0$ \\
        \midrule
    $32, 64, 128, 256$ & $3^{k-2}\geq 0$ \\
        \midrule
    $96, 192, 384, 768$ & $3^k\geq 0$ \\
        \bottomrule
    \end{tabular}
\end{align*}

Since $k\geq 2$, the last two inequalities in the above list are already satisfied. Also we note that if the third inequality is satisfied than the first two are automatically satisfied. So, we only need to verify if
\[
8\cdot 3^k-9\cdot 2^\alpha m+9\geq 0.
\]
But this is true because we have $3^{k-2}\geq 2^{\alpha -3}m$ in our hypothesis. Therefore, $B_{\alpha, m, k}(z)$ is holomorphic at a cusp $c/d$. The weight of $B_{\alpha, m, k}(z)$ is $\ell = 3^k - 2^{\alpha -1}m$, which is a positive integer, and the associated character is given by 
$$\chi(\bullet)=\left(\frac{(-1)^\ell 2^{2 \cdot 3^{k+1}-3 \cdot 2^\alpha m + 1} 3^{2 \cdot 3^k-2^\alpha m}}{\bullet}\right).$$
Thus, $B_{\alpha, m, k}(z) \in M_{3^k - 2^{\alpha -1}m}\left(\Gamma_0(2^8 3), \chi(\bullet)\right)$. 

Therefore by Theorem \ref{thm_ono2}, the Fourier coefficients of $B_{\alpha, m, k}(z)$ are almost divisible by $3^k$. Due to \eqref{mod 3^(k+1)}, this holds for $\bar a_{2^\alpha m}$ also. This completes the proof. 
\end{proof}

We recall the following theorem of Ono and Taguchi \cite{Ono-Taguchi2005} before proving Theorem \ref{last thm}.
\begin{theorem}\label{thm_Ono-Taguchi} \cite[Theorem 1.3 (1)]{Ono-Taguchi2005}  Let $n$ be a non-negative integer and $k$ be a positive integer. Suppose that $\chi$ is a Dirichlet character with conductor $2^3 \cdot N$, where $N = 1, 3, 5, 15, 17$. Then there is an integer $c \geq 0$ such that for every $f(z) \in M_k (\Gamma_0(2^a N), \chi) \cap \mathbb{Z}[[q]]$ and every $t \geq 1$, 
$$f(z)|T_{p_1}|T_{p_2}|\cdots T_{p_{c+t}}\equiv 0 \pmod {3^t}$$
whenever $p_1, p_2, \dots , p_{c+t}$ are odd primes not dividing $N$.
\end{theorem}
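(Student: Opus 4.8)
The plan is to realize the series $\sum_{n\ge 0}\bar a_{2^\alpha m}(n)q^{24n}$ as the mod-$3$ reduction of a genuine holomorphic integral modular form lying in a space to which Theorem \ref{thm_Ono-Taguchi} applies, and then to read the asserted congruences off from its Hecke-nilpotency conclusion. Substituting $q\mapsto q^{24}$ in \eqref{genfn_bar ac} with color parameter $2^\alpha m$, the series
\[
S(z):=\sum_{n\ge 0}\bar a_{2^\alpha m}(n)q^{24n}=\frac{\eta(96z)^{2^\alpha m-1}}{\eta(24z)^2\,\eta(48z)^{2^{\alpha+1}m-3}}
\]
is an eta-quotient of weight $-2^{\alpha-1}m<0$, hence not holomorphic. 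Exactly as in the proof of Theorem \ref{Lacunary modulo 3^k}, I would correct this by multiplying by a power of $A(z):=\eta(24z)^3/\eta(72z)$: since $A(z)^{3^k}\equiv 1\pmod{3^{k+1}}$, the eta-quotient $B_k:=S\cdot A^{3^k}$ satisfies $B_k\equiv S\pmod{3^{k+1}}$ and has positive integer weight $3^k-2^{\alpha-1}m$. Re-using the cusp computation already performed for $B_{\alpha,m,k}(z)$ (and choosing $k$ large enough that the holomorphy inequalities there hold), this places $B_k\in M_{3^k-2^{\alpha-1}m}(\Gamma_0(2^8\cdot 3),\chi)\cap\mathbb{Z}[[q]]$.

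The key preparatory step is to identify $\chi$ so that Theorem \ref{thm_Ono-Taguchi} is applicable. By Theorem \ref{thm_ono1} the Nebentypus is the Kronecker symbol $\chi(\bullet)=\left(\frac{(-1)^\ell\,2^{a}\,3^{b}}{\bullet}\right)$, and a short computation of the exponents shows that $a$ and $b$ are both odd while $\ell$ has a fixed parity; hence $\chi=\left(\frac{\pm 6}{\bullet}\right)$ has conductor $2^3\cdot 3$ independently of $k$. Thus the admissible value is $N=3$ (which lies in the list $\{1,3,5,15,17\}$), the level is $\Gamma_0(2^8N)$, and the hypotheses of Theorem \ref{thm_Ono-Taguchi} hold. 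That theorem then furnishes an integer $i\ge 0$ such that
\[
B_k\,|\,T_{q_1}\,|\,\cdots\,|\,T_{q_{i+j}}\equiv 0\pmod{3^{j}}
\]
for every $j\ge 1$ and all odd primes $q_1,\dots,q_{i+j}$ not dividing $N$.

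I would then transfer this to $\bar a_{2^\alpha m}$. Each $T_{q_\ell}$ acts $\mathbb{Z}$-linearly on $q$-expansions through the integer multiplier $\chi(q_\ell)q_\ell^{\kappa-1}$, where $\kappa=3^k-2^{\alpha-1}m$ is the weight; hence $B_k\equiv S\pmod{3^{k+1}}$ yields $S\,|\,T_{q_1}\,|\,\cdots\,|\,T_{q_{i+j}}\equiv 0\pmod{3^{\min(j,\,k+1)}}$. Because $S$ is supported on multiples of $24$ and each $q_\ell$ is coprime to $24$, the image remains supported on multiples of $24$, and for $24\mid n$ with $n/24$ coprime to $q_1\cdots q_{i+j}$ the coefficient of $q^{n}$ in $S\,|\,T_{q_1}\,|\,\cdots\,|\,T_{q_{i+j}}$ is precisely $\bar a_{2^\alpha m}\!\left(\frac{q_1\cdots q_{i+j}\,n}{24}\right)$, all other contributions dropping out by coprimality. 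Taking $k\ge j-1$ then delivers the stated congruence modulo $3^{j}$.

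The main obstacle is to secure one integer $i$ that is uniform in $j$: a priori the Ono--Taguchi constant attached to $B_k$ depends on the weight $3^k-2^{\alpha-1}m$, and matching the target precision $3^{j}$ forces $k$ to grow with $j$. The resolution I would pursue exploits that $B_k\bmod 3$ equals the single series $S\bmod 3$ for every $k$; consequently the number of Hecke operators needed to annihilate it modulo $3$ is one fixed quantity $i$, and the gain of one further power of $3$ per additional Hecke operator---the substance of Theorem \ref{thm_Ono-Taguchi}---upgrades this to modulus $3^{j}$ with the same $i$. Making this uniformity rigorous across the growing weights, together with the cusp and conductor computations above, is the delicate part; by contrast the reduction $B_k\equiv S$ and the coefficient extraction are routine once these are in place.
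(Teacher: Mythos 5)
There is a fundamental mismatch here: the statement you were asked to prove is Theorem \ref{thm_Ono-Taguchi} itself, i.e.\ the Ono--Taguchi nilpotency theorem, which asserts that the Hecke algebra acts locally nilpotently (modulo powers of $3$) on integral modular forms in $M_k(\Gamma_0(2^a N),\chi)$ for $N\in\{1,3,5,15,17\}$. This is an external result that the paper cites from \cite[Theorem 1.3 (1)]{Ono-Taguchi2005} and does not prove; its proof lies in an entirely different circle of ideas --- the structure of the mod-$3$ Hecke algebra and associated Galois representations, generalizing the observation of Serre and the proof of Tate for level $1$ modulo $2$ --- none of which appears in your argument. Your proposal never engages with this. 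Instead, you construct the eta-quotient $B_k$ congruent to $\sum_{n\ge 0}\bar a_{2^\alpha m}(n)q^{24n}$ modulo $3^{k+1}$, place it in $M_{3^k-2^{\alpha-1}m}(\Gamma_0(2^8\cdot 3),\chi)$, and then \emph{invoke} Theorem \ref{thm_Ono-Taguchi} to obtain the Hecke annihilation. In other words, you assume the very statement you were supposed to establish and use it as a black box; as a proof of that statement the argument is circular and establishes nothing.

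What you have actually written is, in substance, the paper's proof of Theorem \ref{last thm} (the application of Ono--Taguchi to $\bar a_{2^\alpha m}$), carried out in Section \ref{sec:density}: the same $B_{\alpha,m,k}$, the same level and character computation, the same transfer of the Hecke congruence to coefficients supported on multiples of $24$ via coprimality. Indeed, on that front your write-up is in one respect more careful than the paper, since you notice that the congruence $B_k\equiv S\pmod{3^{k+1}}$ caps the attainable modulus at $3^{\min(j,k+1)}$ and that the Ono--Taguchi constant $i$ must be argued uniform as $k$ grows --- a point the paper passes over silently. But this does not repair the core defect: a correct proof of Theorem \ref{thm_Ono-Taguchi} would have to prove the local nilpotency of the Hecke operators $T_p$ on the relevant spaces of forms modulo $3$, and no step of your proposal addresses that.
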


\begin{proof}[Proof of Theorem \ref{last thm}]
From \eqref{mod 3^(k+1)}, we have 
$$B_{\alpha, m, k}\equiv \sum_{n=0}^{\infty} \bar a_{2^\alpha m}(n) q^{24n} \pmod {3^{k+1}}$$ 
which yields 
\begin{equation} \label{x}
B_{\alpha, m, k} := \sum_{n=0}^{\infty} \mathcal{B}_{\alpha, m, k} (n) q^n \equiv \sum_{n=0}^{\infty} \bar a_{2^\alpha m}\left(\frac{n}{24}\right) q^{n} \pmod {3^{k+1}}.
\end{equation}

Now, $B_{\alpha, m, k}(z) \in M_{3^k - 2^{\alpha -1}m}\left(\Gamma_0(2^8 3), \chi(\bullet)\right)$, where $\chi(\bullet)$ is the associated character. From Theorem \ref{thm_Ono-Taguchi}, we get that there is an integer $i\geq0$ such that for any $j\geq1$, 
$$B_{\alpha, m, k} (z)|T_{q_1}|T_{q_2}|\cdots T_{q_{i+j}}\equiv 0 \pmod {3^j}$$
whenever $q_1, q_2, \dots, q_{i+j}$ are odd primes not dividing $N$. From the definition of Hecke operators, we have that if $q_1, q_2, \dots, q_{i+j}$ are distinct primes and if $n$ is coprime to $q_1 \cdots q_{i+j}$, then 
\begin{equation} \label{y}
    \mathcal{B}_{\alpha, m, k} (q_1 \cdots q_{i+j} \cdot n) \equiv 0 \pmod{3^j}
\end{equation}
Combining \eqref{x} and \eqref{y}, we complete the proof. 
\end{proof}

\section{Concluding Remarks}\label{sec:conc}

\begin{enumerate}
    \item We have the following conjecture.
    \begin{conjecture}
For all $n\geq 0$, we have
    \begin{align}
            a_{25}(31^2n+644)&\equiv 0 \pmod{31^2},\label{25 thm2}\\
    a_{41}(47^2n+256)&\equiv 0 \pmod{47^2},\label{41 thm2}\\
    a_{61}(67^2n+555)&\equiv 0 \pmod{67^2}.\label{61 thm2}
    \end{align}
\end{conjecture}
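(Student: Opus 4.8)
The plan is to attack all three congruences \eqref{25 thm2}, \eqref{41 thm2}, and \eqref{61 thm2} in the same way as Theorem \ref{THM2}, namely through Radu's algorithm, after first converting \eqref{gf:ac} into an eta-quotient modulo the relevant prime square. The basic tool is the congruence $f_1^{p^2}\equiv f_p^{p}\pmod{p^2}$, which follows from two applications of the binomial theorem and which yields $\frac{1}{f_1}\equiv \frac{f_1^{p^2-1}}{f_p^{p}}\pmod{p^2}$. Noting that in each case $c=p-6$ (so $p=31,47,67$ for $c=25,41,61$) and substituting into \eqref{gf:ac}, I obtain
\begin{align*}
\sum_{n\geq 0}a_{25}(n)q^n&\equiv \frac{f_1^{960}}{f_2^{24}f_{31}^{31}}\pmod{31^2},\\
\sum_{n\geq 0}a_{41}(n)q^n&\equiv \frac{f_1^{2208}}{f_2^{40}f_{47}^{47}}\pmod{47^2},\\
\sum_{n\geq 0}a_{61}(n)q^n&\equiv \frac{f_1^{4488}}{f_2^{60}f_{67}^{67}}\pmod{67^2}.
\end{align*}
Each right-hand side is the eta-quotient associated with the sequence $(r_\delta)=(p^2-1,\,-(c-1),\,-p,\,0)$ indexed by the divisors $1<2<p<2p$ of $M=2p$.

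Next, following the proof of Theorem \ref{THM2}, I would run Radu's machinery on the tuples $(m,M,N,t,(r_\delta))$ given by $(31^2,62,62,644,(960,-24,-31,0))$, by $(47^2,94,94,256,(2208,-40,-47,0))$, and by $(67^2,134,134,555,(4488,-60,-67,0))$. For each tuple I would verify the six $\Delta^{*}$ conditions, compute $P(t)$ from \eqref{formulae for P(t)} (expecting a singleton, exactly as in Theorem \ref{THM2}), invoke Lemma \ref{lemma (Radu-Sellers2011)} to obtain the double-coset representatives $\gamma_\delta$, and then select $(r'_\delta)\in R(N)$ with $r'_1>0$ so that $p(\gamma_\delta)+p'(\gamma_\delta)\geq 0$ for every $\delta\mid N$. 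By Lemma \ref{lemma (Radu2009)} this reduces each claim to the finite verification of $a_c(p^2n+t')\equiv 0\pmod{p^2}$ for all $t'\in P(t)$ and $0\leq n\leq \lfloor\nu\rfloor$.

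The main obstacle, and the reason these statements remain conjectural, is the size of $\lfloor\nu\rfloor$. Since $\nu$ is essentially $\frac{1}{24}\big(\sum_{\delta\mid M}r_\delta+\sum_{\delta\mid N}r'_\delta\big)[\Gamma:\Gamma_0(N)]$ and here $\sum_{\delta\mid M}r_\delta=p^2-2p+6$ equals $905$, $2121$, and $4361$ in the three cases, the required coefficients of $a_c$ run into the millions already for $p=31$ and far beyond reach for $p=47,67$. A computationally lighter route would be to emulate Theorem \ref{THM1} instead: arrange an auxiliary eta-quotient $G(z)$ of integer weight $\ell\geq 3$ with $G(z)\equiv q^{s}\,\frac{1}{f_1f_2^{c-1}}\,h(z)\pmod{p^2}$ for some $h\in\mathbb{Z}[[q^{p}]]$, obtainable by padding with a further factor $f_1^{p^2}\equiv f_p^{p}$ to repair the parity of the weight and the divisibility conditions of Theorem \ref{thm_ono1}, and then to apply the Hecke operator $T_p$. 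Because $\ell\geq 3$, the term $\chi(p)p^{\ell-1}a(n/p)$ in the definition of $T_p$ is divisible by $p^2$, so an analogue of Theorem \ref{theorem_hecke} modulo $p^2$ holds and $G(z)\mid T_p$ stays a modular form in the same space; the congruence would then follow from checking $G(z)\mid T_p\equiv 0\pmod{p^2}$ up to the Sturm bound of Theorem \ref{Sturm}, a range on the order of $p$ times a Sturm bound rather than $m\lfloor\nu\rfloor$. Establishing the requisite structural vanishing, rather than verifying a forbiddingly long list of coefficients, is the step I expect to be genuinely hard, and is what currently keeps these congruences out of reach.
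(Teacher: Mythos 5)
This statement is left as an open conjecture in the paper, and your proposal reproduces essentially the reduction the paper itself sketches in its concluding remarks: the same congruence $f_1^{p^2}\equiv f_p^{p}\pmod{p^2}$ turning \eqref{gf:ac} into an eta-quotient, the same Radu tuples $(961,62,62,644,(960,-24,-31,0))$, $(2209,94,94,256,(2208,-40,-47,0))$, $(4489,134,134,555,(4488,-60,-67,0))$ with singleton $P(t)$, and the same conclusion that the finite verification up to $\lfloor\nu\rfloor$ (which the paper computes as $3813$, $13208$, and $38091$, forcing coefficient checks out to roughly $m\lfloor\nu\rfloor$, i.e.\ millions of terms) is the computational barrier. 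Like the paper, you have therefore not proved the congruences, but you have correctly identified both the intended method and the precise obstruction that keeps them conjectural; your supplementary Hecke-operator idea modulo $p^2$ is a reasonable speculation but, as you acknowledge, its key vanishing step remains unestablished.
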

If we use a procedure similar to the proof of Theorem \ref{THM2}, it would yield us the following data.
\begin{align*}
\begin{tabular}{c c c c}
 \toprule
 Congruences  & $(m,M,N,t,(r_\delta))$ and $(r_\delta^\prime)$ & $P(t)$  &  $\lfloor \nu\rfloor $ \\  \midrule
\eqref{25 thm2} & $(961,62,62,644,(960,-24,-31,0))$ & $\{644\}$ &  $3813$\\& and $(49,0,0,0)$ &   &  \\
 \midrule
 \eqref{41 thm2} & $(2209,94,94,256,(2208,-40,-47,0))$ & $\{256\}$ &  $13208$\\& and $(81,0,0,0)$ &   &  \\
  \midrule
 \eqref{61 thm2} & $(4489,134,134,555,(4488,-60,-67,0))$ & $\{555\}$ &  $38091$ \\& and $(121,0,0,0)$ &   &  \\
\bottomrule
\end{tabular}
\end{align*}
Since the bound $\nu$ is large for the computation to take a significant amount of time in a modest computer, we leave the above as an open problem.
\item We also have the following conjecture, which cannot be proved similarly like the proof of Theorem \ref{THM2}.
\begin{conjecture}
    For all $n\geq 0$, we have
    \begin{align*}
    a_{25}(31^2n+r)&\equiv 0 \pmod{31^2} \quad r\in\{41, 81, 585\},\\
    a_{37}(43^2n+r)&\equiv 0 \pmod{43^2} \quad r\in\{716, 987\},\\
    a_{53}(59^2n+100)&\equiv 0 \pmod{59^2},\label{t53}\\
    a_{65}(71^2n+r)&\equiv 0 \pmod{71^2} \quad r\in \{41, 47\},\\
    a_{77}(83^2n+157)&\equiv 0 \pmod{83^2}.
    \end{align*}
\end{conjecture}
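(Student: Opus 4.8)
The plan is to adapt the modular-forms argument behind Theorem \ref{THM1} to the modulus $p^2$, where throughout $p=c+6$ runs over $\{31,43,59,71,83\}$. The essential new feature is that the target progressions $p^2 n+r$ have modulus $p^2$ rather than $p$, so a single Hecke operator no longer suffices; instead I would extract the relevant coefficients by applying $T_p$ twice. The observation that makes this viable is that if $G(z)=\sum b(n)q^n\in M_\ell(\Gamma_0(N))$ with $\ell\geq 3$ and $p\nmid N$, then in \eqref{Tp} the term $\chi(p)p^{\ell-1}b(n/p)$ is divisible by $p^2$, so $G\mid T_p\equiv\sum b(pn)q^n\pmod{p^2}$; that is, modulo $p^2$ the Hecke operator $T_p$ agrees with the coefficient-extraction operator $U_p$. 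Applying this twice gives $G\mid T_p\mid T_p\equiv\sum b(p^2n)q^n\pmod{p^2}$, while $G\mid T_p\mid T_p$ remains a genuine holomorphic form in $M_\ell(\Gamma_0(N))$, to which Sturm's bound (Theorem \ref{Sturm}) applies.

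To realize the coefficients $a_c(p^2n+r)$ this way, I would, for each pair $(c,p)$, build a holomorphic eta-quotient of the shape $\eta(z)^{Mp^3-1}\eta(2z)^{-(c-1)}$ (adjoining $\eta(4z)$ or further $\eta(\delta z)$ factors if the $24$-divisibility conditions of Theorem \ref{thm_ono1} demand it) whose $q$-expansion satisfies
\[
G(z)\equiv q^{s}\,f_1^{Mp^3}\cdot\Bigl(\sum_{n\geq0}a_c(n)q^n\Bigr)\pmod{p^2}
\]
for a shift $s\equiv-r\pmod{p^2}$, using \eqref{gf:ac}. The multiplier is taken to be $f_1^{Mp^3}$ because of the iterated Frobenius congruences $f_1^{p^2}\equiv f_p^{p}\pmod{p^2}$, hence $f_1^{p^3}\equiv f_{p^2}^{p}\pmod{p^2}$, so that $f_1^{Mp^3}\equiv f_{p^2}^{Mp}\pmod{p^2}$ lies in $\mathbb{Z}[[q^{p^2}]]$. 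Since $U_{p^2}\bigl(A(q)B(q^{p^2})\bigr)=U_{p^2}(A)\cdot B(q)$ and $U_{p^2}=U_p\circ U_p$, two applications of $T_p$ give, modulo $p^2$,
\[
G\mid T_p\mid T_p\equiv\Bigl(\sum_{n\geq0}a_c(p^2n+r)\,q^{n+s'}\Bigr)f_1^{Mp}\pmod{p^2},
\]
for an explicit small shift $s'$, in the spirit of Theorem \ref{theorem_hecke} but carried one power higher. The integer $M$ must then be fixed so that $G$ satisfies the two divisibility-by-$24$ conditions and so that the order-of-vanishing formula in Theorem \ref{thm_ono1} is non-negative at every cusp $c/d$ of $\Gamma_0(N)$. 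Having $G\mid T_p\mid T_p$ as a holomorphic modular form, the congruence $a_c(p^2n+r)\equiv0\pmod{p^2}$ would follow exactly as in Theorem \ref{THM1}: verify that its $q$-expansion vanishes modulo $p^2$ up to the Sturm bound $\tfrac{\ell N}{12}\prod_{\ell'\mid N}(1+\tfrac1{\ell'})$ and invoke Theorem \ref{Sturm}. For the grouped residues in the statement (for example $r\in\{41,81,585\}$ for $a_{25}$), one either treats each $r$ through its own shift $s$, or chooses $M$ so that a single form captures the whole orbit of residues under the square classes $\mathbb{S}_{24m}$ appearing in \eqref{formulae for P(t)}.

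The main obstacle is twofold, and it is precisely what separates this statement from Theorem \ref{THM2}. First, holomorphy at the cusps is delicate: in Radu's framework the condition $p(\gamma)+p'(\gamma)\geq0$ fails for these residue classes, which is exactly why Lemma \ref{lemma (Radu2009)} cannot be applied, and forcing every cusp order in Theorem \ref{thm_ono1} to be non-negative generally requires enlarging $M$ and the level $N$. Second, and more seriously, the multiplier $f_1^{Mp^3}$ forces the weight $\ell$, being essentially $\tfrac12 Mp^3$, to be of size $\sim p^3$, so the Sturm bound grows like $p^3N$; for $p$ as large as $83$ the finite verification, though effective, is astronomically large and infeasible on a modest computer — the same computational wall the authors already flag for the easier first conjecture. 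A genuinely different idea would be needed to bypass this: for instance, identifying the mod-$p$ reduction of $f_1^{-1}f_2^{-(c-1)}$, after a suitable twist, with a lacunary (CM) form whose $p$th Hecke eigenvalue is $\equiv0$, which could reduce the desired $p^2$-divisibility to a structural eigenvalue statement independent of the Sturm bound.
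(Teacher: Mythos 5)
This statement is not proved in the paper at all: it appears in the concluding remarks as an open conjecture, and the authors explicitly record that it \emph{cannot} be proved along the lines of Theorem \ref{THM2}. So there is no paper proof to compare against; the only question is whether your proposal establishes the congruences on its own, and it does not. The two preliminary observations you make are correct: for a form of weight $\ell\geq 3$ on $\Gamma_0(N)$ with $p\nmid N$, formula \eqref{Tp} gives $G\mid T_p\equiv U_pG\pmod{p^2}$ and hence $G\mid T_p\mid T_p\equiv U_{p^2}G\pmod{p^2}$, where $U_p$ denotes coefficient extraction; and the iterated Frobenius congruence $f_1^{p^3}\equiv f_{p^2}^{p}\pmod{p^2}$ would let a multiplier $f_1^{Mp^3}$ play the role that $f_1^{817}$ plays modulo $43$ in the proof of Theorem \ref{THM1}. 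But everything that would constitute the actual proof is left undone. You never exhibit the eta-quotient $G$ for even one of the five pairs $(c,p)$: the integer $M$, the level, and above all holomorphy at every cusp via the order formula of Theorem \ref{thm_ono1} are not verified, and you give no argument that an admissible $M$ exists --- indeed you identify cusp holomorphy as a genuine obstruction, the analogue of the failure of $p(\gamma)+p'(\gamma)\geq 0$ that blocks Lemma \ref{lemma (Radu2009)} here. Second, a Sturm-type argument (Theorem \ref{Sturm}) proves a congruence only once the finite coefficient check is actually carried out; with weight of order $Mp^3/2$ the bound is of order $p^3$, and applying $U_{p^2}$ requires expansions up to $p^2$ times that bound, which you yourself describe as infeasible. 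An ``effective but not performed'' verification proves nothing --- this is precisely the predicament the authors are already in for their \emph{first} conjecture, where the Radu data can be written down but the computation cannot be completed.

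Your closing sentence --- that ``a genuinely different idea would be needed,'' perhaps via a CM-form eigenvalue argument --- is an accurate self-assessment, but it is a single speculative remark with no construction behind it. The net effect is that your proposal reproduces the strategy one would naturally try, identifies correctly why it stalls, and stops there; the congruences remain exactly as conjectural after your write-up as they are in the paper.
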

\item It is desirable to have elementary proofs for Theorem \ref{THM2} and the above conjectures.
\item In this paper we have not explored moduli other than powers of $2$ in detail. It would be natural to explore what happens if we change the modulus to another prime. To this end, we make the following conjecture, whose proof should be similar to the proof of Theorem \ref{thm:3cong}.
\begin{conjecture}
    For all $n\geq 0$ and $i\geq 1$, we have
    \begin{align*}
        \bar a_{3i+2}(9n+2)&\equiv 0 \pmod {6},\\
    \bar a_{9i+5}(9n+3)&\equiv 0 \pmod{12},\\
\bar a_{9i+8}(9n+3)&\equiv 0 \pmod{12},\\
 \bar a_{3i+2}(9n+5)&\equiv 0 \pmod{6},\\
  \bar a_{3i+2}(9n+8)&\equiv 0 \pmod{6}.
    \end{align*}
\end{conjecture}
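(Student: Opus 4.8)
The plan is to prove each congruence through the Chinese Remainder Theorem, writing $6=2\cdot 3$ and $12 = 4\cdot 3$; the powers of $2$ come essentially for free from Theorem \ref{bar a_c (n) equiv 4}, and only the factor of $3$ needs work, which I would handle by an iterated $3$-dissection modelled on the proof of Theorem \ref{thm:3cong}. For the modulus $2$: every residue of $\bar a_c(n)$ modulo $4$ listed in Theorem \ref{bar a_c (n) equiv 4} (namely $0$, $2$, and $2(c+1)$) is even, so $\bar a_c(n)\equiv 0\pmod 2$ for all $n\geq 1$, which is the ``$\pmod 2$'' half of the three congruences modulo $6$. For the modulus $4$: since $9n+3 = 3(3n+1)$ has $3$-adic valuation exactly $1$, it is neither a perfect square nor twice a perfect square (both of which have even $3$-adic valuation), so the ``otherwise'' case of Theorem \ref{bar a_c (n) equiv 4} gives $\bar a_c(9n+3)\equiv 0\pmod 4$ for every $c$, which is the ``$\pmod 4$'' half of the two congruences modulo $12$.

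The factor of $3$ rests on the fact that all subscripts in the conjecture satisfy $c\equiv 2\pmod 3$. Using $f_k^3\equiv f_{3k}\pmod 3$ together with $\tfrac{1}{f_1^2}\equiv \tfrac{f_1}{f_3}\pmod 3$, I would reduce \eqref{genfn_bar ac} to the shape
\[
\sum_{n\geq 0}\bar a_c(n)q^n \equiv \Phi_c(q^3)\,\frac{f_1 f_4}{f_2}\pmod 3,
\]
where $\Phi_c$ is an explicit eta-quotient, so that $\Phi_c(q^3)$ is a power series in $q^3$. Since the $3$-dissection \eqref{3d f1f4/f2} of $f_1 f_4/f_2$ has no $q^{3n+2}$ term, the product above has no $q^{3n+2}$ term either; hence $\bar a_{3i+2}(3n+2)\equiv 0\pmod 3$ for all $n\geq 0$, and this single statement covers the three progressions $9n+2$, $9n+5$, $9n+8$, all of which are $\equiv 2\pmod 3$.

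For the progression $9n+3\equiv 0\pmod 3$, with $c=9i+5$ or $c=9i+8$, I would extract the $q^{3n}$ part of the displayed congruence using the first summand of \eqref{3d f1f4/f2}, replace $q^3$ by $q$, and reduce modulo $3$ once more. For $c=9i+8$ the resulting series turns out to be itself a power series in $q^3$, so its $q^{3n+1}$ coefficients vanish and $\bar a_{9i+8}(9n+3)\equiv 0\pmod 3$. For $c=9i+5$ the surviving non-inert factor collapses to $f_4^2/f_2 = \psi(q^2)$; writing $\psi(q^2)=\sum_k \psi_k q^{2k}$, recovering the class $9n+3$ forces the index $k$ to satisfy $k\equiv 2\pmod 3$, which is precisely where $\psi$ has no term by \eqref{3d f2^2/f1}, so again the relevant coefficients vanish and $\bar a_{9i+5}(9n+3)\equiv 0\pmod 3$. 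Assembling the $2$- and $3$-power parts via the Chinese Remainder Theorem then yields all five congruences.

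The hard part will be the bookkeeping of the iterated $3$-dissection: one must keep track of exactly which residue class modulo $9$ each extraction produces and re-reduce the accumulated eta-quotient modulo $3$ at every stage, while the inert factors $\Phi_c(q^3)$ keep changing shape. The one genuinely non-routine step is recognizing that, for $c=9i+5$, the first extraction leaves $\psi(q^2)$ as the only non-inert factor, so that the absence of $q^{3k+2}$ terms in Ramanujan's $\psi$ is exactly what annihilates the $9n+3$ progression modulo $3$.
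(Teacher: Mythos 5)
First, a point of order: the paper does not prove this statement at all --- it appears in the concluding remarks as a conjecture, accompanied only by the remark that its proof ``should be similar to the proof of Theorem \ref{thm:3cong}.'' So there is no proof of record to compare against; what you have written is, as far as I can verify, a correct argument that would settle the conjecture. Your key claims check out. For $c=3j+2$ one indeed gets
\[
\sum_{n\geq 0}\bar a_c(n)q^n\equiv \frac{f_{12}^{j}}{f_3 f_6^{2j}}\cdot\frac{f_1f_4}{f_2}\pmod 3,
\]
and since \eqref{3d f1f4/f2} has no $q^{3n+2}$ component, $\bar a_{3i+2}(3n+2)\equiv 0\pmod 3$, which covers all three mod-$6$ progressions at once. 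For $c=9i+5$ (so $j=3i+1$), extracting the $q^{3n}$ terms, replacing $q^3$ by $q$, and reducing modulo $3$ again yields $\frac{f_4^{3i+2}f_6^5}{f_2^{6i+4}f_3^2f_{12}^2}$; writing $\frac{1}{f_2^{6i+4}}\equiv\frac{1}{f_2f_6^{2i+1}}\pmod 3$ exhibits this as an eta-quotient in $q^3$ times $\frac{f_4^2}{f_2}=\psi(q^2)$, and since the exponent condition $2k\equiv 1\pmod 3$ forces $k\equiv 2\pmod 3$, exactly where $\psi$ has no terms by \eqref{3d f2^2/f1}, the class $9n+3$ vanishes modulo $3$; for $c=9i+8$ the corresponding series is $\frac{f_{12}^{i+1}f_3f_{18}}{f_6^{2i}f_9f_{12}^2}$, inert in $q^3$, as you say. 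Your handling of the $2$-power part is where you genuinely depart from (and improve on) the strategy the authors hint at: in the proof of Theorem \ref{thm:3cong} the mod-$4$ divisibility is obtained by a lengthy cube-root-of-unity manipulation of $\varphi$-quotients followed by two $q^{3n}$-extractions, whereas you read off $\bar a_c(n)\equiv 0\pmod 2$ for all $n\geq 1$, and $\bar a_c(9n+3)\equiv 0\pmod 4$ (odd $3$-adic valuation rules out both $k^2$ and $2k^2$), directly from Theorem \ref{bar a_c (n) equiv 4}; the same shortcut, applied to $9n+6=3(3n+2)$, would in fact shorten the paper's own proof of Theorem \ref{thm:3cong}. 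The only caveat is that your write-up is a plan rather than a finished proof --- the two displayed reductions above must actually be carried out --- but every step you assert is correct, and the bookkeeping you worry about is modest: only two extractions are ever needed.
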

\noindent After this paper was posted on ArXiv, Ghoshal and Jana \cite{ghoshal2025combinatorialproofresultgeneralized} announced a proof of this conjecture.
\end{enumerate}

{\small \subsection*{Declarations} The authors make the following declaration.
\begin{enumerate}
\item The last two authors are supported by a Start-Up Grant from Ahmedabad University, India (Reference No. URBSASI24A5).
    \item The authors have no relevant financial or non-financial interests to disclose.
    \item There is no data associated with this manuscript.
    \item All authors contributed to the study conception and design. All authors read and approved the final manuscript.
\end{enumerate}}

\end{document}